\def\squiggly{\bgroup \markoverwith{\textcolor{black}{\lower3.5\p@\hbox{\sixly \char58}}}\ULon}
\newtheorem{theorem}[subsection]{Theorem}
\newtheorem{proposition}[subsection]{Proposition}
\newtheorem{lemma}[subsection]{Lemma}
\newtheorem{corollary}[subsection]{Corollary}
\newtheorem{definition}[subsection]{Definition}
\theoremstyle{remark}
\newtheorem{claim}[subsection]{Claim}
\newtheorem{example}[subsection]{Example}
\newtheorem{remark}[subsection]{Remark}
\def\fa{{\mathfrak{a}}}
\def\fg{{\mathfrak{g}}}
\def\fh{{\mathfrak{h}}}
\def\fn{{\mathfrak{n}}}
\def\fsl{{\mathfrak{sl}}}
\def\BC{{\mathbb{C}}}
\def\BZ{{\mathbb{Z}}}
\def\slaws{\text{standard Lyndon words}}
\def\aslaw{\text{affine standard Lyndon word}}
\def\aslaws{\text{affine standard Lyndon words}}
\def\SL{\mathrm{SL}}
\def\rtim{\mathrm{times}}
\def\rk{\mathrm{rank}}
\def\wI{\widehat{I}}
\def\wQ{\widehat{Q}}
\def\wDelta{\widehat{\Delta}}
\def\sb{\mathsf{b}}
\def\hgt{\text{ht}}
\newcommand\iso{\,\vphantom{j^{X^2}}\smash{\overset{\sim}{\vphantom{\rule{0pt}{0.20em}}\smash{\longrightarrow}}}\,}
\def\uup{U_q(\fn^+)}
\def\wt{\widetilde}
\begin{document}

\title[Affine Standard Lyndon words: A-type]
      {\Large{\textbf{Affine Standard Lyndon words: A-type}}}
	
\author[Yehor Avdieiev and Oleksandr Tsymbaliuk]{Yehor Avdieiev and Alexander Tsymbaliuk}

\address{Y.A.: University of Bonn, Department of Mathematics, Bonn, Germany}
\email{egor27avdeev@gmail.com}

\address{A.T.: Purdue University, Department of Mathematics, West Lafayette, IN, USA}
\email{sashikts@gmail.com}

\dedicatory{To the memory of Yulia Zdanovska}

\maketitle

\begin{abstract}
We generalize an algorithm of Leclerc~\cite{L} describing explicitly the bijection of Lalonde-Ram~\cite{LR}
from finite to affine Lie algebras. In type $A_n^{(1)}$, we compute all affine standard Lyndon words for any
order of the simple roots, and establish some properties of the induced orders on the positive affine roots.
\end{abstract}


\medskip

\section{Introduction}\label{sec:intro}


\subsection{Summary}\label{ssec:summary}
\

An interesting basis of the free Lie algebra generated by a finite family $\{e_i\}_{i\in I}$ was constructed
in the 1950s using the combinatorial notion of \emph{Lyndon} words (we recall these in
Definitions~\ref{def:lyndon}--\ref{def:lyndon-2}). A few decades later, this was generalized in~\cite{LR}
to any finitely generated Lie algebra $\fa$. Explicitly, if $\fa$ is generated by $\{e_i\}_{i\in I}$,
then any order on the finite alphabet $I$ gives rise to the combinatorial basis $\sb[\ell]$ as $\ell$ ranges through
all \emph{standard Lyndon} words (these will be recalled in Definition~\ref{def:standard}). Here, the standard
bracketing $\sb[\ell]$ is defined inductively with $\sb[i]=e_i$ (see Definition~\ref{def:bracketing}).

The key application of~\cite{LR} was to a simple finite-dimensional $\fg$, more precisely,
to its maximal nilpotent subalgebra $\fn^+$. According to the root space decomposition:
\begin{equation}\label{eqn:root vectors intro}
  \fn^+ = \bigoplus_{\alpha \in \Delta^+} \BC \cdot e_\alpha \,, \qquad
  \Delta^+=\Big\{\mathrm{positive\ roots}\Big\} .
\end{equation}
We note that the one-dimensional direct summands above are canonical as they are distinct eigenspaces for the adjoint
action of the Cartan subalgebra $\fh$ of $\fg$. However, picking a specific basis of \emph{root vectors} $\{e_\alpha\}_{\alpha\in \Delta^+}$
is non-canonical. Appealing to an additional grading by the root lattice of $\fg$,~\cite{LR} derived a natural bijection
\begin{equation}\label{eqn:1-to-1 intro}
  \ell \colon \Delta^+ \iso \Big\{\text{standard Lyndon words}\Big\}.
\end{equation}

A decade later, this bijection played a pivotal role in~\cite{L}, which studied the image of the dual canonical basis of $\uup$,
the positive half of a quantum group of $\fg$, under the embedding to the quantum shuffle algebra of~\cite{G,R1,S}. To this end,
\cite{L} obtained an explicit algorithm (see Proposition~\ref{prop:Leclerc algorithm}) for the above bijection~\eqref{eqn:1-to-1 intro}.
The key ingredient that allows for the quantum group generalization is the fact (attributed to~\cite{R2} in~\cite{L}) that
the order on $\Delta^+$ induced via~\eqref{eqn:1-to-1 intro} from a lexicographical order on words is \emph{convex}
in the sense of Definition~\ref{def:convex} (see Proposition~\ref{prop:fin.convex}).

$\ $

The motivation of the present note is to extend the above discussion to affine root systems. To this end,
we recall an enigmatic remark from the very end of~\cite{LR}:
  ``\emph{Preliminary computations seem to indicate that it will be very instructive to study root
          multiplicities for Kac-Moody Lie algebras by way of standard Lyndon words}''.

Let $\widehat{\fg}$ be the affinization of $\fg$, whose Dynkin diagram is obtained by extending the Dynkin diagram
of $\fg$ with one vertex $0$. Thus, on the combinatorial side, we consider the alphabet $\wI=I\sqcup \{0\}$.
The corresponding \emph{positive} subalgebra $\widehat{\fn}^+\subset \widehat{\fg}$ still admits the root space
decomposition $\widehat{\fn}^+=\bigoplus_{\alpha\in \wDelta^+} \widehat{\fn}^+_{\alpha}$, with
$\wDelta^+=\{\mathrm{positive\ affine\ roots}\}$. The key difference with~\eqref{eqn:root vectors intro}
is that not all $\widehat{\fn}^+_{\alpha}$ are one-dimensional:
\begin{equation}
  \dim \widehat{\fn}^+_{\alpha}=1 \quad \forall\, \alpha\in \wDelta^{+,\mathrm{re}} \,, \qquad
  \dim \widehat{\fn}^+_{\alpha}=|I| \quad \forall\, \alpha\in \wDelta^{+,\mathrm{im}}.
\end{equation}
Here, $\wDelta=\wDelta^{+,\mathrm{re}}\sqcup \wDelta^{+,\mathrm{im}}$ is the decomposition into real and imaginary
affine roots, with $\wDelta^{+,\mathrm{im}}=\{k\delta|k\geq 1\}$. It is therefore natural to consider an extended set
$\widehat{\Delta}^{+,\mathrm{ext}}$ of~\eqref{eq:extended-affine-roots}, counting imaginary roots with appropriate multiplicities.
Then, the degree reasoning similar to the one used in~\cite{LR} provides a natural analogue of~\eqref{eqn:1-to-1 intro}:
\begin{equation}\label{eqn:affine 1-to-1 intro}
  \SL \colon \wDelta^{+,\mathrm{ext}} \iso \Big\{ \text{affine standard Lyndon words} \Big\}.
\end{equation}
Our first result (Proposition~\ref{prop:generalized Leclerc}) is an inductive algorithm describing this bijection,
slightly generalizing Leclerc's algorithm describing~\eqref{eqn:1-to-1 intro}. As the first application, we use it
to find all $\aslaws$ for the simplest case of $\widehat{\fsl}_2$.

Our major technical result is the explicit description of all affine standard Lyndon words for $\widehat{\fsl}_{n+1}\ (n\geq 2)$.
To this end, we first straightforwardly treat the special order~\eqref{eq:stand-order} in Theorem~\ref{thm:sln-standard}.
We then derive a similar pattern for an arbitrary order in Theorem~\ref{thm:sln-general}.
The key feature is that all $\aslaws$ are determined by those of length $\leq n$.
Furthermore, we crucially use Rosso's convexity result for $\fsl_{n+1}$ to obtain an explicit description
of $n$ $\aslaws$ in degree $\delta$, which are key to establishing the general ``periodicity'' pattern.

The induced order~\eqref{eqn:affine-induces} on $\widehat{\Delta}^{+,\mathrm{ext}}$ is quite different
from the orders in the literature on affine quantum groups (\cite{B,KT}). While for $\widehat{\fsl}_2$
one gets a usual order~(\cite{D})
\begin{equation*}
  \alpha_1 < \alpha_1+\delta < \alpha_1 + 2\delta < \cdots < \cdots < 3\delta < 2\delta < \delta
  < \cdots < 2\delta+\alpha_0 < \delta+\alpha_0 < \alpha_0,
\end{equation*}
the imaginary roots are not placed consequently in other affine types. We use Theorem~\ref{thm:sln-general}
to establish two properties of this order for $\widehat{\fsl}_{n+1}$, see Propositions~\ref{prop:order-prop-1},~\ref{prop:order-prop-2}.


\subsection{Outline}\label{ssec:outline}
\

\noindent
The structure of the present paper is the following:
\begin{itemize}[leftmargin=0.5cm]

\medskip
\item[$\bullet$]
In Section~\ref{sec:SL-words basics}, we recall the notion of (standard) Lyndon words, their basic properties,
and the application to simple Lie algebras, following~\cite{LR} and~\cite{L}.

\medskip
\item[$\bullet$]
In Section~\ref{sec:affine-Leclerc}, we generalize Leclerc's algorithm of~\cite{L} from simple Lie algebras
to affine Lie algebras, and illustrate its application in the simplest case of $A^{(1)}_1$.

\medskip
\item[$\bullet$]
In Section~\ref{sec:A-type}, the heart of the paper, we compute $\aslaws$ for $A^{(1)}_n$ ($n\geq 2$) with any
order on the corresponding alphabet $\wI=\{0,1,\ldots,n\}$. The resulting set of $\aslaws$ is determined by
a finite subset of those of length $\leq n$ as well as manifests a compelling  periodicity~pattern.

\medskip
\item[$\bullet$]
In Section~\ref{sec:orders}, we use the explicit formulas for $\aslaws$ from Theorem~\ref{thm:sln-general}
to establish some properties of the order on $\widehat{\Delta}^{+,\mathrm{ext}}$,
induced via~\eqref{eqn:affine 1-to-1 intro} from the lexicographical order on the $\aslaws$.

\medskip
\item[$\bullet$]
In Appendix~\ref{sec:appendix}, we provide a link to the Python code and explain how it inductively computes
$\aslaws$ in all types and for any orders.

\end{itemize}


\subsection{Acknowledgement}\label{ssec:acknowl}
\

This paper is written as a part of the project under the Yulia's Dream initiative, a subdivision of the
MIT PRIMES program aimed at Ukrainian high-school students.
We are grateful to the referees for their useful suggestions that improved the exposition.
A.T.\ is deeply indebted to Andrei Negu\c{t} for sharing his invaluable insights, teaching the Lyndon word's theory,
and stimulating discussions through the entire project. A.T.\ is grateful to IHES for the hospitality and wonderful
working conditions in the Spring 2023, when the final version of this note was prepared. The work of A.T.\ was
partially supported by NSF Grants DMS-$2037602$ and DMS-$2302661$.


\medskip

\section{Lyndon words approach to Lie algebras}\label{sec:SL-words basics}

In this section, we recall the results of~\cite{LR} and~\cite{L} that provide a combinatorial construction of
an important basis of finitely generated Lie algebras, with the main application to the maximal nilpotent
subalgebra of a simple Lie algebra.


\subsection{Lyndon words}\label{ssec:L-words}
\

Let $I$ be a finite ordered alphabet, and let $I^*$ be the set of all finite length words in the alphabet $I$.
For $u=[i_1 \dots i_k]\in I^*$, we define its \emph{length} by $|u|=k$. We introduce the \emph{lexicographical order}
on $I^*$ in a standard way:
$$
  [i_1 \dots i_k] < [j_1 \dots j_l] \quad \text{if }\
  \begin{cases}
    i_1=j_1, \dots, i_a=j_a, i_{a+1} < j_{a+1} \text{ for some } a \geq 0 \\
      \text{\ or} \\
    i_1=j_1, \dots, i_k=j_k \text{ and } k < l
  \end{cases}.
$$

\begin{definition}\label{def:lyndon}
A word $\ell=[i_1\dots i_k]$ is called \textbf{Lyndon} if it is smaller than all of its cyclic permutations:
\begin{equation}\label{eq:lyndon}
  [i_1 \dots i_{a-1} i_a \dots i_k] < [i_a \dots i_k i_1 \dots i_{a-1}] \qquad \forall\, a \in \{2,\dots,k\}.
\end{equation}
\end{definition}

For a word $w = [i_1 \dots i_k]\in I^*$, the subwords:
\begin{equation}\label{eq:pre-suf}
  w_{a|} =  [i_1 \dots i_a] \qquad \text{and} \qquad w_{|a} = [i_{a+1} \dots i_k]
\end{equation}
with $0\leq a\leq k$ will be called a \emph{prefix} and a \emph{suffix} of $w$, respectively.
We call such a prefix or a suffix \emph{proper} if $0<a<k$. It is straightforward to show that
Definition~\ref{def:lyndon} is equivalent to the following one:

\begin{definition}\label{def:lyndon-2}
A word $w$ is \textbf{Lyndon} if it is smaller than all of its proper suffixes:
\begin{equation}\label{eq:Lyndon-condition-2}
  w < w_{|a} \qquad \forall\ 0<a<|w|.
\end{equation}
\end{definition}

As an immediate corollary, we obtain the following well-known result:

\begin{lemma}\label{lemma:lyndon}
If $\ell_1 < \ell_2$ are Lyndon, then $\ell_1\ell_2$ is also Lyndon, and so $\ell_1 \ell_2 < \ell_2 \ell_1$.
\end{lemma}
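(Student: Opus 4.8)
The plan is to prove Lemma~\ref{lemma:lyndon} in two stages: first establish that $\ell_1\ell_2$ is Lyndon, and then deduce $\ell_1\ell_2 < \ell_2\ell_1$ as a consequence. I would work with the suffix characterization from Definition~\ref{def:lyndon-2}, since concatenation interacts cleanly with suffixes.

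First I would show $\ell_1\ell_2$ is Lyndon by checking that it is strictly smaller than each of its proper suffixes $s$. A proper suffix of $w=\ell_1\ell_2$ is either a proper suffix of $\ell_2$, or of the form $t\ell_2$ where $t$ is a proper (nonempty) suffix of $\ell_1$. In the first case, since $\ell_2$ is Lyndon we have $\ell_2 < s$; and since $\ell_1 < \ell_2$ with $\ell_1$ a prefix consideration giving $w = \ell_1\ell_2 < \ell_2 \le$ (the comparison with $s$), I would chain $w < \ell_2 < s$ — the point being that $w$ begins with $\ell_1 < \ell_2$, so $w < \ell_2$, which is the step needing a small lexicographic argument (either $\ell_1$ is not a prefix of $\ell_2$, giving the inequality immediately, or it is, and then one compares further). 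In the second case, $s = t\ell_2$ with $t$ a proper suffix of $\ell_1$; since $\ell_1$ is Lyndon, $\ell_1 < t$, and because $\ell_1$ cannot be a prefix of its own strictly shorter suffix $t$ in a way that forces equality, one gets $\ell_1 < t \le t\ell_2 = s$ and hence $w = \ell_1\ell_2 < s$. Thus $w$ is smaller than all its proper suffixes and is Lyndon.

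For the second assertion $\ell_1\ell_2 < \ell_2\ell_1$, I would use the fact that $\ell_1\ell_2$, being Lyndon, is strictly smaller than all its cyclic permutations (Definition~\ref{def:lyndon}), and $\ell_2\ell_1$ is precisely the cyclic permutation obtained by rotating past the block $\ell_1$. Alternatively, and more directly, since $\ell_1 < \ell_2$ and $\ell_1$ is the prefix of $\ell_1\ell_2$ while $\ell_2$ is the prefix of $\ell_2\ell_1$, the comparison reduces to comparing $\ell_1$ and $\ell_2$ unless $\ell_1$ is a prefix of $\ell_2$; handling that subcase is where a little care is needed, as one must unfold the lexicographic comparison further.

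The main obstacle I anticipate is the bookkeeping of the lexicographic order in the degenerate subcases where one Lyndon word is a prefix of another (e.g.\ when $\ell_1$ is a prefix of $\ell_2$), since the definition of $<$ on $I^*$ in~\eqref{eq:pre-suf}'s preceding display splits into a ``first differing letter'' case and a ``shorter word'' case. I would dispatch these by repeatedly invoking the Lyndon property (a Lyndon word is strictly less than each proper suffix, so it cannot equal a suffix of itself) to rule out the pathological equalities, making the strict inequalities go through. Everything else is a routine application of the two equivalent definitions of a Lyndon word.
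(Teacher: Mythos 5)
Your argument is correct, and it takes a genuinely (if mildly) different route from the paper's: the paper verifies the cyclic-permutation criterion of Definition~\ref{def:lyndon} directly, splitting the rotations $u_j$ of $\ell_1\ell_2$ according to whether the cut point lies inside $\ell_1$ or inside $\ell_2$, whereas you verify the proper-suffix criterion of Definition~\ref{def:lyndon-2}. The combinatorial core is shared --- both proofs rest on (i) a Lyndon word being strictly smaller than each proper suffix, and (ii) the fact that an inequality $u<v$ witnessed by a first differing \emph{letter} survives right-concatenation --- but your organization buys a real simplification in the second case: once $\ell_1\ell_2<\ell_2$ is established (immediate if $\ell_1$ is not a prefix of $\ell_2$; if $\ell_2=\ell_1 u$, then $\ell_2<u$ by~\eqref{eq:Lyndon-condition-2} gives $\ell_1\ell_2<\ell_1 u=\ell_2$), transitivity with $\ell_2<s$ dispatches every proper suffix $s$ of $\ell_2$ at once, avoiding the paper's bookkeeping with the rotated tail in its Case 2. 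For the suffixes $t\ell_2$ with $t$ a proper suffix of $\ell_1$, your written chain $\ell_1<t\le t\ell_2$ does not by itself yield $\ell_1\ell_2<t\ell_2$ (lexicographic inequalities of prefix type can be overturned by appending), but you correctly supply the missing ingredient: since $|t|<|\ell_1|$, the inequality $\ell_1<t$ is forced to occur at a differing letter in position $\le|t|$, and then concatenation preserves it --- this is precisely the paper's Case 1 argument. One small enumeration slip: your list of proper suffixes of $\ell_1\ell_2$ omits $s=\ell_2$ itself (the case $t$ empty), but this is harmless since you prove $\ell_1\ell_2<\ell_2$ en route. Your deduction of $\ell_1\ell_2<\ell_2\ell_1$ from~\eqref{eq:lyndon}, as the rotation of the Lyndon word $\ell_1\ell_2$ past the block $\ell_1$, is exactly the paper's final step.
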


\begin{proof}
Let $\ell_1=i_1 i_2 \dots i_k$ and $\ell_2=i_{k+1}i_{k+2} \dots i_n$. Any cyclic permutation of the word
$\ell_1 \ell_2$ is of the form $u_j=i_j i_{j+1} \dots i_n i_1 i_2 \dots i_{j-1}$ with $1<j\leq k$ or $k<j\leq n$.
\begin{itemize}[leftmargin=0.5cm]

\item[$\bullet$] Case 1: $1<j\leq k$.
Since $\ell_1$ is Lyndon, we have ${\ell_1}_{|j-1}=i_j \dots i_k > \ell_1$ by~\eqref{eq:Lyndon-condition-2}.
As $|\ell_1|>|{\ell_1}_{|j-1}|$, there is $p\in \{j,j+1,\ldots,k\}$ such that $i_1=i_j,\ldots,i_{p-j}=i_{p-1}$
and $i_{p-j+1}<i_p$. This immediately implies the desired inequality $\ell_1 \ell_2<u_j$.

\item[$\bullet$] Case 2: $k<j\leq n$.
Since $\ell_2$ is Lyndon, we have ${\ell_2}_{|j-k-1}=i_j \dots i_n \geq \ell_2$ by~\eqref{eq:Lyndon-condition-2}
and so ${\ell_2}_{|j-k-1}=i_j \dots i_n > \ell_1$ as $\ell_2>\ell_1$. If $\ell_1$ is not a prefix of ${\ell_2}_{|j-k-1}$,
then $i_j=i_1, i_{j+1}=i_2,\ldots, i_{j+p-2}=i_{p-1}$ and $i_{j+p-1}>i_p$ for some $1\leq p\leq \min\{k,n-j+1\}$,
so that $\ell_1 \ell_2<u_j$. On the other hand, if $\ell_1$ is a prefix of ${\ell_2}_{|j-k-1}$, then
${\ell_2}_{|j-k-1}=\ell_1 i_{j+k} \dots i_n=\ell_1 {\ell_2}_{|j-1}$. In the latter case, the desired inequality
$\ell_1 \ell_2<u_j$ follows from ${\ell_2}_{|j-1}>\ell_2$, a consequence of~\eqref{eq:Lyndon-condition-2}.

\end{itemize}
This completes the proof of the first claim that $\ell_1\ell_2$ is Lyndon.
The second claim, the inequality $\ell_1\ell_2<\ell_2\ell_1$, follows now from~\eqref{eq:lyndon}.
\end{proof}

We recall the following two basic facts from the theory of Lyndon words:

\begin{proposition}\label{prop:cost.factor}(\cite[Proposition 5.1.3]{Lo})
Any Lyndon word $\ell$ has a factorization:
\begin{equation}\label{eqn:cost.factor}
  \ell = \ell_1 \ell_2
\end{equation}
defined by the property that $\ell_2$ is the longest proper suffix of $\ell$ which is also a Lyndon word.
Under these circumstances, $\ell_1$ is also a Lyndon word.
\end{proposition}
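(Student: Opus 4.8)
The plan is to prove the statement in two parts: first establish that the factorization is well defined, and then carry out the real work of showing that the prefix $\ell_1$ is Lyndon. For well-definedness, I would note that $|\ell|\geq 2$ (otherwise $\ell$ has no proper suffix) and that every single letter is vacuously a Lyndon word, so the last letter of $\ell$ is a proper Lyndon suffix. Hence the set of proper Lyndon suffixes of $\ell$ is nonempty; since there is exactly one suffix of each admissible length, the longest such suffix $\ell_2$ is unique, and setting $\ell_1$ to be the complementary (nonempty) prefix gives the factorization $\ell=\ell_1\ell_2$. Everything then reduces to proving that $\ell_1$ is Lyndon.

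The key auxiliary step I would prove first is a clean characterization of $\ell_2$: among all proper suffixes of $\ell$, the word $\ell_2$ is the lexicographically smallest. Let $m$ be the smallest proper suffix. Any proper suffix of $m$ is again a proper suffix of $\ell$, hence is $\geq m$ by minimality, and is distinct from $m$ for length reasons, hence is $>m$; by Definition~\ref{def:lyndon-2} this shows $m$ is Lyndon. Conversely, if some Lyndon proper suffix $v$ were strictly longer than $m$, then $m$ would be a proper suffix of $v$ (of two suffixes of $\ell$, the shorter is a suffix of the longer), forcing $v<m$ since $v$ is Lyndon --- contradicting the minimality of $m$. Thus $m$ is the longest proper Lyndon suffix, i.e. $m=\ell_2$.

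With this in hand, I would verify the Lyndon condition of Definition~\ref{def:lyndon-2} for $\ell_1$ directly. Fix a proper suffix $t$ of $\ell_1$ and write $\ell_1=pt$ with $p$ nonempty; then $t\ell_2$ is a proper suffix of $\ell=pt\ell_2$, so the Lyndon property of $\ell$ gives $\ell_1\ell_2<t\ell_2$. Comparing $\ell_1$ with $t$ lexicographically, if they first differ within the first $|t|$ letters this difference is inherited by $\ell_1\ell_2$ and $t\ell_2$, which together with $\ell_1\ell_2<t\ell_2$ forces $\ell_1<t$, exactly as wanted. The only remaining possibility is the \emph{border case}, in which $t$ is simultaneously a prefix and a suffix of $\ell_1$; writing $\ell_1=tr$ with $r$ nonempty and cancelling the common prefix $t$ from $\ell_1\ell_2<t\ell_2$ yields $r\ell_2<\ell_2$. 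But $r$ is a proper suffix of $\ell_1$, so $r\ell_2$ is a proper suffix of $\ell$, and the characterization above gives $\ell_2\leq r\ell_2$ --- contradicting $r\ell_2<\ell_2$. Hence the border case never occurs, so $\ell_1<t$ for every proper suffix $t$, and $\ell_1$ is Lyndon.

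I expect the main obstacle to be precisely this border case: the naive inheritance of a first letter-difference from $\ell_1$ versus $t$ to $\ell_1\ell_2$ versus $t\ell_2$ breaks down exactly when $t$ is a prefix of $\ell_1$, and ruling it out is what forces me to isolate and use the minimality characterization of $\ell_2$ rather than argue with the Lyndon property of $\ell$ alone. A secondary point requiring care throughout is the comparison of words of different lengths, where the lexicographic convention makes a proper prefix strictly smaller; keeping track of which word is a prefix of the other is what makes the case analysis exhaustive.
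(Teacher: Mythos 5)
Your proof is correct, and it is complete: the well-definedness step, the characterization of $\ell_2$ as the lexicographically smallest proper suffix, and the treatment of the border case (where $t$ is simultaneously a prefix and suffix of $\ell_1$, ruled out via the minimality of $\ell_2$) are all sound under the paper's lexicographic convention. The paper itself gives no proof of this statement --- it simply cites \cite[Proposition 5.1.3]{Lo} --- and your argument is essentially the standard one from that source, so there is nothing to reconcile.
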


The factorization~\eqref{eqn:cost.factor} is called a \textbf{costandard factorization} of a Lyndon word.

\begin{proposition}\label{prop:canon.factor}(\cite[Proposition 5.1.5]{Lo})
Any word $w$ has a unique factorization:
\begin{equation}\label{eqn:canon.factor}
  w = \ell_1 \dots \ell_k
\end{equation}
where $\ell_1 \geq \dots \geq \ell_k$ are all Lyndon words.
\end{proposition}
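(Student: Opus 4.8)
The plan is to treat existence and uniqueness separately, deriving existence directly from Lemma~\ref{lemma:lyndon} and reducing uniqueness to the identification of the smallest factor with a lexicographic extremum of $w$.

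For existence, I would begin from the trivial factorization of $w$ into single letters, each of which is a Lyndon word. Given any factorization $w = \ell_1 \dots \ell_m$ into Lyndon words that fails to be non-increasing, there is an index $i$ with $\ell_i < \ell_{i+1}$; by Lemma~\ref{lemma:lyndon} the product $\ell_i \ell_{i+1}$ is again Lyndon, so replacing these two factors by their concatenation yields a shorter factorization of $w$ into Lyndon words. Since the number of factors is a positive integer that strictly decreases at each such step, the procedure terminates, and at termination every adjacent pair satisfies $\ell_i \geq \ell_{i+1}$. This produces a factorization of the required form.

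For uniqueness, the key step I would isolate is the claim that in any non-increasing factorization $w = \ell_1 \dots \ell_k$ into Lyndon words, the final factor $\ell_k$ coincides with the lexicographically smallest nonempty suffix of $w$. Since the latter depends on $w$ alone (suffixes of distinct lengths are distinct, so the lexicographic order restricts to a total order with a unique minimum), this forces $\ell_k = m_p$ for any second such factorization $w = m_1 \dots m_p$, after which cancelling the common suffix and inducting on $|w|$ finishes the argument. To prove the claim I would let $s$ be an arbitrary nonempty suffix and show $\ell_k \leq s$. If $|s| \leq |\ell_k|$, then $s$ is a suffix of the Lyndon word $\ell_k$, so $\ell_k \leq s$ by Definition~\ref{def:lyndon-2}. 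If $|s| > |\ell_k|$, then $s = \ell_i' \ell_{i+1} \dots \ell_k$ for some $i<k$ with $\ell_i'$ a nonempty suffix of $\ell_i$; here $\ell_i \leq \ell_i'$ (again by Definition~\ref{def:lyndon-2}) and $\ell_i \geq \ell_k$ by the non-increasing hypothesis, whence $\ell_k \leq \ell_i'$, and a short comparison then yields $\ell_k \leq s$.

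The main obstacle I anticipate is exactly this last comparison, namely passing from $\ell_k \leq \ell_i'$ to $\ell_k \leq s$ when $s$ begins with $\ell_i'$. One must observe that $\ell_k \leq \ell_i'$ already excludes the degenerate possibility that $\ell_i'$ is a proper prefix of $\ell_k$ (which would reverse the inequality), and then handle uniformly the remaining cases: either $\ell_k$ is a prefix of $\ell_i'$, so that $s$ shares the initial segment $\ell_k$ but is strictly longer and hence strictly larger, or $\ell_k$ and $\ell_i'$ first differ at a position within both, where $s$ inherits the larger letter. These are routine prefix and lexicographic case distinctions, but they are precisely where the two defining properties — the Lyndon condition on each factor and the global non-increasing condition — must be combined, so I would first record the elementary principle that the comparison of $xy$ with $y$ (or of $\ell_k$ with $s$) is governed by the comparison of $x$ with $y$ before invoking it.
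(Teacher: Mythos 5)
Your proof is correct, including the delicate final comparison: from $\ell_k \leq \ell_i'$ you correctly rule out that $\ell_i'$ is a proper prefix of $\ell_k$ and then obtain $\ell_k < s$ in all remaining cases, and the merging argument for existence terminates since each application of Lemma~\ref{lemma:lyndon} strictly decreases the number of factors. The paper itself gives no proof — the statement is imported from \cite[Proposition~5.1.5]{Lo} — and your argument (existence by merging ascending adjacent Lyndon factors, uniqueness by identifying the last factor with the lexicographically smallest nonempty suffix via Definition~\ref{def:lyndon-2}, then cancelling and inducting on $|w|$) is essentially the standard proof found there.
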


The factorization~\eqref{eqn:canon.factor} is called a \textbf{canonical factorization}.


\subsection{Standard bracketing}\label{ssec:brackets}
\

Let $\fa$ be a Lie algebra generated by a finite set $\{e_i\}_{i\in I}$ labelled by the alphabet~$I$.

\begin{definition}\label{def:bracketing}
The standard bracketing of a Lyndon word $\ell$ is given inductively~by:
\begin{itemize}[leftmargin=0.7cm]

\item[$\bullet$]
$\sb[i]=e_i\in \fa$ for $i \in I$,

\item[$\bullet$]
$\sb[\ell] = [\sb[\ell_1], \sb[\ell_2]]\in \fa$, where $\ell=\ell_1\ell_2$ is
the costandard factorization~\eqref{eqn:cost.factor}.

\end{itemize}
\end{definition}

The major importance of this definition is due to the following result of Lyndon:

\begin{theorem}\label{thm:Lyndon theorem}(\cite[Theorem 5.3.1]{Lo})
If $\fa$ is a free Lie algebra in the generators $\{e_i\}_{i\in I}$, then the set
$\big\{\sb[\ell]|\ell\mathrm{-Lyndon\ word}\big\}$ provides a basis of $\fa$.
\end{theorem}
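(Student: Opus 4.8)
The plan is to work inside the tensor algebra $T=T(V)$ on $V=\bigoplus_{i\in I}\BC e_i$, which is the free associative algebra on $I$ and into which the free Lie algebra $\fa$ embeds as the Lie subalgebra generated by the $e_i$. The words $I^*$ form the standard monomial basis of $T$, so expanding each bracketing $\sb[\ell]$ into a linear combination of words lets me compare the family $\{\sb[\ell]\}$ against this basis. The entire argument rests on a triangularity statement with respect to the lexicographic order, combined with the factorization results already recorded and, at the very end, the PBW theorem.

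First I would establish the key triangularity lemma: for every Lyndon word $\ell$,
\begin{equation*}
  \sb[\ell] = \ell + \sum_{w > \ell} c_w\, w \qquad (c_w \in \BZ),
\end{equation*}
the sum running over words $w$ of the same multidegree as $\ell$. This proceeds by induction on $|\ell|$. Using the costandard factorization $\ell=\ell_1\ell_2$ of Proposition~\ref{prop:cost.factor}, one has $\ell_1 < \ell_2$ (a Lyndon word is smaller than its proper suffixes and larger than its proper prefixes, so $\ell_1 < \ell < \ell_2$), and $\sb[\ell]=\sb[\ell_1]\sb[\ell_2]-\sb[\ell_2]\sb[\ell_1]$ in $T$. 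Feeding in the inductive triangularity of the two factors and noting that every word occurring has fixed length $|\ell_1|$ or $|\ell_2|$, a block-by-block lexicographic comparison shows that any product $u_1u_2$ with $u_1\geq\ell_1$, $u_2\geq\ell_2$ satisfies $u_1u_2\geq\ell_1\ell_2=\ell$, with equality only for $u_1=\ell_1,u_2=\ell_2$; the reversed product contributes only words $\geq\ell_2\ell_1>\ell$ by Lemma~\ref{lemma:lyndon}. Hence the minimal word of $\sb[\ell]$ is $\ell$, with coefficient $1$. Linear independence of $\{\sb[\ell]\}$ is then immediate, since distinct Lyndon words yield distinct leading words and a family that is unitriangular against a basis is independent.

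The remaining and main point is spanning. I would upgrade the triangularity to ordered products: for Lyndon words $\ell_1\geq\dots\geq\ell_k$,
\begin{equation*}
  \sb[\ell_1]\dots\sb[\ell_k] = \ell_1\dots\ell_k + \sum_{w > \ell_1\dots\ell_k} c_w\, w,
\end{equation*}
again by the fixed-length block comparison. By the canonical factorization of Proposition~\ref{prop:canon.factor}, the map sending a non-increasing sequence of Lyndon words to its concatenation is a bijection onto $I^*$, so these ordered products are unitriangular against the word basis and therefore form a basis of $T$. Counting in each multidegree $\alpha$ gives $\sum_\alpha (\dim T_\alpha)\,t^\alpha = \prod_\beta (1-t^\beta)^{-L_\beta}$, where $L_\beta$ is the number of Lyndon words of multidegree $\beta$.

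Finally I would compare this with the PBW theorem applied to $\fa$ itself. Under the standard identification $T=U(\fa)$, PBW expresses the same dimensions through any homogeneous basis of $\fa$, giving $\sum_\alpha(\dim T_\alpha)t^\alpha = \prod_\beta (1-t^\beta)^{-\dim\fa_\beta}$. Equating the two product expansions forces $L_\beta=\dim\fa_\beta$ for every $\beta$. Combined with the linear independence already established, this shows $\{\sb[\ell]:\deg\ell=\alpha\}$ is a basis of each finite-dimensional graded piece $\fa_\alpha$, hence $\{\sb[\ell]\}$ is a basis of $\fa$. I expect the main obstacle to be precisely this spanning step: the triangularity and independence are essentially bookkeeping, whereas concluding spanning genuinely requires the interplay between the word-basis count and the PBW count (equivalently, matching the Lyndon-word count with Witt's dimension formula for $\fa_\alpha$).
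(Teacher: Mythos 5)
Your proposal is correct and follows essentially the same route as the proof the paper points to (it cites \cite[Theorem 5.3.1]{Lo} without reproducing the argument): the triangularity $\sb[\ell]=\ell+\sum_{w>\ell}c_w\,w$ via the costandard factorization, unitriangularity of the decreasing products $\sb[\ell_1]\cdots\sb[\ell_k]$ against the word basis of the tensor algebra via the canonical factorization, and spanning by matching the Lyndon-word count in each multidegree with the PBW dimension count. This is the standard textbook proof, carried out accurately.
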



\subsection{Standard Lyndon words}\label{ssec:SL-words}
\

It is natural to ask if Theorem~\ref{thm:Lyndon theorem} admits a generalization to Lie algebras $\fa$ generated
by $\{e_i\}_{i\in I}$ but with some defining relations. The answer was provided a few decades later in~\cite{LR}.
To state the result, define ${_we},e_w\in U(\fa)$ for any $w\in I^*$:
\begin{itemize}[leftmargin=0.7cm]

\item[$\bullet$]
For a word $w = [i_1 \dots i_k]\in I^*$, we set
\begin{equation}\label{eqn:word}
  _we = e_{i_1} \dots e_{i_k} \in U(\fa)
\end{equation}

\item[$\bullet$]
For a word $w\in I^*$ with a canonical factorization $w=\ell_1 \dots \ell_k$ of~\eqref{eqn:canon.factor}, we set
\begin{equation}\label{eqn:bracket.word}
  e_w = e_{\ell_1} \dots e_{\ell_k} \in U(\fa)
\end{equation}
with $e_{\ell} = \sb[\ell]\in \fa$ for any Lyndon word $\ell$, cf.\ Definition~\ref{def:bracketing}.

\end{itemize}

It is well-known that the elements~\eqref{eqn:word} and~\eqref{eqn:bracket.word} are connected
by the following triangularity property:
\begin{equation}\label{eqn:upper}
  e_w=\sum_{v \geq w} c^v_w \cdot {_ve} \qquad \mathrm{with} \quad
  c^v_w\in \BZ \quad \mathrm{and}\quad c_w^w = 1.
\end{equation}

The following definition is due to \cite{LR}:

\begin{definition}\label{def:standard}
(a) A word $w$ is called \textbf{standard} if $_we$ cannot be expressed as a linear combination of
$_ve$ for various $v>w$, with $_we$ as in~\eqref{eqn:word}.

\noindent
(b) A Lyndon word $\ell$ is called \textbf{standard Lyndon} if $e_{\ell}$ cannot be expressed as
a linear combination of $e_m$ for various Lyndon words $m>\ell$, with $e_{\ell}=\sb[\ell]$ as above.
\end{definition}

The following result is nontrivial and justifies the above terminology:

\begin{proposition}\label{prop:standard}(\cite{LR})
A Lyndon word is standard iff it is standard Lyndon.
\end{proposition}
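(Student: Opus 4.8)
The plan is to fix a homogeneous component and treat both conditions in Definition~\ref{def:standard} as \emph{leading–term} conditions with respect to the lexicographic order, then pass between them using the unitriangular relation~\eqref{eqn:upper}. Throughout I work inside a fixed weight component $U(\fa)_\gamma$ (words of a fixed content $\gamma$), which is finite–dimensional, so all spans are finite and the lexicographic order restricts to a total order on a finite set. For a Lyndon word $\ell$ the two statements to be matched are: $\ell$ is standard, i.e. ${}_\ell e\notin \mathrm{span}\{{}_ve:\text{ word }v>\ell\}$; and $\ell$ is standard Lyndon, i.e. $e_\ell\notin \mathrm{span}\{e_m:\text{ Lyndon }m>\ell\}$. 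I will insert an intermediate notion: a (not necessarily Lyndon) word $w$ is \emph{$e$-standard} if $e_w\notin\mathrm{span}\{e_v:\text{ word }v>w\}$, and establish the chain ``$w$ standard $\Leftrightarrow$ $w$ is $e$-standard'' for all words, and ``$\ell$ is $e$-standard $\Leftrightarrow$ $\ell$ is standard Lyndon'' for Lyndon $\ell$.

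For the first equivalence I would invert the triangularity~\eqref{eqn:upper}. Since the matrix $(c^v_w)_{v\geq w}$ is unitriangular over $\BZ$ with respect to the lexicographic order (and block–diagonal in $\gamma$), it is invertible, and substituting back yields an identity in $U(\fa)$ of the form ${}_we=\sum_{v\geq w}d^v_w\,e_v$ with $d^w_w=1$ and $d^v_w\in\BZ$. Consequently $\mathrm{span}\{{}_ve:v\geq w\}=\mathrm{span}\{e_v:v\geq w\}$ for every $w$, so the change of basis preserves lexicographic leading terms. Explicitly, if $\ell$ is not standard, write ${}_\ell e=\sum_{v>\ell}b_v\,{}_ve$ and substitute the inverse relation into both sides; every $e_u$ occurring on the right then has $u\geq v>\ell$, while the left side equals $e_\ell+\sum_{u>\ell}d^u_\ell e_u$, so the $e_\ell$–terms do not cancel and $e_\ell\in\mathrm{span}\{e_u:\text{ word }u>\ell\}$, i.e. $\ell$ is not $e$-standard; the reverse implication is symmetric using~\eqref{eqn:upper} directly. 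This settles ``standard $\Leftrightarrow$ $e$-standard'', and with it the easy half of the proposition: since $\{m\text{ Lyndon}>\ell\}\subseteq\{v\text{ word}>\ell\}$, being $e$-standard forces being standard Lyndon, whence standard $\Rightarrow$ standard Lyndon.

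The substance is the converse, for which I must eliminate non-Lyndon words from the span: if $e_\ell\in\mathrm{span}\{e_v:\text{ word }v>\ell\}$ then already $e_\ell\in\mathrm{span}\{e_m:\text{ Lyndon }m>\ell\}$. Here I would use the PBW filtration on $U(\fa)$. By~\eqref{eqn:bracket.word}, a word $v$ with canonical factorization $v=m_1\cdots m_k$ (Proposition~\ref{prop:canon.factor}, $m_1\geq\cdots\geq m_k$ Lyndon) gives $e_v=e_{m_1}\cdots e_{m_k}$ of PBW–degree $k$, with symbol $\bar e_{m_1}\cdots\bar e_{m_k}\in S^k(\fa)=\mathrm{gr}_k U(\fa)$, whereas $e_\ell\in\fa$ has PBW–degree $1$. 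In a putative relation $e_\ell=\sum_v a_v e_v$, passing to the top PBW–degree $k_{\max}\geq 2$ present forces $\sum_{\mathrm{len}(v)=k_{\max}} a_v\,\bar e_{m_1}\cdots\bar e_{m_{k_{\max}}}=0$ in $S^{k_{\max}}(\fa)$; using that these symmetric monomials are (after reduction) linearly independent, this kills the top–degree coefficients, lowers $k_{\max}$, and after iterating leaves an identity among the $e_m$ with $m$ Lyndon, which a downward induction on the lexicographic order further restricts to $m>\ell$.

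The main obstacle is exactly this last reduction — controlling the higher PBW–degree terms and proving the requisite (partial) independence of the symbols $\bar e_{m_1}\cdots\bar e_{m_k}$ in $S(\fa)$. The clean way to organize it is a simultaneous induction on $|\gamma|$: one proves, alongside the proposition, the companion statement that the standard words of weight $\gamma$ are precisely the non-increasing concatenations of standard Lyndon words, so that $\{e_w:w\text{ standard}\}$ is a PBW–type basis of $U(\fa)_\gamma$ (the free case being Theorem~\ref{thm:Lyndon theorem}). Since each factor $m_i$ of a non-Lyndon $v$ has weight strictly smaller than $|\gamma|$, the inductive hypothesis supplies the basis of symbols against which the vanishing of the top–degree part is read off, closing the induction. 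I expect the only delicate point to be the bookkeeping in this simultaneous induction — matching leading terms across the unitriangular change of basis while tracking PBW degree — with the individual ingredients (inversion of~\eqref{eqn:upper}, Proposition~\ref{prop:canon.factor}, and Lemma~\ref{lemma:lyndon}) all routine.
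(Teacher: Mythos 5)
Two preliminary remarks. First, the paper itself does not prove this proposition: it is imported from \cite{LR} (see also \cite[Section 2]{NT} for a detailed treatment), so your proposal has to be measured against that argument. Second, a good part of your outline is correct: the equivalence ``standard $\Leftrightarrow$ $e$-standard'' obtained by inverting the unitriangular system \eqref{eqn:upper} inside a fixed (finite-dimensional) weight component is sound --- the sets $\{v : v\geq w\}$ are up-sets for the total lexicographic order on words of fixed weight, so $\mathrm{span}\{{}_ve : v\geq w\}=\mathrm{span}\{e_v : v\geq w\}$ for every $w$, and with it the implication ``standard $\Rightarrow$ standard Lyndon'' is complete. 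I also note that your companion basis statements are cheaper than you think: $\{e_m : m\ \text{standard Lyndon of weight}\ \mu\}$ spans $\fa_\mu$ by projecting Theorem~\ref{thm:Lyndon theorem} from the free Lie algebra and rewriting upward, and is linearly independent by the minimal-coefficient argument (a nontrivial dependence would express its lex-smallest member through larger Lyndon words, contradicting Definition~\ref{def:standard}(b)); no induction on weight is needed for this.

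The genuine gap is in the hard direction, and it sits exactly where you wave at ``bookkeeping.'' Your claim that vanishing of the top symbol in $S^{k_{\max}}(\fa)$ ``kills the top-degree coefficients'' is false as stated: the monomials $\bar e_{m_1}\cdots\bar e_{m_k}$ attached to distinct words $v$ are not independent (whenever $\fa$ has relations, distinct Lyndon words can have proportional, or equal, bracketings), and after rewriting each factor in the standard-Lyndon basis the vanishing of the \emph{reduced} symbol only says that the top-degree part of $\sum_v a_ve_v$ lies in lower filtration; it is exchanged for commutator corrections $e_{n_1}\cdots[e_{n_i},e_{n_j}]\cdots e_{n_k}$, not deleted. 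To iterate while preserving the running constraint ``all words $>\ell$'' you then need \emph{lexicographic control of these brackets}: for Lyndon $n<n'$ one must know that $[\sb[n],\sb[n']]$ is a $\BZ$-combination of $\sb[m]$ over Lyndon $m\geq nn'$ of the same weight. This is precisely \cite[Lemma 2.2]{LR}, a statement about the free Lie algebra (hence inherited by $\fa$), proved from Lyndon combinatorics in the spirit of Lemma~\ref{lemma:lyndon}; cf.\ also how the present paper extracts \eqref{eqn:bracket-commutator-0} from \eqref{eqn:upper} and Lemma~\ref{lemma:lyndon}. Your simultaneous induction on the weight cannot substitute for it at the critical spot: when $k_{\max}=2$ the bracket $[e_n,e_{n'}]$ lands in $\fa_\gamma$ itself --- the very weight you are working in --- so the inductive hypothesis is unavailable, and without the lemma the terminal identity only yields $e_\ell\in\mathrm{span}\{e_m : m\ \text{Lyndon of weight}\ \gamma\}$ with no lower bound $m>\ell$, which is vacuous (it is perfectly consistent with $\ell$ being standard Lyndon, with $e_\ell$ simply reproducing itself), so no contradiction is reached. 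Once \cite[Lemma 2.2]{LR} is granted, your plan does close, and in fact simplifies: rewrite each factor monotonically (equal weight forces equal length, so concatenation of same-weight replacements is lex-monotone, avoiding prefix pathologies), reorder using the lemma so that every correction has strictly larger concatenation, and conclude by PBW independence of non-increasing monomials in the standard-Lyndon basis --- the symbol computation in $S(\fa)$ becomes unnecessary. So the missing idea is exactly this lex-controlled expansion of brackets of Lyndon bracketings; the remaining ingredients you list are indeed routine, but they do not imply it.
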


The major importance of this definition is due to the following result:

\begin{theorem}\label{thm:standard Lyndon theorem}(\cite[Theorem 2.1]{LR})
For any Lie algebra $\fa$ generated by a finite collection $\{e_i\}_{i\in I}$, the set
$\big\{\sb[\ell]|\ell\mathrm{-standard\ Lyndon\ word}\big\}$ provides a basis of $\fa$.
\end{theorem}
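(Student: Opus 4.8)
The plan is to deduce the statement from the Poincar\'e--Birkhoff--Witt theorem for $U(\fa)$, transferring everything to the enveloping algebra where the word combinatorics lives. Concretely, it suffices to prove that the PBW-ordered products $\sb[\ell_1]\cdots\sb[\ell_k]$, taken over all weakly decreasing sequences $\ell_1\geq\cdots\geq\ell_k$ of standard Lyndon words, form a basis of $U(\fa)$; by PBW and its standard converse this is equivalent to $\{\sb[\ell]\}_{\ell\,\text{std Lyndon}}$ being a basis of $\fa$. Throughout I would exploit the multigrading of $U(\fa)$ by $\nn$, in which ${}_{w}e$ and $e_w$ are homogeneous of weight equal to the content of $w$: each graded piece $U(\fa)_\mu$ is spanned by the finitely many ${}_{w}e$ of content $\mu$, hence is finite-dimensional, which lets me replace basis arguments by dimension counts.

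First I would show that $\{{}_{w}e : w\ \text{standard}\}$ is a basis of $U(\fa)$. Spanning is a descending induction within each finite content class: if $w$ is not standard then, by Definition~\ref{def:standard}(a), ${}_{w}e$ is a combination of ${}_{v}e$ with $v>w$ of the same content, and since the lexicographic order restricted to a content class is a finite total order, iterating expresses every ${}_{w}e$ through standard ones. Linear independence is the minimality argument: in a hypothetical relation, the lex-smallest standard word occurring would violate its own standardness. Recording dimensions, this yields $\#\{w\ \text{standard of content}\ \mu\}=\dim U(\fa)_\mu$.

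Next I would pass from ${}_{w}e$ to $e_w$ using the triangularity \eqref{eqn:upper}, which reads $e_w={}_{w}e+\sum_{v>w}c^v_w\,{}_{v}e$; that is, the lex-smallest word occurring in $e_w$ is $w$ itself, with coefficient $1$. This leading-term structure makes $\{e_w : w\ \text{standard of content}\ \mu\}$ linearly independent (take the smallest $w$ with nonzero coefficient in a relation: the word $w$ occurs in $e_w$ but in no $e_{w'}$ with $w'>w$), and since there are exactly $\dim U(\fa)_\mu$ of them, they form a basis of $U(\fa)_\mu$. Hence $\{e_w : w\ \text{standard}\}$ is a basis of $U(\fa)$.

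It remains to identify this basis with the PBW-ordered monomials in the standard Lyndon words, and this is where I expect the real work to be. Recall that $e_w=\sb[\ell_1]\cdots\sb[\ell_k]$ is assembled from the canonical factorization $w=\ell_1\cdots\ell_k$ of Proposition~\ref{prop:canon.factor}. The crux is thus the combinatorial stability statement: \emph{$w$ is standard if and only if every factor $\ell_i$ of its canonical factorization is a standard Lyndon word}. The direction that a decreasing product of standard Lyndon words is again standard, and the converse that standardness forces each Lyndon factor to be standard, both rest on tracking the lex-leading terms of bracketings through \eqref{eqn:upper}, combined with Proposition~\ref{prop:standard}, which for Lyndon words identifies ``standard'' with ``standard Lyndon''. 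Granting this equivalence, $\{e_w : w\ \text{standard}\}$ is precisely the set of PBW monomials $\sb[\ell_1]\cdots\sb[\ell_k]$ with $\ell_1\geq\cdots\geq\ell_k$ standard Lyndon; being a basis of $U(\fa)$, the converse direction of the PBW theorem then gives that $\{\sb[\ell]\}_{\ell\,\text{std Lyndon}}$ is a basis of $\fa$, as claimed. The main obstacle is therefore this factorization-stability lemma, the genuinely nontrivial bridge between the word combinatorics and the Lie-algebra PBW structure underlying Proposition~\ref{prop:standard}.
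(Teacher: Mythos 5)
The paper offers no proof of this theorem: it is quoted from~\cite[Theorem 2.1]{LR}, so the only in-paper material to compare with is the fragment of the same machinery reproduced inside the proof of Proposition~\ref{prop:generalized Leclerc} (where $\{{}_we\}_{w-\mathrm{standard}}$ is shown to span, and $\{e_w\}_{w-\mathrm{standard}}$ to be an upper-triangular basis with respect to it). Your outline does follow the Lalonde--Ram strategy faithfully: standard monomials ${}_we$ as a basis of $U(\fa)$, the triangularity~\eqref{eqn:upper} transferring this to $\{e_w\}$, the factorization-stability lemma, and the converse PBW argument. But as a proof it is incomplete at precisely the point you flag yourself: the equivalence ``$w$ is standard iff every Lyndon factor of its canonical factorization~\eqref{eqn:canon.factor} is standard'' is only asserted, with a gesture toward ``tracking lex-leading terms,'' and Proposition~\ref{prop:standard}, which you invoke as an ingredient, is itself part of the same nontrivial package in~\cite{LR}. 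That lemma (together with the product estimate that for Lyndon words $m_1,\dots,m_k$ the expansion of $e_{m_1}\cdots e_{m_k}$ in the ${}_ve$'s is bounded below by the decreasing rearrangement) is where all the work lies, so what you have is a correct skeleton rather than a proof. A smaller repair: your independence argument for $\{e_w\}$ reads off ``the coefficient of ${}_we$,'' but $\{{}_ve\}_{v\in I^*}$ is \emph{not} a basis of $U(\fa)$, so such coefficients are not well defined; you must first rewrite~\eqref{eqn:upper} so that only \emph{standard} $v\geq w$ occur (re-expanding each non-standard ${}_ve$ through standard $u\geq v$), and then use your step 1 --- exactly the maneuver the paper performs between~\eqref{eqn:bracket-commutator-0} and~\eqref{eqn:bracket-commutator-1}. (Your independence argument for the ${}_we$'s themselves is fine, since Definition~\ref{def:standard}(a) is literally a non-expressibility statement.)

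A second genuine gap concerns generality. The theorem is stated for an \emph{arbitrary} Lie algebra $\fa$ generated by $\{e_i\}_{i\in I}$, whose defining relations need not be homogeneous; in that case $U(\fa)$ carries no $\nn$-grading in which ${}_we$ is homogeneous of the content of $w$, and both of your structural devices collapse: the dimension count $\#\{w\ \text{standard of content}\ \mu\}=\dim U(\fa)_\mu$ is meaningless, and the termination of your descending induction (which relies on projecting a relation to a fixed finite content class) is no longer guaranteed --- with the paper's lexicographic order a word $v>w$ may be longer or shorter than $w$, so the set of competitors is infinite. Your argument is perfectly adequate for the graded situations the paper actually uses, namely $\fn^+$ and $\widehat{\fn}^+$ with the homogeneous Serre relations~\eqref{eqn:Serre}, but to prove the statement as given one must replace the grading by the length filtration and run the induction as in~\cite{LR}. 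So: right route, with the central combinatorial lemma outsourced and the scaffolding valid only in the graded case.
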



\subsection{Application to simple Lie algebras}\label{ssec:LR-bijection}
\

Let $\fg$ be a simple Lie algebra with a root system $\Delta=\Delta^+ \sqcup \Delta^-$.
Let $\{\alpha_i\}_{i\in I}\subset \Delta^+$ be the simple roots, and $Q=\bigoplus_{i\in I} \BZ\alpha_i$
be the root lattice. We endow $Q$ with the symmetric pairing $(\cdot,\cdot)$ so that the Cartan matrix
$(a_{ij})_{i,j\in I}$ of $\fg$ is given by $a_{ij} = \frac {2(\alpha_i,\alpha_j)}{(\alpha_i,\alpha_i)}$.
The Lie algebra $\fg$ admits the standard \textbf{root space decomposition}:
\begin{equation}\label{eq:root.decomp}
  \fg=\fh \oplus \bigoplus_{\alpha \in \Delta} \fg_{\alpha} \,,
  \quad \fh\subset\fg -\mathrm{Cartan\ subalgebra},
\end{equation}
with $\dim(\fg_{\alpha})=1$ for all $\alpha\in \Delta$.
We pick \emph{root vectors} $e_\alpha\in \fg_\alpha$ so that $\fg_\alpha=\BC\cdot e_\alpha$.

Consider the \emph{positive} Lie subalgebra $\fn^+=\bigoplus_{\alpha \in \Delta^+} \fg_{\alpha}$ of $\fg$.
Explicitly, $\fn^+$ is generated by $\{e_i\}_{i\in I}$ subject to the classical \emph{Serre} relations:
\begin{equation}\label{eqn:Serre}
  \underbrace{[e_i,[e_i,\dots,[e_i,e_j]\dots]]}_{1-a_{ij} \text{ Lie brackets}}\, =\, 0 \qquad \forall\ i\neq j.
\end{equation}
Let $Q^+=\bigoplus_{i\in I} \BZ_{\geq 0}\alpha_i$. The Lie algebra $\fn^+$ is naturally $Q^+$-graded via $\deg(e_i)=\alpha_i$.

Fix any order on the set $I$. According to Theorem~\ref{thm:standard Lyndon theorem}, $\fn^+$ has a basis consisting
of the $e_{\ell}$'s, as $\ell$ ranges over all standard Lyndon words. Evoking the above $Q^+$-grading of the Lie
algebra $\fn^+$, it is natural to define the grading of words as follows:
\begin{equation}\label{eqn:degree.word}
  \deg [i_1 \dots i_k] = \alpha_{i_1} + \dots + \alpha_{i_k} \in Q^+.
\end{equation}
Due to the decomposition~\eqref{eq:root.decomp} and the fact that the root vectors
$\{e_\alpha\}_{\alpha\in \Delta^+} \subset \fn^+$ all live in distinct degrees $\alpha \in Q^+$,
we conclude that there exists a bijection~\cite{LR}:
\begin{equation}\label{eqn:associated word}
  \ell \colon \Delta^+ \,\iso\, \big\{\slaws \big\}
\end{equation}
such that $\deg \ell(\alpha) = \alpha$ for all $\alpha \in \Delta^+$.
We call~\eqref{eqn:associated word} the \emph{Lalonde-Ram}'s bijection.


\subsection{Results of Leclerc and Rosso}\label{ssec:L-and-R}
\

The Lalonde-Ram's bijection~\eqref{eqn:associated word} was described explicitly in~\cite{L}.
To state the result, we recall that for a root $\gamma=\sum_{i\in I} n_i\alpha_i\in \Delta^+$,
its \emph{height} is $\hgt(\gamma)=\sum_{i\in I} n_i$.

\begin{proposition}\label{prop:Leclerc algorithm}(\cite[Proposition 25]{L})
The bijection $\ell$ is inductively given by:
\begin{itemize}[leftmargin=0.7cm]

\item[$\bullet$]
for simple roots, we have $\ell(\alpha_i)=[i]$

\item[$\bullet$]
for other positive roots, we have the following \emph{Leclerc's} algorithm:
\begin{equation}\label{eqn:inductively}
  \ell(\alpha) =
  \max\left\{ \ell(\gamma_1)\ell(\gamma_2) \Big|
               \alpha=\gamma_1+\gamma_2 \,,\, \gamma_1,\gamma_2\in \Delta^+ \,,\, \ell(\gamma_1) < \ell(\gamma_2) \right\}.
\end{equation}
\end{itemize}
\end{proposition}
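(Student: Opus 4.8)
The plan is to prove the two-part characterization of $\ell(\alpha)$ by induction on the height $\hgt(\alpha)$, simultaneously establishing that the right-hand side of \eqref{eqn:inductively} is a well-defined standard Lyndon word of degree $\alpha$ and that it coincides with $\ell(\alpha)$. The base case $\hgt(\alpha)=1$ is immediate: the simple root $\alpha_i$ has $\ell(\alpha_i)=[i]$, and there is no nontrivial decomposition $\alpha_i=\gamma_1+\gamma_2$ with $\gamma_1,\gamma_2\in\Delta^+$. For the inductive step, fix $\alpha$ with $\hgt(\alpha)\geq 2$ and assume the formula holds for all positive roots of smaller height, so that $\ell(\gamma)$ is known and standard Lyndon for every $\gamma\in\Delta^+$ with $\hgt(\gamma)<\hgt(\alpha)$.

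First I would show that $\ell(\alpha)$, the genuine Lalonde-Ram word of degree $\alpha$, admits a costandard factorization $\ell(\alpha)=\ell_1\ell_2$ in which both factors are themselves standard Lyndon words, by Proposition~\ref{prop:cost.factor} together with Proposition~\ref{prop:standard}. The crucial point is that $\ell_1$ and $\ell_2$, being standard Lyndon of strictly smaller length (hence smaller height), must equal $\ell(\gamma_1)$ and $\ell(\gamma_2)$ for the unique positive roots $\gamma_1=\deg\ell_1$, $\gamma_2=\deg\ell_2$; here I would invoke that the bracketing $\sb[\ell(\alpha)]=[\sb[\ell_1],\sb[\ell_2]]$ is a nonzero root vector in $\fn^+_\alpha$, which forces $\gamma_1,\gamma_2\in\Delta^+$ with $\gamma_1+\gamma_2=\alpha$, and that $\ell_1<\ell_2$ follows from $\ell(\alpha)$ being Lyndon via Lemma~\ref{lemma:lyndon}. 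This shows $\ell(\alpha)$ is one of the words $\ell(\gamma_1)\ell(\gamma_2)$ appearing in the set on the right of \eqref{eqn:inductively}, so that $\ell(\alpha)\leq\max\{\cdots\}$.

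For the reverse inequality I would argue that any candidate $\ell(\gamma_1)\ell(\gamma_2)$ in that set is $\leq\ell(\alpha)$. The key mechanism is the triangularity \eqref{eqn:upper}: each $\ell(\gamma_j)$ is standard Lyndon, so by Lemma~\ref{lemma:lyndon} the concatenation $\ell(\gamma_1)\ell(\gamma_2)$ is Lyndon, and its bracketing lies in $\fn^+_\alpha=\BC\cdot e_\alpha$, which is one-dimensional. Thus $\sb[\ell(\gamma_1)\ell(\gamma_2)]$ is a scalar multiple of the root vector, and if its leading word in the lexicographic order strictly exceeded $\ell(\alpha)$, the defining maximality of $\ell(\alpha)$ among standard Lyndon words of degree $\alpha$ (Definition~\ref{def:standard}(b) combined with the fact that $\ell(\alpha)$ is the unique standard Lyndon word of that degree) would be contradicted. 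Combining the two inequalities gives $\ell(\alpha)=\max\{\cdots\}$.

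The main obstacle I anticipate is the careful bookkeeping in the inductive step showing that the costandard factors of $\ell(\alpha)$ are \emph{exactly} the Lalonde-Ram words of their degrees, rather than merely Lyndon words of the right degree. This requires knowing that for each $\gamma\in\Delta^+$ there is a \emph{unique} standard Lyndon word of degree $\gamma$ --- which is precisely the content of the bijection \eqref{eqn:associated word} --- and that standardness is preserved under taking costandard factors, so that the inductive hypothesis applies cleanly. Handling the interplay between the grading by $Q^+$, the one-dimensionality of each root space, and the uniqueness in \eqref{eqn:associated word} is where the genuine force of the Lalonde-Ram theory enters; once these structural facts are in hand, the lexicographic extremality argument is essentially formal.
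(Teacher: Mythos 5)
Your skeleton does match the paper's: the paper proves the affine generalization (Proposition~\ref{prop:generalized Leclerc}, whose argument specializes to the finite statement), and there too one direction comes from the costandard factorization $\ell(\alpha)=\ell_1\ell_2$ (showing $\ell(\alpha)$ lies in the candidate set), while the reverse inequality uses triangularity plus one-dimensionality of $\fn^+_\alpha$. Your first direction is essentially fine, modulo a misattribution: $\ell_1<\ell_2$ follows from Definition~\ref{def:lyndon-2} ($\ell_2$ is a proper suffix of the Lyndon word $\ell(\alpha)$, hence $\ell_2>\ell(\alpha)\geq\ell_1$ since $\ell_1$ is a prefix), not from Lemma~\ref{lemma:lyndon}, which is the converse implication that you correctly need later to see that each candidate concatenation is Lyndon.

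The genuine gap is in the reverse inequality. You argue with $\sb[\ell(\gamma_1)\ell(\gamma_2)]$, but the standard bracketing of the concatenation $w=\ell(\gamma_1)\ell(\gamma_2)$ is computed along the costandard factorization of $w$, which need not be the pair $(\ell(\gamma_1),\ell(\gamma_2))$: for instance $w=123=\ell(\alpha_1{+}\alpha_2)\,\ell(\alpha_3)$ has costandard factorization $(1,23)$, so $\sb[w]=[e_1,[e_2,e_3]]\neq[[e_1,e_2],e_3]$. Worse, your contradiction ("if its leading word strictly exceeded $\ell(\alpha)$, standardness would be violated") silently assumes $\sb[w]\neq 0$: if $\sb[w]=0$ there is no leading word, Definition~\ref{def:standard}(b) yields no contradiction, and nothing in your argument excludes a candidate $w>\ell(\alpha)$ with vanishing bracketing --- note that in the affine version~\eqref{eq:generalized Leclerc} nonvanishing must be imposed as an explicit hypothesis, so this is a real phenomenon. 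The paper's proof sidesteps both issues by working with the commutator $[\sb[\ell(\gamma_1)],\sb[\ell(\gamma_2)]]$ rather than with $\sb[w]$: in finite type this commutator is automatically a nonzero multiple of $e_{\ell(\alpha)}$, since $[\fg_{\gamma_1},\fg_{\gamma_2}]=\fg_{\gamma_1+\gamma_2}$ for roots; then the triangularity~\eqref{eqn:upper} applied to $e_{\ell(\gamma_1)}e_{\ell(\gamma_2)}$ and $e_{\ell(\gamma_2)}e_{\ell(\gamma_1)}$ shows the commutator expands over words $v\geq w$, and --- since the elements $_ve$ are linearly \emph{dependent} in $U(\fn^+)$, a point your "leading word" language skates over --- one must re-expand in the genuine basis $\{e_v\}_{v-\text{standard}}$, upper triangular with respect to $\{{_ve}\}_{v-\text{standard}}$, before comparing with $c\cdot e_{\ell(\alpha)}$, $c\in\BC^\times$, to conclude $\ell(\alpha)\geq w$. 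Replacing $\sb[w]$ by this commutator and inserting the standard-basis re-expansion step would close the gap.
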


\noindent
Formula~\eqref{eqn:inductively} recovers $\ell(\alpha)$ once we know $\ell(\gamma)$ for all
$\{\gamma\in \Delta^+ \,|\, \mathrm{ht}(\gamma)<\mathrm{ht}(\alpha)\}$.

\begin{remark}
While Lalonde-Ram computed explicitly the $\slaws$ for any simple $\fg$ and a specific order
in~\cite[Theorem 3.4]{LR}, the above Leclerc's algorithm allows to find $\slaws$ for any simple $\fg$
and any ordering of its simple roots. Moreover, this algorithm is easy to program on a computer.
\end{remark}

We shall also need one more important property of $\ell$. To the end, let us recall:

\begin{definition}\label{def:convex}
A total order on the set of positive roots $\Delta^+$ is \textbf{convex} if:
\begin{equation}\label{eqn:convex}
  \alpha < \alpha+\beta < \beta
\end{equation}
for all $\alpha < \beta \in \Delta^+$ such that $\alpha+\beta$ is also a root.
\end{definition}

\begin{remark}
It is well-known (\cite{P}) that convex orders on $\Delta^+$ are in bijection with reduced decompositions
of the longest element in the Weyl group of $\fg$.
\end{remark}

The following result is~\cite[Proposition 26]{L}, where it is attributed to the preprint of Rosso~\cite{R2}
(a detailed proof can be found in~\cite[Proposition 2.34]{NT}):

\begin{proposition}\label{prop:fin.convex}
Consider the order on $\Delta^+$ induced from the lexicographical order on $\slaws$:
\begin{equation}\label{eqn:induces}
  \alpha < \beta \quad \Longleftrightarrow \quad \ell(\alpha) < \ell(\beta)  \ \ \mathrm{ lexicographically}.
\end{equation}
This order is convex.
\end{proposition}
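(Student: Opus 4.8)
The plan is to unwind Definition~\ref{def:convex} and, for $\alpha<\beta\in\Delta^+$ with $\gamma:=\alpha+\beta\in\Delta^+$, to verify the two inequalities $\ell(\alpha)<\ell(\gamma)$ and $\ell(\gamma)<\ell(\beta)$ separately; throughout, ``$<$'' denotes the lexicographic order on $\slaws$ transported via the bijection~\eqref{eqn:associated word}. The inequality $\ell(\alpha)<\ell(\gamma)$ is immediate from Leclerc's algorithm (Proposition~\ref{prop:Leclerc algorithm}): since $\gamma=\alpha+\beta$ with $\ell(\alpha)<\ell(\beta)$, the word $\ell(\alpha)\ell(\beta)$ is one of the competitors in the maximum~\eqref{eqn:inductively}, so $\ell(\gamma)\geq\ell(\alpha)\ell(\beta)$; and $\ell(\alpha)$, being a proper prefix of $\ell(\alpha)\ell(\beta)$, is lexicographically smaller, whence $\ell(\alpha)<\ell(\alpha)\ell(\beta)\leq\ell(\gamma)$. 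This step uses nothing specific about the decomposition, so in fact $\gamma_1<\gamma$ for \emph{every} way of writing $\gamma=\gamma_1+\gamma_2$ with $\gamma_1,\gamma_2\in\Delta^+$ and $\ell(\gamma_1)<\ell(\gamma_2)$.

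For the harder inequality $\ell(\gamma)<\ell(\beta)$, I would first record an elementary refinement of Lemma~\ref{lemma:lyndon}: if $\lambda<\mu$ are Lyndon, then $\lambda\mu<\mu$, proved by the same prefix/non-prefix dichotomy as in Lemma~\ref{lemma:lyndon}. I would then invoke the structural input from the theory of standard Lyndon words, namely that the costandard factorization $\ell(\gamma)=\ell(\gamma_1)\,\ell(\gamma_2)$ of Proposition~\ref{prop:cost.factor} has \emph{standard} Lyndon factors, so that $\gamma_1,\gamma_2\in\Delta^+$ with $\gamma_1+\gamma_2=\gamma$, and that it realizes the maximum in~\eqref{eqn:inductively}. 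This is where standardness (Proposition~\ref{prop:standard}) and the fact that $\sb[\ell(\gamma)]=[\sb[\ell(\gamma_1)],\sb[\ell(\gamma_2)]]$ is a nonzero root vector enter. Granting this, the elementary refinement gives $\ell(\gamma)=\ell(\gamma_1)\ell(\gamma_2)<\ell(\gamma_2)$, i.e.\ $\gamma<\gamma_2$.

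It remains to pass from the distinguished upper factor $\gamma_2$ to the arbitrary upper factor $\beta$ of the given decomposition. Here I would compare leading letters: as $\ell(\gamma)$ is Lyndon, its first letter is the smallest letter it contains, hence the smallest element of the support of $\gamma=\alpha+\beta$, which does not exceed the first letter of $\ell(\beta)$. If it is strictly smaller, we obtain $\ell(\gamma)<\ell(\beta)$ at once. The remaining case of equal leading letters genuinely occurs --- already in rank two, e.g.\ in type $G_2$ --- and is the heart of the matter; here I would induct on $\hgt(\gamma)$, using the convexity already proved for roots of smaller height, to show that $\gamma_2$ is the \emph{minimal} upper factor over all decompositions of $\gamma$, i.e.\ $\ell(\gamma_2)\leq\ell(\beta)$. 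Together with $\gamma<\gamma_2$ this yields $\ell(\gamma)<\ell(\gamma_2)\leq\ell(\beta)$, completing the verification of Definition~\ref{def:convex}.

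I expect this last minimality step to be the main obstacle. It forces one to compare two distinct additive decompositions $\gamma=\gamma_1+\gamma_2=\delta_1+\delta_2$ of a single root and to control the lexicographic order of the corresponding concatenations past their common prefix, feeding the inductive convexity hypothesis through the rearrangement $\gamma_2-\delta_2=\delta_1-\gamma_1$. The accompanying structural lemma --- that the costandard factorization coincides with the Leclerc maximizer and has root-theoretic factors --- is the other delicate ingredient, since it is precisely what translates the combinatorics of words back into statements about $\Delta^+$.
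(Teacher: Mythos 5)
Your first inequality is complete and correct: by Proposition~\ref{prop:Leclerc algorithm} the concatenation $\ell(\alpha)\ell(\beta)$ competes in the maximum~\eqref{eqn:inductively}, and a proper prefix is lexicographically smaller, so $\ell(\alpha)<\ell(\alpha)\ell(\beta)\leq\ell(\gamma)$. Your auxiliary ingredients are also all true: the refinement $\lambda\mu<\mu$ of Lemma~\ref{lemma:lyndon} (your prefix/non-prefix dichotomy works, the prefix case reducing to $\mu$ being smaller than its proper suffixes), and the fact that the costandard factorization $\ell(\gamma)=\ell(\gamma_1)\ell(\gamma_2)$ has \emph{standard} Lyndon factors with $\gamma_1,\gamma_2\in\Delta^+$ (this is standard Lalonde--Ram/Leclerc theory, and is exactly what this paper uses in the proof of Proposition~\ref{prop:generalized Leclerc}); hence indeed $\ell(\gamma)<\ell(\gamma_2)$. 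Be aware that the paper itself does not prove Proposition~\ref{prop:fin.convex} --- it cites \cite[Proposition 28]{L}, attributed to \cite{R2}, with the detailed proof in \cite[Proposition 2.34]{NT} --- and your outline reproduces the shape of that argument faithfully up to this point.

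The genuine gap is that the whole difficulty of the second inequality $\ell(\gamma)<\ell(\beta)$ is concentrated in your ``minimality'' claim $\ell(\gamma_2)\leq\ell(\beta)$ for every decomposition, and for it you offer only the stated intention to induct on $\hgt(\gamma)$: the induction is never set up, and the step it would require is a root-theoretic rearrangement lemma --- from the two decompositions $\gamma=\alpha+\beta=\gamma_1+\gamma_2$ one must extract, via pairing arguments, some $\kappa\in\Delta\cup\{0\}$ with (say) $\gamma_2=\alpha+\kappa$ and $\beta=\gamma_1+\kappa$, or the mirrored variant --- which you allude to through the identity $\gamma_2-\delta_2=\delta_1-\gamma_1$ but neither state precisely nor prove. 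That lemma, fed into the inductive convexity hypothesis and Leclerc's inequality $\ell(\beta)\geq\ell(\gamma_1)\ell(\kappa)$, is the actual engine of the known proof. Note also that your reduction is stronger than necessary and not obviously true: minimality of $\gamma_2$ holds automatically when the maximizing concatenation satisfies $\ell(\alpha)\ell(\beta)=\ell(\gamma)$ (then $\ell(\beta)$ is a proper suffix of $\ell(\gamma)$ and $\ell(\gamma_2)$ is the smallest proper suffix), but in the strict case $\ell(\alpha)\ell(\beta)<\ell(\gamma)$ the efficient route is the dichotomy: either $\ell(\beta)\geq\ell(\gamma_2)$, and then $\ell(\gamma)<\ell(\gamma_2)\leq\ell(\beta)$ at once, or $\ell(\beta)<\ell(\gamma_2)$, and then one proves $\ell(\gamma)<\ell(\beta)$ \emph{directly} from $\ell(\beta)\geq\ell(\gamma_1)\ell(\kappa)>\ell(\gamma_1)\ell(\gamma_2)=\ell(\gamma)$, using $\ell(\kappa)>\ell(\gamma_2)$ supplied by induction applied to $\gamma_2=\alpha+\kappa$. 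Since the rearrangement lemma and the organization of this induction are precisely the content of Rosso's theorem, your write-up, while correctly scoped, leaves its heart unproved.
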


\begin{remark}
We note that both Proposition~\ref{prop:Leclerc algorithm} and Proposition~\ref{prop:fin.convex}
are of crucial importance for the further application to quantum groups $U_q(\fg)$, see~\cite{L}.
\end{remark}


\medskip

\section{Generalization to affine Lie algebras}\label{sec:affine-Leclerc}

In this section, we generalize Proposition~\ref{prop:Leclerc algorithm} to the case of affine Lie algebras~$\fg$.
As an example, we compute all $\aslaws$ for $\fg$ of type $A^{(1)}_1$.


\subsection{Affine Lie algebras}\label{ssec:affineLie}
\

In this section, we consider the next simplest class of Kac-Moody Lie algebras after the simple ones,
the affine Lie algebras. Let $\fg$ be a simple finite-dimensional Lie algebra, $\{\alpha_i\}_{i\in I}$ be
the simple roots, and $\theta\in \Delta^+$ be the highest root (with the maximal value of $\hgt(\theta)$).
We define $\wI = I \sqcup \{0\}$. Consider the affine root lattice $\wQ=Q \times \BZ$ with the generators
$\{(\alpha_i,0)\}_{i\in I}$ and $\alpha_0:=(-\theta,1)$. We endow $\wQ$ with the symmetric pairing defined by:
\begin{equation}\label{eq:affine pairing}
  \big((\alpha,n),(\beta,m)\big)=(\alpha,\beta) \qquad \forall\ \alpha,\beta\in Q \,,\, n,m \in \BZ.
\end{equation}
This leads to the affine Cartan matrix $(a_{ij})_{i,j\in \wI}$ and the \textbf{affine Lie algebra} $\widehat{\fg}$.
The associated affine root system $\wDelta=\wDelta^+ \sqcup \wDelta^-$ has the following explicit description:
\begin{align}
  & \wDelta^+ = \big\{ \Delta^+ \times \BZ_{\geq 0} \big\}
    \sqcup \big\{ 0 \times \BZ_{>0} \big\}
    \sqcup \big\{ \Delta^- \times \BZ_{>0} \big\},
  \label{eqn:hat plus} \\
  & \wDelta^- = \big\{ \Delta^- \times \BZ_{\leq 0} \big\}
    \sqcup \big\{ 0 \times \BZ_{<0} \big\}
    \sqcup \big\{ \Delta^+ \times \BZ_{<0} \big\},
  \label{eqn:hat minus}
\end{align}
where $\BZ_{\geq 0}$, $\BZ_{>0}$, $\BZ_{\leq 0}$, $\BZ_{<0}$ denote the obvious subsets of $\BZ$. Here,
$\delta=\alpha_0+\theta=(0,1) \in Q \times \BZ$ is the minimal \emph{imaginary root} of the affine root system
$\wDelta$. With this notation, we have the following root space decomposition, cf.~\eqref{eq:root.decomp}:
\begin{equation}\label{eq:aff.root.decomp}
  \widehat{\fg}=\widehat{\fh} \oplus \bigoplus_{\alpha \in \wDelta} \widehat{\fg}_{\alpha} \,,
  \quad \widehat{\fh}\subset \widehat{\fg} -\mathrm{Cartan\ subalgebra}.
\end{equation}

Let us now recall another realization of $\widehat{\fg}$. To this end, consider the Lie algebra
\begin{equation}\label{eq:loops}
\begin{split}
  & \widetilde{\fg}=\fg\otimes \BC[t,t^{-1}]\oplus \BC\cdot \mathsf{c}
    \quad \mathrm{with\ a\ Lie\ bracket\ given\ by}\\
  & [x\otimes t^n,y\otimes t^m]=[x,y]\otimes t^{n+m}+n\delta_{n,-m}(x,y)\cdot \mathsf{c}
    \quad \mathrm{and} \quad
    [\mathsf{c},x\otimes t^n]=0
\end{split}
\end{equation}
where $x,y\in \fg$, $n,m\in \BZ$, and $(\cdot,\cdot)\colon \fg\times\fg\to \BC$ is
a non-degenerate invariant pairing.

The rich theory of affine Lie algebras is mainly based on the following key result:

\begin{claim}
There exists a Lie algebra isomorphism:
\begin{equation}\label{eq:affine-iso}
  \widehat{\fg}\ \iso\ \widetilde{\fg}
\end{equation}
determined on the generators by the following formulas:
\begin{align*}
  & e_i \mapsto e_i \otimes t^0 & &
    f_i \mapsto f_i \otimes t^0 & &
    h_i \mapsto h_i \otimes t^0 \qquad \forall\, i\in I \,,\\
  & e_0 \mapsto f_\theta \otimes t^1 & &
    f_0 \mapsto e_\theta \otimes t^{-1} & &
    h_0 \mapsto [f_\theta,e_\theta]\otimes t^0 + (f_\theta,e_\theta)\cdot \mathsf{c} \,,
\end{align*}
where $e_\theta$ and $f_\theta$ are root vectors of degrees $\theta$ and $-\theta$, respectively.
\end{claim}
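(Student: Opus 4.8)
My plan is to exploit the standard presentation of the affine Lie algebra. Since the affine Cartan matrix $(a_{ij})_{i,j\in\wI}$ is symmetrizable, the Gabber--Kac theorem guarantees that $\widehat{\fg}$ is generated by the Chevalley generators $\{e_i,f_i,h_i\}_{i\in\wI}$ subject \emph{precisely} to the Chevalley relations ($[h_i,h_j]=0$, $[h_i,e_j]=a_{ij}e_j$, $[h_i,f_j]=-a_{ij}f_j$, $[e_i,f_j]=\delta_{ij}h_i$) together with the Serre relations. Constructing the map $\phi\colon\widehat{\fg}\to\widetilde{\fg}$ therefore reduces to checking that the proposed images of these generators satisfy all of these relations; this verification is the bulk of the work, but it is routine once the root vectors are suitably normalized.

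Concretely, I would first normalize the invariant form on $\fg$ so that $(\theta,\theta)=2$ and rescale $e_\theta,f_\theta$ so that $(e_\theta,f_\theta)=1$; then $[e_\theta,f_\theta]=\theta^\vee\in\fh$ is the coroot of $\theta$, matching the image $-\theta^\vee\otimes t^0+\mathsf{c}$ of $h_0$. For indices $i,j\in I$ every relation holds trivially, since $e_i\otimes t^0$, $f_i\otimes t^0$, $h_i\otimes t^0$ satisfy the relations of the copy $\fg\otimes t^0\subset\widetilde{\fg}$. Only the relations involving the node $0$ require genuine checking. Using the bracket in~\eqref{eq:loops}, the Chevalley relations become identities such as $[e_0,f_0]=[f_\theta\otimes t,e_\theta\otimes t^{-1}]=-\theta^\vee\otimes t^0+(e_\theta,f_\theta)\,\mathsf{c}=h_0$, the mixed relation $[h_0,e_j]=-\langle\alpha_j,\theta^\vee\rangle e_j=a_{0j}e_j$ (the last equality being the value $a_{0j}=-\langle\alpha_j,\theta^\vee\rangle$ read off from $\alpha_0=-\theta$), and $[e_0,f_j]=[f_\theta,f_j]\otimes t=0$ for $j\in I$, which holds because $\theta+\alpha_j\notin\Delta$. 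The Serre relation at the node $0$, namely $(\mathrm{ad}\,e_0)^{1-a_{0j}}e_j=0$, translates into $(\mathrm{ad}\,f_\theta)^{1-a_{0j}}e_j=0$ inside $\fg$; this is exactly where the choice of $\theta$ as the \emph{highest} root is indispensable, since it governs the $\theta$-string through $\alpha_j$ and forces it to terminate after the prescribed number of steps.

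Once $\phi$ is produced, surjectivity is immediate. The images of $\{e_i,f_i\}_{i\in I}$ generate $\fg\otimes t^0$; bracketing this copy of $\fg$ against $e_0=f_\theta\otimes t^1$ and $f_0=e_\theta\otimes t^{-1}$ produces $\fg\otimes t^{\pm1}$, because $\fg$ is simple and hence the $\mathrm{ad}$-submodule it generates from any nonzero vector is all of $\fg$. Induction via $[\fg\otimes t^a,\fg\otimes t^b]=\fg\otimes t^{a+b}+(\cdots)\mathsf{c}$ then yields $\fg\otimes t^n$ for all $n$, and finally $\mathsf{c}$ appears already in $[e_0,f_0]$, so $\phi$ is onto.

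The main obstacle is injectivity. Here I would observe that $\ker\phi$ is a $\wQ$-graded ideal of $\widehat{\fg}$ meeting the Cartan subalgebra $\widehat{\fh}$ trivially, since $\phi$ sends the linearly independent $\{h_i\}_{i\in\wI}$ to linearly independent elements of $\fh\otimes t^0\oplus\BC\mathsf{c}$. By the defining property of the Kac--Moody algebra $\widehat{\fg}$ — equivalently, by the Gabber--Kac theorem for the symmetrizable matrix $(a_{ij})_{i,j\in\wI}$ — there is no nonzero graded ideal intersecting $\widehat{\fh}$ trivially, forcing $\ker\phi=0$. Alternatively, one may finish by a dimension count: $\phi$ is graded, and the explicit root description~\eqref{eqn:hat plus}--\eqref{eqn:hat minus} together with the known affine root multiplicities ($1$ on real roots and $|I|$ on each $k\delta$) shows that corresponding graded pieces of $\widehat{\fg}$ and $\widetilde{\fg}$ have equal finite dimension, so a graded surjection between them is automatically bijective in every degree.
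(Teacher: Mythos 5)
The paper itself offers no proof of this Claim: it is quoted as the standard realization theorem for untwisted affine Lie algebras (Kac's Theorem 7.4, combined with the Gabber--Kac presentation), so there is no in-paper argument to compare against. Judged on its own, your proposal is precisely the standard textbook proof and is essentially correct: the relation checks at the node $0$ are right (in particular $[e_0,f_0]=[f_\theta\otimes t,e_\theta\otimes t^{-1}]=-[e_\theta,f_\theta]\otimes t^0+(e_\theta,f_\theta)\mathsf{c}$ matches $h_0$ under your normalization, and the Serre relations at $0$ do reduce to the $\theta$-string computation $(\mathrm{ad}\,f_\theta)^{1+\langle\alpha_j,\theta^\vee\rangle}e_j=0$, which uses $\theta+\alpha_j\notin\Delta$), and the surjectivity argument via irreducibility of the adjoint representation is fine.

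Two points deserve sharpening. First, the statement is only true for the \emph{derived} affine algebra: $\widetilde{\fg}$ in \eqref{eq:loops} contains no degree operator $d$, and the zero-degree piece $\fh\otimes t^0\oplus\BC\mathsf{c}$ is $(|I|+1)$-dimensional, so $\widehat{\fg}$ must be read as the algebra generated by $\{e_i,f_i,h_i\}_{i\in\wI}$ alone (as the Claim's generator list indeed suggests). Your appeal to ``no nonzero graded ideal meets $\widehat{\fh}$ trivially'' is usually stated for the full Kac--Moody algebra $\fg(A)$ with its enlarged Cartan; to apply it here you should add one line: a $\wQ$-graded ideal $I$ of the derived algebra is automatically $\mathrm{ad}$-stable under the full Cartan, hence is an ideal of $\fg(A)$ meeting $\fh$ trivially, hence zero --- or argue directly with the invariant form, since $0\neq x\in I\cap\widehat{\fg}_\alpha$ yields $[x,y]=(x,y)\nu^{-1}(\alpha)\neq 0$ in $I\cap\widehat{\fh}$ for suitable $y\in\widehat{\fg}_{-\alpha}$. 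Second, your ``alternative'' finish by dimension count is circular in the usual logical order: the affine root multiplicities ($1$ on real roots, $|I|$ on $k\delta$) are normally \emph{derived from} this very isomorphism --- indeed the paper itself deduces \eqref{eq:aff-dim} from the Claim via \eqref{eqn:real-roots}--\eqref{eqn:imaginary-roots} --- so that route should be dropped or explicitly supplied with an independent computation of $\dim\widehat{\fg}_{k\delta}$. With the primary ideal-theoretic argument, your proof stands.
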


In view of this result, we can explicitly describe the root subspaces from~\eqref{eq:aff.root.decomp}:
\begin{align}
  & \widehat{\fg}_{(\alpha,k)}=\fg_\alpha\otimes t^k \quad \mathrm{for} \
    (\alpha,k)\in \wDelta^{+,\mathrm{re}}:=\big\{\Delta^+ \times \BZ_{\geq 0} \big\} \sqcup \big\{ \Delta^- \times \BZ_{>0} \big\},
  \label{eqn:real-roots}\\
  & \widehat{\fg}_{k\delta}=\fh \otimes t^k \quad \mathrm{for} \
    k\delta\in \wDelta^{+,\mathrm{im}}:=\big\{ 0 \times \BZ_{>0} \big\}.
  \label{eqn:imaginary-roots}
\end{align}
As $\dim(\fg_\alpha)=1$ for any $\alpha\in \Delta$ and $\dim(\fh)=\rk(\fg)=|I|$, we thus obtain:
\begin{equation}\label{eq:aff-dim}
  \dim(\widehat{\fg}_\alpha)=1 \quad \forall\ \alpha\in \wDelta^{+,\mathrm{re}} \,, \qquad
  \dim(\widehat{\fg}_\alpha)=|I| \quad \forall\ \alpha\in \wDelta^{+,\mathrm{im}}.
\end{equation}

\noindent
\textbf{Notation:} In what follows, we shall always simply write $xt^n$ instead of $x\otimes t^n$.


\subsection{Affine standard Lyndon words}\label{ssec:aslaws}
\

It is natural to ask if the above results can be generalized to affine Lie algebras $\widehat{\fg}$.
On the Lie algebraic side, we consider only the \emph{positive} subalgebra
  $\widehat{\fn}^+=\bigoplus_{\alpha\in \wDelta^+} \widehat{\fg}_{\alpha}$.
Thus, $\widehat{\fn}^+$ is generated by $\{e_i\}_{i\in \wI}$ subject to the Serre
relations~\eqref{eqn:Serre} for $i\ne j\in \wI$. On the combinatorial side, we consider
the finite alphabet $\wI$ with any order on it, which allows to define Lyndon and standard Lyndon words
(with respect to $\widehat{\fn}^+$). We shall use the term \textbf{$\aslaws$} in the present setup.

The key difference with the case of simple $\fg$ is that some root subspaces are not one-dimensional,
see~\eqref{eq:aff-dim}. Thus, we do not get such a simple bijection as~\eqref{eqn:associated word} for
simple Lie algebras. However, the degree reasoning as in Subsection~\ref{ssec:LR-bijection} implies that
there is a unique $\aslaw$ in each real degree $\alpha\in \wDelta^{+,\mathrm{re}}$, denoted by $\SL(\alpha)$,
and $|I|$ $\aslaws$ in each imaginary degree $\alpha\in \wDelta^{+,\mathrm{im}}$, denoted by
$\SL_1(\alpha),\ldots,\SL_{|I|}(\alpha)$, listed in the decreasing order.

The main result of this section is the following \emph{generalized Leclerc's} algorithm:

\begin{proposition}\label{prop:generalized Leclerc}
The $\aslaws$ (with respect to $\widehat{\fn}^+$) are determined inductively by the following rules:

\noindent
(a) For simple roots, we have $\SL(\alpha_i)=[i]$. For other real $\alpha\in \wDelta^{+,\mathrm{re}}$, we have:
\begin{equation}\label{eq:generalized Leclerc}
  \SL(\alpha) =
  \max\left\{\SL_*(\gamma_1)\SL_*(\gamma_2) \Big|
   \substack{\alpha=\gamma_1+\gamma_2,\, \gamma_k\in \wDelta^+\\ \SL_*(\gamma_1)<\SL_*(\gamma_2)\\
             [\sb[\SL_*(\gamma_1)],\sb[\SL_*(\gamma_2)]]\neq 0} \right\},
\end{equation}
where $\SL_*(\gamma)$ denotes $\SL(\gamma)$ for $\gamma\in \wDelta^{+,\mathrm{re}}$
and any of $\{\SL_k(\gamma)\}_{k=1}^{|I|}$ for $\gamma\in \wDelta^{+,\mathrm{im}}$.

\noindent
(b) For imaginary $\alpha\in \wDelta^{+,\mathrm{im}}$, the corresponding $|I|$ $\aslaws$ $\{\SL_k(\alpha)\}_{k=1}^{|I|}$
are the $|I|$ lexicographically largest words from the list as in the right-hand side of~\eqref{eq:generalized Leclerc}
whose standard bracketings are linearly independent.
\end{proposition}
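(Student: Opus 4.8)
The plan is to induct on the height $\hgt$ (declaring $\hgt(k\delta)=k\,\hgt(\delta)$), adapting the proof of Leclerc's algorithm (Proposition~\ref{prop:Leclerc algorithm}) to the affine setting; the base case consists of the simple roots, where $\SL(\alpha_i)=[i]$ is immediate. The two genuinely new features relative to finite type are the multiplicity $|I|$ of the imaginary root spaces recorded in~\eqref{eq:aff-dim} and the fact that a bracket $[\sb[\ell_1],\sb[\ell_2]]$ may vanish even when $\deg\ell_1+\deg\ell_2\in\wDelta^+$. Throughout I use the following reformulation of Definition~\ref{def:standard}(b) and Theorem~\ref{thm:standard Lyndon theorem}: in a fixed degree $\beta\in\wDelta^+$, the $\aslaws$ are produced by scanning the Lyndon words of degree $\beta$ in decreasing lexicographic order and retaining a word $\ell$ exactly when $\sb[\ell]$ is linearly independent from the standard bracketings of the previously retained (larger) words. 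By~\eqref{eq:aff-dim} this greedy scan keeps one word in each real degree and $|I|$ words in each imaginary degree, which matches the count asserted in the proposition.

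The first step is a structural lemma underlying both parts: if $\ell$ is an $\aslaw$ with costandard factorization $\ell=\ell_1\ell_2$ (Proposition~\ref{prop:cost.factor}), then $\ell_1<\ell_2$ are again $\aslaws$, with $\deg\ell_1,\deg\ell_2\in\wDelta^+$ and $\deg\ell_1+\deg\ell_2=\deg\ell$, and $\sb[\ell]=[\sb[\ell_1],\sb[\ell_2]]\neq 0$. Nonvanishing is automatic because $\sb[\ell]$ is a basis vector of $\widehat{\fn}^+$. To see that the factors are standard I argue by contradiction: if, say, $\ell_2$ were not standard, I would rewrite $\sb[\ell_2]$ as a combination of standard bracketings of strictly larger Lyndon words $m$ of the same degree and expand the outer commutator $[\sb[\ell_1],\sb[m]]$; using the triangularity~\eqref{eqn:upper} together with Lemma~\ref{lemma:lyndon}, the leading word of each such commutator is $\ell_1 m>\ell$, so $\sb[\ell]$ would lie in the span of standard bracketings of Lyndon words strictly larger than $\ell$, contradicting the standardness of $\ell$ (the case of $\ell_1$ non-standard is handled by the same leading-word analysis, as in Leclerc's proof). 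This lemma shows that \emph{every} $\aslaw$ occurs among the words listed on the right-hand side of~\eqref{eq:generalized Leclerc}, which gives the ``$\le$'' direction in both parts.

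For part (a) the real degree is one-dimensional by~\eqref{eq:aff-dim}, so $\SL(\alpha)$ is just the lexicographically largest Lyndon word of degree $\alpha$ carrying a nonzero standard bracketing. The structural lemma places $\SL(\alpha)$ among the candidates in~\eqref{eq:generalized Leclerc}, giving $\SL(\alpha)\le\max$. For the reverse inequality I prove the converse lemma generalizing Leclerc's: the lexicographically maximal candidate $w=\SL_*(\gamma_1)\SL_*(\gamma_2)$, which is Lyndon by Lemma~\ref{lemma:lyndon}, has costandard factorization exactly $\SL_*(\gamma_1)\,|\,\SL_*(\gamma_2)$, so that $\sb[w]=[\sb[\SL_*(\gamma_1)],\sb[\SL_*(\gamma_2)]]$ --- and this is nonzero precisely because of the constraint built into~\eqref{eq:generalized Leclerc}. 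Thus $w$ is a Lyndon word of degree $\alpha$ with nonzero bracketing, whence $w\le\SL(\alpha)$; combined with the previous inequality, $w=\SL(\alpha)$. Here the nonvanishing constraint is exactly what excludes spurious maximal products whose bracket collapses, a phenomenon that cannot occur in finite type.

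For part (b) I run the same greedy scan inside an imaginary degree $\alpha=k\delta$, where $\widehat{\fg}_{k\delta}=\fh\otimes t^k$ is $|I|$-dimensional by~\eqref{eqn:imaginary-roots}. I claim that scanning the candidate list of~\eqref{eq:generalized Leclerc} top-down and keeping words with linearly independent bracketings reproduces the greedy scan over \emph{all} Lyndon words of degree $k\delta$, hence recovers the $|I|$ $\aslaws$ $\SL_1(k\delta)>\dots>\SL_{|I|}(k\delta)$. The two scans agree for two reasons: every $\aslaw$ of degree $k\delta$ is a candidate (structural lemma) whose bracketing is independent of those of all larger Lyndon words, so it is retained once reached; and any candidate that is not standard has, by the triangularity~\eqref{eqn:upper}, its bracketing inside the span of bracketings of strictly larger $\aslaws$, all of which are themselves candidates retained earlier, so it is discarded. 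Since linear independence is a matroid, this top-down scan returns exactly the lexicographically largest independent family of size $|I|$, which is the formulation in the statement. I expect the main obstacle to be the two factorization lemmas --- that the costandard factors of an $\aslaw$ are standard, and that the maximal candidate factors in the obvious way --- both of which hinge on tracking, through~\eqref{eqn:upper} and the monotonicity $\ell_1<\ell_2\Rightarrow\ell_1\ell_2<\ell_2\ell_1$ of Lemma~\ref{lemma:lyndon}, that every bracket manipulation produces only words lexicographically larger than the one under consideration; the imaginary degrees then add only the bookkeeping of passing from a single comparison to the matroid-greedy statement.
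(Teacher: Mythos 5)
Your forward direction (the structural lemma that the costandard factors of an $\aslaw$ are again $\aslaws$ whose degrees are roots, so that $\SL(\alpha)$ occurs among the listed concatenations) and your matroid/greedy treatment of part (b) agree with the paper's proof, which is terser on both points. The genuine gap is in your reverse inequality for part (a). The paper proves that \emph{every} listed concatenation satisfies $\ell_1\ell_2\le\SL(\alpha)$: it expands the commutator $[\sb[\ell_1],\sb[\ell_2]]$ through the triangularity \eqref{eqn:upper} and Lemma~\ref{lemma:lyndon} as $\sum_{v\ge\ell_1\ell_2}y_v\cdot{_ve}$, rewrites this in the standard basis $\{e_v\}$, and compares with the fact that the nonzero commutator must lie in $\BC^\times\cdot\sb[\SL(\alpha)]$ because the real root space is one-dimensional (equations \eqref{eqn:fi}--\eqref{eqn:bracket-commutator-2}). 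You replace this with the assertion --- offered without proof --- that the maximal candidate $w=\SL_*(\gamma_1)\SL_*(\gamma_2)$ has its costandard factorization exactly at the juncture, whence $\sb[w]=[\sb[\SL_*(\gamma_1)],\sb[\SL_*(\gamma_2)]]\ne0$. As stated this is false: in $\hsl_2$ with $1<0$, the maximal listed word in degree $2\delta$ is $1100=\SL(\delta+\alpha_1)\SL(\alpha_0)$, a listed decomposition with juncture $110\,|\,0$ and nonzero commutator $[-2E_{12}t,E_{21}t]$, yet the costandard factorization of $1100$ is $1\,|\,100$, since the longest proper Lyndon suffix is $100$, not $0$. (This example sits in an imaginary degree, but your claim is about maximal candidates in general, and nothing in your part (a) argument uses reality of $\alpha$ to force the juncture.)

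The underlying conflation is between $\sb[\ell_1\ell_2]$, which by Definition~\ref{def:bracketing} is computed from the costandard factorization of the concatenation (Proposition~\ref{prop:cost.factor}), and the commutator $[\sb[\ell_1]$, $\sb[\ell_2]]$; the nonvanishing hypothesis built into \eqref{eq:generalized Leclerc} controls only the latter. Consequently your greedy reasoning proves only that $\SL(\alpha)$ is the largest listed word with \emph{nonzero standard bracketing}, whereas part (a) asserts it is the largest listed word outright: nothing in your argument excludes a listed $w>\SL(\alpha)$ with $\sb[w]=0$ but $[\sb[\ell_1],\sb[\ell_2]]\ne0$, and ruling this out is precisely what the paper's triangularity computation accomplishes. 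The repaired version of your lemma --- that \emph{some} decomposition realizing the maximum has its juncture at the costandard factorization --- is true only a posteriori, as a consequence of the proposition itself, so invoking it here is circular. To close the gap, substitute the triangularity bound on every listed concatenation for your ``converse lemma''; the rest of your write-up then goes through, and in particular your part (b) scan is sound as written, since there the discarded words are handled purely through the definition of standardness and the bracketings $\sb[w]$ themselves, with no appeal to the juncture claim.
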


\begin{remark}\label{rem:gen Lyndon rmk}
Since $[\widehat{\fg}_{a\delta},\widehat{\fg}_{b\delta}]=0$ for any $a,b>0$, we shall assume that
$\gamma_1,\gamma_2\in \wDelta^{+,\mathrm{re}}$ when applying part~(b). Thus, $\SL_1(\alpha)$ is given precisely
by~\eqref{eq:generalized Leclerc}, $\SL_2(\alpha)$ is the next largest word among the above concatenations
whose bracketing is not a multiple of $\sb[\SL_1(\alpha)]$, and so on, up to $\SL_{|I|}(\alpha)$ which is
the largest of the remaining concatenations whose standard bracketing is linearly independent with
$\{\sb[\SL_k(\alpha)]\}_{k=1}^{|I|-1}$.
\end{remark}

\begin{proof}[Proof of Proposition~\ref{prop:generalized Leclerc}]
(a) Consider the costandard factorization $\SL(\alpha)=\ell_1\ell_2$ as in~\eqref{eqn:cost.factor}. Then,
$\ell_1=\SL_*(\gamma_1), \ell_2=\SL_*(\gamma_2)$ for some $\gamma_1,\gamma_2\in \wDelta^+$ and $\ell_1<\ell_2$.
Finally, $\sb[\SL(\alpha)]\ne 0$ implies that $[\sb[\SL_*(\gamma_1)],\sb[\SL_*(\gamma_2)]]\ne 0$. Therefore,
$\ell_1\ell_2$ is an element from the right-hand side of~\eqref{eq:generalized Leclerc}. It thus remains
to show that $\SL(\alpha)$ is $\geq$ any concatenation $\SL_*(\gamma_1)\SL_*(\gamma_2)$ featuring in
the right-hand side of~\eqref{eq:generalized Leclerc}.

The proof of the latter is completely analogous to that of~\cite[Proposition 2.23]{NT}.
Consider any $\gamma_1, \gamma_2 \in \wDelta^+$ such that $\gamma_1+\gamma_2=\alpha$. Let us write
$\ell_1 = \SL_*(\gamma_1)$, $\ell_2 = \SL_*(\gamma_2)$, $\ell=\SL(\alpha)$. We may assume, without loss
of generality, that $\ell_1 < \ell_2$. Evoking the notations of Subsection~\ref{ssec:SL-words}, we have:
\begin{equation}\label{eqn:fi}
  \sb[\ell_k]=e_{\ell_k} = \sum_{v_k \geq \ell_k} c^{v_k}_{\ell_k} \cdot {_{v_k}e}
\end{equation}
$\forall\, k \in \{1,2\}$, due to the triangularity property \eqref{eqn:upper}.
Thus, due to the degree reasons (see~\cite[Footnote 2]{NT}), we get:
\begin{equation}\label{eqn:hi}
  \sb[\ell_1] \sb[\ell_2] = e_{\ell_1} e_{\ell_2} \, = \sum_{v \geq \ell_1\ell_2} x_v \cdot {_ve}
\end{equation}
for some coefficients $x_v$. As a consequence of $\ell_2\ell_1>\ell_1\ell_2$ (Lemma~\ref{lemma:lyndon}),
we also~get:
\begin{equation}\label{eqn:hi-2}
  \sb[\ell_2] \sb[\ell_1] = e_{\ell_2} e_{\ell_1} \, = \sum_{v \geq \ell_1\ell_2} x'_v \cdot {_ve}
\end{equation}
for some coefficients $x'_v$. Hence, we obtain the following formula for the commutator:
\begin{equation}\label{eqn:bracket-commutator-0}
  [\sb[\ell_1],\sb[\ell_2]]=[e_{\ell_1}, e_{\ell_2}] \, = \sum_{v \geq \ell_1\ell_2} y_v \cdot {_ve}
\end{equation}
for various coefficients $y_v$. Furthermore, we may restrict the sum above to standard $v$'s, since by
the very definition of this notion, any $_ve$ can be inductively written as a linear combination of $_ue$'s
for standard $u \geq v$. By the same reason, we may restrict the right-hand side of \eqref{eqn:upper} to
standard $v$'s, and conclude that $\{e_w\}_{w - \text{standard}}$ provide a basis of $U(\widehat{\fn}^+)$
which is upper triangular in terms of the basis $\{_we\}_{w - \text{standard}}$. With the above observations
in mind, \eqref{eqn:bracket-commutator-0} implies:
\begin{equation}\label{eqn:bracket-commutator-1}
  [\sb[\ell_1],\sb[\ell_2]]=[e_{\ell_1}, e_{\ell_2}] \ =
  \mathop{\sum_{v \geq \ell_1\ell_2}}_{v-\text{standard}} z_v \cdot e_v
\end{equation}
for various $z_v$. Meanwhile, the assumption $[\sb[\ell_1],\sb[\ell_2]]\ne 0$ and
$\widehat{\fg}_\alpha=\BC\cdot \sb[\ell]$ imply:
\begin{equation}\label{eqn:bracket-commutator-2}
  [\sb[\ell_1],\sb[\ell_2]]=[e_{\ell_1}, e_{\ell_2}] \in \BC^\times \cdot e_{\ell}.
\end{equation}
As $\{e_v\}_{v-\text{standard}}$ is a basis of $U(\widehat{\fn}^+)$,
comparing~(\ref{eqn:bracket-commutator-1},~\ref{eqn:bracket-commutator-2}) we obtain $\ell \geq \ell_1\ell_2$,
precisely as claimed above.

(b) The proof of part (b) is completely analogous to that of part (a), with the only difference that
we need to find $|I|$ $\aslaws$. Thus, we just use Definition~\ref{def:standard}(b) to complement
the above argument in the present setup.
\end{proof}


\subsection{Affine standard Lyndon words in type $A^{(1)}_1$}\label{ssec:sl2}
\

As the first simplest example, let us compute $\aslaws$ in the simplest case of $A^{(1)}_1$, which corresponds
to the affinization $\widehat{\fsl}_2$ of the unique rank $1$ simple Lie algebra $\fsl_2$. In this case:
there are two simple roots $\alpha_0,\alpha_1$ and $\delta=\alpha_0+\alpha_1$. The set of positive roots is
  $\widehat{\Delta}^+=\{k\delta+\alpha_1,k\delta+\alpha_0,(k+1)\delta|k\geq 0\}$.
Without loss of generality, we can assume that $1<0$, due to the $0\leftrightarrow 1$ symmetry.

\begin{proposition}\label{prop:sl2-case}
The $\aslaws$ for $\widehat{\fsl}_2$ with the order $1<0$ on the corresponding alphabet $\wI=\{0,1\}$ are:
\begin{itemize}[leftmargin=0.7cm]

\item[$\bullet$]
For $k\geq 1$, we have:
\begin{align}
  & \SL(k\delta+\alpha_1)= 1\underbrace{10}_{k \, \rtim} ,
    \label{eq:sl2-one} \\
  & \SL(k\delta+\alpha_0)= \underbrace{10}_{k \, \rtim}0 ,
    \label{eq:sl2-two} \\
  & \SL((k+1)\delta)= 1\underbrace{10}_{k \, \rtim}0 .
    \label{eq:sl2-three}
\end{align}

\item[$\bullet$]
For the remaining roots, we have:
\begin{equation}\label{eq:sl2-four}
  \SL(\alpha_1)= 1 \,,\quad
  \SL(\alpha_0)= 0 \,,\quad
  \SL(\delta)=10.
\end{equation}

\end{itemize}
\end{proposition}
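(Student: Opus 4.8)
The plan is to run the generalized Leclerc algorithm of Proposition~\ref{prop:generalized Leclerc} by strong induction on the height, while simultaneously keeping track of the standard bracketings. Since here $\theta=\alpha_1$, the isomorphism~\eqref{eq:affine-iso} realizes the generators as $e_1=e_1t^0$ and $e_0=f_1t^1$, and I will repeatedly use the $\fsl_2$-relations $[e_1,f_1]=h_1$, $[h_1,e_1]=2e_1$, $[h_1,f_1]=-2f_1$ (the central term in~\eqref{eq:loops} never contributes, as all powers of $t$ involved are positive). Abbreviate the three candidate families by $A_k=1(10)^k$, $B_k=(10)^k0$, $D_k=1(10)^{k-1}0$, so that the assertion reads $\SL(k\delta+\alpha_1)=A_k$, $\SL(k\delta+\alpha_0)=B_k$, $\SL(k\delta)=D_k$, with $A_0=1$, $B_0=0$, $D_1=10$.

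First I would establish the combinatorial backbone: each $A_k,B_k,D_k$ is a Lyndon word whose costandard factorization~\eqref{eqn:cost.factor} is
\[
  A_k=A_{k-1}\cdot 10,\qquad B_k=10\cdot B_{k-1},\qquad D_k=1\cdot B_{k-1}.
\]
Lyndon-ness follows inductively from Lemma~\ref{lemma:lyndon} once one checks the inequalities $A_{k-1}<10$, $10<B_{k-1}$, $1<B_{k-1}$ (all immediate from $1<0$), and the claim that the displayed suffixes are the \emph{longest} Lyndon proper suffixes reduces to the elementary observation that, under $1<0$, the only Lyndon words arising as proper suffixes of these words are $0$, $10$ and the $B_j$. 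Feeding these factorizations into Definition~\ref{def:bracketing} together with the bracket relations above yields, again by induction,
\[
  \sb[A_k]=(-2)^k e_1t^k,\qquad \sb[B_k]=(-2)^k f_1t^{k+1},\qquad \sb[D_k]=(-2)^{k-1}h_1t^k,
\]
which are all nonzero and confirm the degrees through~\eqref{eqn:real-roots}--\eqref{eqn:imaginary-roots}.

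Next I would carry out the induction itself. Every positive real root of $\widehat{\fsl}_2$ is of the form $k\delta+\alpha_1$ or $k\delta+\alpha_0$, and the sum of two real roots is never a real root; hence the only decompositions of $k\delta+\alpha_1$ (resp.\ $k\delta+\alpha_0$) into two positive roots are $b\delta+((k-b)\delta+\alpha_1)$ (resp.\ $b\delta+((k-b)\delta+\alpha_0)$) for $1\le b\le k$, i.e.\ the pairs $\{D_b,A_{k-b}\}$ (resp.\ $\{D_b,B_{k-b}\}$). Applying Proposition~\ref{prop:generalized Leclerc}(a) with the inductively known $\SL$-values, the task is to show that the lexicographic maximum of the resulting concatenations is attained exactly at $b=1$, giving $A_{k-1}\cdot 10=A_k$ (resp.\ $10\cdot B_{k-1}=B_k$), whose nonvanishing bracket is the computation above. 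For the imaginary root $k\delta$ ($k\ge2$), the Remark following Proposition~\ref{prop:generalized Leclerc} restricts attention to decompositions into two real roots $\{A_a,B_c\}$ with $a+c=k-1$; a pleasant feature special to $\widehat{\fsl}_2$ is that every such ordered concatenation collapses to the single word $D_k$, with pairwise bracket $[\sb[A_a],\sb[B_c]]=(-2)^{k-1}h_1t^k\neq0$, so Proposition~\ref{prop:generalized Leclerc}(b) (with $|I|=1$) forces $\SL(k\delta)=D_k$. The base cases $\SL(\alpha_1)=1$, $\SL(\alpha_0)=0$, $\SL(\delta)=10$ are immediate.

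The main obstacle is the maximality claim in the real case. The subtlety is that the ``greedy'' decomposition $\alpha_1+k\delta$ (the case $b=k$, producing $1\cdot D_k$) is \emph{not} the largest: because $1<0$, a concatenation is lexicographically large precisely when the alternating pattern $10\,10\cdots$ is preserved as long as possible, and attaching $D_b$ with $b\ge2$ injects a $1$, namely the second letter of $D_b=1(10)^{b-1}0$, exactly where $A_k=1(10)^k$ carries a $0$. The crux of the write-up is therefore a position-by-position comparison verifying that for $b\ge2$ the concatenation built from $\{D_b,A_{k-b}\}$ first disagrees with $A_k$ at that letter and is hence strictly smaller, while $b=1$ reproduces $A_k$ exactly. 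The $B_k$-case is analogous but easier, since for $b\ge2$ the minimal word of the pair is $D_b$, which already begins with $11$ and therefore loses to $B_k=(10)^k0$ at the second letter.
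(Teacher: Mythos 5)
Your proposal is correct and takes essentially the same route as the paper's proof: induction on $k$ via the generalized Leclerc algorithm of Proposition~\ref{prop:generalized Leclerc}, using the same decompositions (imaginary $+$ real for the real roots, and pairs $\{a\delta+\alpha_1,\,b\delta+\alpha_0\}$ collapsing to the single word $1\underbrace{10}_{k \,\rtim}0$ for the imaginary root) and the same bracketing values $(-2)^k e_1 t^k$, $(-2)^k f_1 t^{k+1}$, $(-2)^{k-1} h_1 t^k$. The only difference is presentational: you make explicit the costandard factorizations and the letter-by-letter lexicographic comparison that the paper compresses into ``Clearly, the lexicographically largest word is \ldots''.
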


\begin{proof}
Formulas~\eqref{eq:sl2-four} are obvious, while the proof of~\eqref{eq:sl2-one}--\eqref{eq:sl2-three}
will proceed by induction on $k$. The base $k=1$ case is easy. We shall now prove the induction step, just
by using the generalized Leclerc's algorithm from Proposition~\ref{prop:generalized Leclerc}.

1) Root $\alpha=k\delta+\alpha_1$.
Any decomposition $\alpha=\gamma_1+\gamma_2$ has the following form:
  $\{\gamma_1,\gamma_2\}=\{a\delta, b\delta+\alpha_1 \,|\, a+b=k, 1\leq a\leq k\}$.
By the induction hypothesis:
  \[ \SL(b\delta+\alpha_1)=1\underbrace{10}_{b \, \rtim} < \, 1\underbrace{10}_{(a-1) \, \rtim}0=\SL(a\delta). \]
Following~\eqref{eq:generalized Leclerc}, consider the lexicographically largest word among all possible concatenations
$1\underbrace{10}_{b \, \rtim}1\underbrace{10}_{(a-1) \, \rtim}0$, which is $1\underbrace{10}_{k \, \rtim}$.
Let us show by induction on $k$ that its standard bracketing is $(-2)^kE_{12} t^k$, thus completing the proof of~\eqref{eq:sl2-one}:
\begin{equation*}
  \sb[1\underbrace{10}_{k \, \rtim}]=[\sb[1\underbrace{10}_{(k-1) \, \rtim}],\sb[10]]=
  [(-2)^{k-1}E_{12} t^{k-1},(E_{11}-E_{22}) t]=(-2)^kE_{12} t^k.
\end{equation*}

2) Root $\alpha=k\delta+\alpha_0$. Any decomposition $\alpha=\gamma_1+\gamma_2$ has the following form:
$\{\gamma_1,\gamma_2\}=\{a\delta, b\delta+\alpha_0 \,|\, a+b=k, 1\leq a\leq k\}$. As in 1), one combines the
inductive hypothesis with~\eqref{eq:generalized Leclerc} to find: $\SL(\alpha)=\underbrace{10}_{k \, \rtim}0$
with the standard bracketing
\begin{equation*}
  \sb[\underbrace{10}_{k \, \rtim}0]=(-2)^kE_{21}t^{k+1}.
\end{equation*}

3) Let us now treat the imaginary root $\alpha=(k+1)\delta$. As $\rk(\fsl_2)=1$, there is
only one $\aslaw$ in degree $\alpha$, which can be found by~\eqref{eq:generalized Leclerc}.
Any decomposition $\alpha=\gamma_1+\gamma_2$ that contributes into $\SL(\alpha)$ is of the~form:
$\{\gamma_1,\gamma_2\}=\{a\delta+\alpha_1, b\delta+\alpha_0 \,|\, a+b=k, 0\leq a\leq k\}$.
By the induction hypothesis:
  \[ \SL(a\delta+\alpha_1)=1\underbrace{10}_{a \, \rtim}<\underbrace{10}_{b \, \rtim}0=\SL(b\delta+\alpha_0). \]
Following~\eqref{eq:generalized Leclerc}, consider the lexicographically largest word among all the corresponding
concatenations $\SL(a\delta+\alpha_1)\SL(b\delta+\alpha_0)=1\underbrace{10}_{k \, \rtim}0$, which completes
the proof of~\eqref{eq:sl2-three}. Let us evaluate its standard bracketing:
\begin{equation*}
  \sb[1\underbrace{10}_{k \, \rtim}0]=[\sb[1],\sb[\underbrace{10}_{k \, \rtim}0]]=
  [E_{12},(-2)^kE_{21}t^{k+1}]=(-2)^k (E_{11}-E_{22})t^{k+1}.
\end{equation*}
This completes the proof of the induction step.
\end{proof}


\medskip

\section{Affine standard Lyndon words in type $A^{(1)}_n$ for $n\geq 2$}\label{sec:A-type}

In this section, we describe $\aslaws$ in affine type $A^{(1)}_n$ for $n\geq 2$ and any order on
$\wI=\{0,1,2,\ldots,n\}$. First, we treat the simplest case (of the \emph{standard order}) to which
Proposition~\ref{prop:generalized Leclerc} can be easily applied.
We then crucially utilize the convexity property of Proposition~\ref{prop:fin.convex} to derive
the structure of $\aslaws$ for an arbitrary order on~$\wI$.


\subsection{Standard order}\label{ssec:standard}
\

We start by computing all $\aslaws$ for type $A^{(1)}_n$ with
\begin{equation}\label{eq:stand-order}
  \mathrm{the}\ \textit{standard}\ \mathrm{order\ on}\ \wI\colon \quad 1<2<3<\dots<n<0.
\end{equation}
There are $n+1$ simple roots $\alpha_0,\alpha_1,\ldots,\alpha_n$, and $\delta=\alpha_0+\alpha_1+\dots+\alpha_n$.
It is convenient to place the letters of the alphabet $\wI=\{0,1,2,\ldots,n\}$ on a circle counterclockwise.
For any counterclockwise oriented arch from $i$ to $j$, we define
\begin{equation}\label{eq:arch-root}
  \alpha_{i \to j}:=\alpha_i+\alpha_{i+1}+\dots+\alpha_j\in Q.
\end{equation}
Using this notation, the positive affine roots can be explicitly described as follows:
\begin{equation}
  \wDelta^+=\big\{k\delta+\alpha_{i \to j}, (k+1)\delta \, \big|\, k\geq 0 \,,\, i,j\in \wI \,,\, j\ne \overline{i-1}\big\}.
\end{equation}
Here, for any $k\in \BZ$ we define $\overline{k}\in \wI$ via:
\begin{equation}\label{eq:residue}
  \overline{k}:=k \ \mathrm{mod}\ (n+1).
\end{equation}
We also use $[i\to j)$ to denote all letters on the arch from $i$ (included) to $j$ (excluded):
\begin{equation}\label{eq:arch-elements}
  [i\to j) := \big\{i,\overline{i+1},\ldots,\overline{j-1}\big\}.
\end{equation}

\begin{theorem}\label{thm:sln-standard}
The $\aslaws$ for $\widehat{\fsl}_{n+1}$ with the standard order $1<2<\dots<n<0$ on the corresponding
alphabet $\wI=\{0,1,\ldots,n\}$ are as follows:
\begin{itemize}[leftmargin=0.7cm]

\item[$\bullet$]
For $k\geq 1$, we have:
\begin{align}
  & \SL(k\delta+\alpha_{i \to j}) = \underbrace{10n \ldots i\, 23 \ldots \overline{i-1}}_{k \, \rtim} i\,\overline{i+1} \ldots j \,,
    \quad \mathrm{for} \ 2<i\leq j\leq 0 ,
    \label{eq:stand-one} \\
  & \SL(k\delta+\alpha_2)=\underbrace{10n \ldots 32}_{k \, \rtim} 2,
    \label{eq:stand-two} \\
  & \SL(k\delta+\alpha_{2 \to j})=
    \begin{cases}
      \underbrace{10n \ldots 32}_{\frac{k}{2} \, \rtim} 2\underbrace{10n \ldots 32}_{\frac{k}{2} \, \rtim} 34 \ldots j
        & \mathrm{if}\ 2\mid\, k\\
      \underbrace{10n \ldots 32}_{\frac{k+1}{2} \, \rtim} 34 \ldots j\underbrace{10n \ldots 32}_{\frac{k-1}{2} \, \rtim}2
        & \mathrm{if}\ 2\nmid\, k
    \end{cases} \,, \quad \mathrm{for} \ 2<j\leq 0 ,
    \label{eq:stand-three} \\
  & \SL(k\delta+\alpha_{1\to i})=
    123 \ldots n\underbrace{1023 \ldots n}_{(k-1) \, \rtim} 1023 \ldots i \,, \quad \mathrm{for}\ 1\leq i<0,
    \label{eq:stand-four}
\end{align}
\begin{multline}\label{eq:stand-five}
  \SL(k\delta+\alpha_{j \to i})=\SL(k\delta + \alpha_{j \to 0} + \alpha_{1 \to i}) =
    \hfill \mathrm{for} \ i<\overline{i+1}<j \\
  10n \ldots j\, 23 \ldots \overline{j-2}\, \underbrace{10n \ldots \overline{j-1}\, 23 \ldots \overline{j-2}}_{(k-1) \, \rtim}
  10n \ldots \overline{j-1}\, 23 \ldots i ,
\end{multline}
\begin{multline}\label{eq:stand-six}
  \SL_n((k+1)\delta)=
    123 \ldots n \underbrace{1023 \ldots n}_{k \, \rtim}0,\\
  \SL_r((k+1)\delta)=
    10n \ldots \overline{r+2}\, 23 \ldots r
    \underbrace{10n \ldots (r+1)23 \ldots r}_{k \, \rtim}(r+1) \,,\ \mathrm{for}\ r<n .
\end{multline}

\item[$\bullet$]
For the remaining roots, we have:
\begin{align}
  & \SL(\alpha_{i \to j})=i(i+1) \ldots j
    \,, \quad \mathrm{for} \ i\leq j \ \mathrm{and}\ (i,j)\ne (1,0) ,
    \label{eq:stand-seven} \\
  & \SL(\alpha_{j \to i})=\SL(\alpha_{j \to 0} + \alpha_{1 \to i}) = 10n \ldots j\, 23 \ldots i
    \,, \quad \mathrm{for} \ i<\overline{i+1}<j ,
    \label{eq:stand-eight} \\
  & \SL_r(\delta)=10 \ldots \overline{r+2}\, 23 \ldots \overline{r+1} \,,\quad \mathrm{for}\ 1\leq r\leq n .
    \label{eq:stand-nine}
\end{align}

\end{itemize}
\end{theorem}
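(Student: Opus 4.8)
The plan is to prove Theorem~\ref{thm:sln-standard} by induction on the height $\hgt$, organized by the imaginary degree $k$, mirroring the seven-part case analysis in our proof of Theorem~\ref{thm:sl3-case}. The base of the induction is the finite list of words in~\eqref{eq:stand-seven}--\eqref{eq:stand-nine}: the non-wrapping arches $\SL(\alpha_{i\to j})=i(i+1)\ldots j$ follow at once from finite-type Leclerc's algorithm (Proposition~\ref{prop:Leclerc algorithm}) applied to $\fsl_{n+1}$, the wrapping arches~\eqref{eq:stand-eight} are a short direct computation, and the $n$ words $\SL_r(\delta)$ of~\eqref{eq:stand-nine} are produced by running the imaginary rule of Proposition~\ref{prop:generalized Leclerc}(b) once. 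Within each level $k\geq 1$ of the inductive step I would treat the real families~\eqref{eq:stand-one}--\eqref{eq:stand-five} in order of increasing height, so that shorter-arch words of the same $k$ are already available when needed (for instance $\SL(k\delta+\alpha_2)$ is consumed while handling $k\delta+\alpha_{2\to j}$), and finish with the imaginary family~\eqref{eq:stand-six}.

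For a fixed real root $\alpha$ of imaginary degree $k$, the first task is to enumerate via~\eqref{eqn:hat plus} all decompositions $\alpha=\gamma_1+\gamma_2$ into two positive affine roots, distinguishing those that split the arch without crossing $\alpha_0$ from the wrap-around decompositions through $\alpha_0$; these fall into a bounded number of arithmetic families indexed by how $k$ is partitioned. Substituting the words $\SL_*(\gamma_1),\SL_*(\gamma_2)$ supplied by the induction hypothesis, ordering each pair by Lemma~\ref{lemma:lyndon}, and forming the concatenations of~\eqref{eq:generalized Leclerc}, the combinatorial heart of the argument is to prove that the claimed word is the lexicographically largest admissible concatenation. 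Here the recurring $\delta$-degree block --- which for~\eqref{eq:stand-one} is $10n\ldots i\,23\ldots(i-1)$ and for~\eqref{eq:stand-four} is $1023\ldots n$ --- plays the structural role that $102$ played for $\widehat{\fsl}_3$, and comparisons are governed by the block structure together with the extremal positions of the minimal letter $1$ and the maximal letter $0$ in the order~\eqref{eq:stand-order}. The parity split in~\eqref{eq:stand-three} and~\eqref{eq:stand-six}, and the residue bookkeeping~\eqref{eq:residue} in the wrap-around family~\eqref{eq:stand-five}, are precisely the $\widehat{\fsl}_3$ phenomena of~\eqref{eq:sl3-six}--\eqref{eq:sl3-seven} replayed over $n+1$ letters.

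The second task, for each identified word, is to verify that its standard bracketing $\sb[\cdot]$ is nonzero; I would carry this out in the loop realization~\eqref{eq:affine-iso} $\widehat{\fsl}_{n+1}\cong\fsl_{n+1}\otimes\BC[t,t^{-1}]\oplus\BC\mathsf{c}$, under which $\sb[i]=E_{i,i+1}$ for $1\leq i\leq n$ and $\sb[0]=E_{n+1,1}t$. One first records that each $\delta$-block brackets to a Cartan element $h\otimes t$ of $\widehat{\fg}_\delta=\fh\otimes t$, and then peels off the costandard factorization~\eqref{eqn:cost.factor} to obtain a clean recursion $\sb[YB]=[\sb[Y],\sb[B]]$ with $B$ the trailing block, exactly as in the displayed $\widehat{\fsl}_3$ computations; for every real root this evaluates to a single off-diagonal matrix unit $\pm E_{ab}t^m$ times a nonzero scalar, whence the nonvanishing. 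These matrix computations are mechanical once the block evaluations are fixed.

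The main obstacle is the imaginary family~\eqref{eq:stand-six}: Proposition~\ref{prop:generalized Leclerc}(b) demands the $n$ lexicographically largest admissible concatenations whose bracketings are \emph{linearly independent} in the $n$-dimensional imaginary root space $\fh\otimes t^{k+1}$ of~\eqref{eqn:imaginary-roots}. I would show that the $n$ words of~\eqref{eq:stand-six}, indexed by $r\in\{1,\ldots,n\}$, bracket to Cartan elements spanning $\fh\otimes t^{k+1}$, and separately that no lexicographically larger admissible concatenation arises from the remaining decompositions of $(k+1)\delta$. Controlling this maximality-plus-independence simultaneously --- verifying both that each $\SL_r$ is genuinely the next available word and that its bracketing escapes the span of the previously chosen ones --- is the most delicate point and the place where the bookkeeping is most error-prone, even though conceptually it remains a direct, if lengthy, iteration of the $\widehat{\fsl}_3$ method.
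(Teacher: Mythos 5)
Your plan follows essentially the same route as the paper's proof: induction on height organized in $\delta$-blocks, the generalized Leclerc algorithm of Proposition~\ref{prop:generalized Leclerc} applied to an enumeration of all decompositions $\alpha=\gamma_1+\gamma_2$, lexicographic maximization of the resulting concatenations built from the recurring $\delta$-block, nonvanishing of standard bracketings computed in the loop realization~\eqref{eq:affine-iso} as multiples of matrix units $E_{ab}t^m$ (Cartan elements in imaginary degrees), and, for~\eqref{eq:stand-six}, the iterative selection of the next largest candidate whose bracketing escapes the span of the previously chosen ones --- exactly the mechanism the paper carries out with the words $\ell_{1j}^{(a)}, \ell_{2j}^{(a)}, \ell_{3ji}^{(a)}$. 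The only deviations are harmless shortcuts on either side (the paper proves~\eqref{eq:stand-seven} by the costandard-factorization argument rather than citing finite-type Leclerc, disposes of~\eqref{eq:stand-nine} by noting there are exactly $n$ candidate concatenations in degree $\delta$, and prunes the decomposition list in the inductive step via a threshold word), none of which changes the architecture you describe.
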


\begin{proof}
The proof will proceed by induction on the height $\hgt(\alpha)$. Let $h=\hgt(\delta)=n+1$ be
the \emph{Coxeter number} of $\fsl_{n+1}$. The base of induction is $\hgt(\alpha)<2h$, that is,
$k=0,1$ cases for real roots $k\delta+\alpha_{i \to j}$ and $k=0$ case for imaginary roots $(k+1)\delta$.

\medskip
\noindent
\underline{Base of Induction (part I)}

First, let us verify~\eqref{eq:stand-seven}--\eqref{eq:stand-nine} and find bracketings of the corresponding words.

\medskip
\noindent
$\bullet$ Proof of~\eqref{eq:stand-seven}.

Consider the costandard factorization $\ell=\ell_1\ell_2$ of any Lyndon word $\ell$ with $\deg \ell=\alpha_{i \to j}$.
As $i<i+1$ are the two smallest letters of $\ell$, the word $\ell_1$ starts with $i$ and $\ell_2$ starts with $i+1$.
If furthermore $\ell$ is standard Lyndon, so is $\ell_1$, hence, $\deg \ell_1\in \wDelta^+$. For degree reasons,
this is only possible if $\ell_1=i$ and $\deg\ell_2=\alpha_{(i+1)\to j}$. Arguing by induction on the height of
$\alpha_{i\to j}$, we thus immediately derive the desired formula~\eqref{eq:stand-seven}. Moreover, we also
inductively get the explicit formula for the corresponding standard bracketing:
\begin{equation*}
  \sb[\SL(\alpha_{i \to j})] = \sb[i(i+1)\ldots j]=[\sb[i],\sb[(i+1)\ldots j]]=
  \begin{cases}
     E_{i,j+1}t^0 & \ \mathrm{if}\ j\leq n \\
     E_{i,1}t   & \ \mathrm{if}\ j=0
  \end{cases}.
\end{equation*}

\noindent
\textbf{Notation:} Henceforth, we shall use the matrix $E_{0,p}$ to denote $E_{n+1,p}$.

\medskip
\noindent
$\bullet$ Proof of~\eqref{eq:stand-eight} for $i=1$.

In this case, we shall rather use~\eqref{eq:generalized Leclerc} and argue by induction on the height of
$\alpha_{j \to 1}$ (i.e.\ a descending induction of $j\in \wI$). The possible decompositions of $\alpha_{j\to 1}$
into the (unordered) sum of two positive roots are as follows:
\begin{equation*}
  \alpha_{j\to 1}=\alpha_{j\to k} + \alpha_{\overline{k+1}\to 1} \quad (j\leq k\leq n) \,,\qquad
  \alpha_{j\to 1}=\alpha_{j\to 0} + \alpha_1.
\end{equation*}
Combining the induction hypothesis with formula~\eqref{eq:stand-seven}, we get the following list of
concatenated words featuring in the right-hand side of~\eqref{eq:generalized Leclerc} for $\alpha=\alpha_{j \to 1}$:
\begin{equation}\label{eq:list-stand-eight}
  10n \ldots \overline{k+1}\, j\, \overline{j+1} \ldots k \quad (j\leq k\leq 0).
\end{equation}
Clearly, $10n \ldots j$ is the lexicographically largest word from this list~\eqref{eq:list-stand-eight}.
Let us evaluate its standard bracketing:
\begin{equation*}
  \sb[10n \ldots j] = [\sb[10n \ldots \overline{j+1}],\sb[j]]=[(-1)^{n-j-1}E_{j+1,2}t,E_{j,j+1}]=(-1)^{n-j}E_{j,2}t,
\end{equation*}
where we use the induction hypothesis for the value of $\sb[10n \ldots (j+1)]$. We thus obtain
$\SL(\alpha_{j\to 1})=10n \ldots j$ as claimed in~\eqref{eq:stand-eight}, since the bracketing is nonzero.

\medskip
\noindent
$\bullet$ Proof of~\eqref{eq:stand-eight} for $i>1$.

In the present case, we can argue alike in our verification of~\eqref{eq:stand-seven}. Consider the costandard
factorization $\SL(\alpha_{j \to i})=\ell_1\ell_2$. Since $1<2$ are the two smallest letters, $\ell_1$ starts
with $1$ and $\ell_2$ starts with $2$. Moreover, we have $\deg \ell_1,\deg \ell_2\in \wDelta^+$. For degree reasons,
this is only possible if $\deg\ell_1=\alpha_{j \to 1}$ and $\deg\ell_2=\alpha_{2 \to i}$. We thus have
$\ell_1=10n \ldots j$ and $\ell_2=23 \ldots i$ by above, and~\eqref{eq:stand-eight} follows. Furthermore:
\begin{equation*}
  \sb[\SL(\alpha_{j \to i})] = \sb[10n \ldots j\, 23\ldots i]= [\sb[10n \ldots j],\sb[23\ldots i]]=(-1)^{n-j}E_{j,i+1}t.
\end{equation*}

\noindent
$\bullet$ Proof of~\eqref{eq:stand-nine}.

Let us now treat the case of the smallest imaginary root $\delta$. The possible decompositions of
$\delta$ into the (unordered) sum of two positive roots are as follows:
\begin{equation*}
  \delta=\alpha_{1\to i} + \alpha_{\overline{i+1}\to 0} \ \ (1\leq i\leq n) \,, \qquad
  \delta=\alpha_{i\to j}+ \alpha_{\overline{j+1}\to (i-1)} \ \ (2\leq i\leq j\leq n).
\end{equation*}
Using already verified formulas~\eqref{eq:stand-seven} and~\eqref{eq:stand-eight}, we thus get the following list
of concatenated words featuring in the right-hand side of~\eqref{eq:generalized Leclerc} for $\alpha=\delta$:
\begin{equation*}
  12 \ldots i\, \overline{i+1} \ldots n0 \,, \qquad
  10n \ldots \overline{j+1}\, 23 \ldots (i-1)i(i+1) \ldots j \quad (2\leq j\leq n).
\end{equation*}
Since this list contains exactly $n$ different words (we note the independence of $i$), all of them are
precisely $\SL_1(\delta), \ldots,\SL_n(\delta)$. Ordering them lexicographically, we derive the desired
formula~\eqref{eq:stand-nine}. Let us compute their standard bracketings:
\begin{equation}\label{eq:delta-brack-standard}
\begin{split}
  & \sb[\SL_r(\delta)]=\sb[10 \ldots \overline{r+2}\, 23 \ldots \overline{r+1}]=
    [\sb[10 \ldots \overline{r+2}],\sb[23 \ldots \overline{r+1}]]=\\
  & \quad [(-1)^{n-r}E_{r+2,2}t,E_{2,r+2}]=(-1)^{n-r+1}(E_{22}-E_{r+2,r+2})t \quad \mathrm{if}\ r\leq n-1,\\
  & \sb[\SL_n(\delta)]=\sb[123 \ldots n0]=[\sb[1],\sb[23 \ldots n0]]=(E_{11}-E_{22})t.
\end{split}
\end{equation}

\noindent
\underline{Base of Induction (part II)}

As a continuation of the induction base, let us now verify~\eqref{eq:stand-one}--\eqref{eq:stand-five} for $k=1$.

\medskip
\noindent
$\bullet$ Proof of~\eqref{eq:stand-one} for $k=1$.

We verify the formula for $\SL(\delta+\alpha_{i\to j})$ with $2<i\leq j$ by induction on $\hgt(\alpha_{i\to j})$.
(1) The base of induction is $i=j$. The possible decompositions of $\delta+\alpha_i$ into the (unordered) sum of
two positive roots are as follows:
\begin{equation}\label{eq:split-stand-one-one}
  \delta+\alpha_i=(\delta)+(\alpha_i) \,,\qquad
  \delta+\alpha_i=\alpha_{i\to \jmath} + \alpha_{\overline{\jmath+1} \to i} \quad (\jmath \ne i,\overline{i-1}).
\end{equation}
Using already verified formulas~\eqref{eq:stand-seven}--\eqref{eq:stand-nine}, we get the following list of
concatenated words featuring in the right-hand side of~\eqref{eq:generalized Leclerc} for $\alpha=\delta+\alpha_i$:
\begin{equation}\label{eq:list-stand-one-one}
\begin{split}
  & 10n \ldots i\, 23 \ldots \overline{i-1}\, i \,,\quad 10n \ldots \overline{i+1}\, 23\ldots i\,i ,\\
  & 10n \ldots \overline{\jmath+1}\, 23 \ldots i\,i(i+1) \ldots \jmath \quad \mathrm{for}\ i<\jmath \leq n,\\
  & 10n \ldots i\, 23 \ldots \jmath\, (\jmath+1) \ldots i \quad \mathrm{for}\ 1\leq \jmath < \overline{i-1},\\
  & 12\ldots i\,i\,\overline{i+1}\ldots 0.
\end{split}
\end{equation}
Here, the two words in the first line correspond to the fact that $[\sb[\SL_r(\delta)],\sb[i]]\ne 0$ only for
$\overline{r+2}=i,i-1$, due to~\eqref{eq:delta-brack-standard}, while the last three lines just correspond to the cases $i<\jmath\leq n$,
$1\leq \jmath<\overline{i-1}$, and $\jmath=0$ in~\eqref{eq:split-stand-one-one}. Clearly, $10n \ldots i\, 23 \ldots \overline{i-1}\,i$
is the lexicographically largest word from the list~\eqref{eq:list-stand-one-one}. Therefore, $\SL(\delta+\alpha_i)$ is
indeed given by~\eqref{eq:stand-one} as the corresponding standard bracketing does not vanish:
\begin{equation*}
  \sb[\SL(\delta+\alpha_i)] = \sb[10n \ldots i\, 23 \ldots \overline{i-1}\, i] =
  \begin{cases}
    (-1)^{n-i}E_{i,i+1}t & \ \mathrm{if}\ 2<i\leq n \\
    -E_{n+1,1}t^2   & \ \mathrm{if}\ i=0
  \end{cases}.
\end{equation*}
(2) Let us now prove the induction step: compute $\SL(\delta+\alpha_{i\to j})$ for $\hgt(\alpha_{i\to j})=p+1$
using the formulas for $\SL(\delta+\alpha_{\iota\to \jmath})$ with $\hgt(\alpha_{\iota\to \jmath})\leq p$. The
possible decompositions of $\delta+\alpha_{i\to j}$ into the (unordered) sum of two positive roots are as follows:
\begin{equation}\label{eq:split-stand-one-two}
\begin{split}
  & \delta+\alpha_{i\to j}=(\delta) + (\alpha_{i\to j}) \\
  & \delta+\alpha_{i\to j}=(\delta+\alpha_{i \to \jmath}) + (\alpha_{\overline{\jmath+1}\to j})
    \quad \mathrm{for}\ \jmath\in [i\to j) \\
  & \delta+\alpha_{i\to j}=(\alpha_{i \to \jmath}) + (\delta+\alpha_{\overline{\jmath+1}\to j})
    \quad \mathrm{for}\ \jmath\in [i\to j) \\
  & \delta+\alpha_{i\to j} = (\alpha_{i\to \jmath}) + (\alpha_{\overline{\jmath+1}\to j})
    \quad \mathrm{for}\ \jmath\in \big[\overline{j+1}\to (i-1)\big)
\end{split}
\end{equation}
The corresponding list of concatenations is as follows:
\begin{equation}\label{eq:list-stand-one-two}
\begin{split}
  & 10n \ldots i\,23 \ldots (i-1)i\ldots j \,, \quad
    10n \ldots \overline{j+1}\, 23\ldots j\, i\ldots j ,\\
  & 10n \ldots i\,23 \ldots (i-1)i\ldots \jmath\, \overline{\jmath+1} \ldots j
    \quad \mathrm{for}\ \jmath\in [i\to j),\\
  & 10n \ldots \overline{\jmath+1}\, 23 \ldots \ldots j\, i\, \overline{i+1} \ldots \jmath
    \quad \mathrm{for}\ \jmath\in [i\to j),\\
  & 10n \ldots \overline{\jmath+1}\, 23 \ldots j\, i(i+1) \ldots j\, \overline{j+1} \ldots \jmath
    \quad \mathrm{for}\ j<\jmath \leq n,\\
  & 10n \ldots i\, 23\ldots \jmath\, \overline{\jmath+1} \ldots j
    \quad \mathrm{for}\ 1\leq\jmath <i-1,\\
  & 123 \ldots j\, i\, \overline{i+1}\ldots 0.
\end{split}
\end{equation}
The two words in the first line correspond to the fact that $[\sb[\SL_r(\delta)],\sb[\SL(\alpha_{i\to j})]]\ne 0$
only when $\overline{r+2}=i,\overline{j+1}$, while the words from the last three lines correspond to the cases
$j<\jmath\leq n$, $1\leq \jmath<i-1$, and $\jmath=0$ in the last decomposition
of~\eqref{eq:split-stand-one-two}. Clearly, $10n \ldots i\, 23 \ldots j$ is the lexicographically largest
word from the list~\eqref{eq:list-stand-one-two}. Therefore, $\SL(\delta+\alpha_{i\to j})$ is indeed given
by~\eqref{eq:stand-one} as the corresponding standard bracketing does not vanish:
\begin{equation*}
  \sb[\SL(\delta+\alpha_{i\to j})]=\sb[10n \ldots i\, 23 \ldots j]=
  \begin{cases}
    (-1)^{n-i}E_{i,j+1}t & \ \mathrm{if}\ 2<i<j\leq n \\
    (-1)^{n-i}E_{i,1}t^2 & \ \mathrm{if}\ 2<i<j=0
  \end{cases}.
\end{equation*}

\noindent
$\bullet$ Proof of~\eqref{eq:stand-two} for $k=1$.

The possible decompositions of $\delta+\alpha_2$ into the (unordered) sum of two positive roots are as follows:
\begin{equation}\label{eq:split-stand-two}
  \delta+\alpha_2=(\delta)+(\alpha_2) \,,\qquad
  \delta+\alpha_2=\alpha_{2\to \jmath} + \alpha_{\overline{\jmath+1} \to 2} \quad (\jmath \ne 1,2).
\end{equation}
Thus, the concatenated words in the right-hand side of~\eqref{eq:generalized Leclerc} for $\alpha=\delta+\alpha_2$ are:
\begin{equation}\label{eq:list-stand-two}
\begin{split}
  & 10n \ldots \overline{r+2}\, 23 \ldots \overline{r+1}\,2 \quad \mathrm{for}\ 1\leq r\leq n,\\
  & 10n \ldots \overline{\jmath+1}\,223 \ldots \jmath \quad (2<\jmath \leq n) \,,\qquad
    1223\ldots n0.
\end{split}
\end{equation}
Here, the $n$ words in the first line correspond to the fact that $[\sb[\SL_r(\delta)],\sb[2]]\ne 0$ for all
$1\leq r\leq n$, according to~\eqref{eq:delta-brack-standard}. Clearly, $10n \ldots 322$ is the lexicographically
largest word from the list~\eqref{eq:list-stand-two}. Therefore, $\SL(\delta+\alpha_2)$ is indeed given
by~\eqref{eq:stand-two} as the corresponding standard bracketing does not vanish:
\begin{equation*}
  \sb[\SL(\delta+\alpha_2)]=\sb[10n \ldots 322]=[\sb[10n \ldots 32],\sb[2]]=2(-1)^nE_{23}t.
\end{equation*}

\noindent
$\bullet$ Proof of~\eqref{eq:stand-three} for $k=1$.

Let us prove by induction on $j$ that:
\begin{equation}\label{eq:stand-three-k1}
  \SL(\delta+\alpha_{2\to j})=10n \ldots 3234 \ldots j 2 \quad \mathrm{for}\ 2\leq j\leq 0.
\end{equation}
(1) The base of induction is $j=2$, for which the result was just proved above.

\noindent
(2) Let us now prove the induction step: prove~\eqref{eq:stand-three-k1} for $\SL(\delta+\alpha_{2\to j})$
utilizing the same formula for $\SL(\delta+\alpha_{2\to \jmath})$ with $2\leq \jmath<j$. The possible
decompositions of $\delta+\alpha_{2\to j}$ into the (unordered) sum of two positive roots are as follows:
\begin{equation}\label{eq:split-stand-three}
\begin{split}
  & \delta+\alpha_{2\to j}=(\delta) + (\alpha_{2\to j})\\
  & \delta+\alpha_{2\to j}=(\delta+\alpha_{2\to \jmath}) + (\alpha_{\overline{\jmath+1} \to j})
      \quad \mathrm{for}\ \jmath\in [2\to j) \\
  & \delta+\alpha_{2\to j}=(\delta+\alpha_{\overline{\jmath+1} \to j})+(\alpha_{2\to \jmath})
      \quad \mathrm{for}\ \jmath\in [2\to j) \\
  & \delta+\alpha_{2\to j}=(\alpha_{2 \to \jmath}) + (\alpha_{\overline{\jmath+1}\to j})
      \quad \mathrm{for}\ \jmath\in \big[\overline{j+1}\to 1\big)
\end{split}
\end{equation}
Thus, the concatenated words in the right-hand side of~\eqref{eq:generalized Leclerc} for
$\alpha=\delta+\alpha_{2\to j}$ are:
\begin{equation}\label{eq:list-stand-three}
\begin{split}
  & 10n \ldots \overline{r+2}\, 23 \ldots \overline{r+1}\,23\ldots j \,,\quad \mathrm{for}\ 1\leq r\leq n,\\
  & 10n \ldots 3234 \ldots \jmath\, 2\, \overline{\jmath+1} \ldots j \quad \mathrm{for}\ \jmath\in [2\to j) ,\\
  & 10n \ldots \overline{\jmath+1}\, 23 \ldots j\, 23 \ldots \jmath \quad \mathrm{for}\ \jmath\in [2\to j) ,\\
  & 10n \ldots \overline{\jmath+1}\, 23 \ldots j\, 23 \ldots \jmath \quad (j<\jmath\leq n) \,,
    \qquad  12 \ldots j\, 23 \ldots n0.
\end{split}
\end{equation}
The $n$ words in the first line correspond to the fact that $[\sb[\SL_r(\delta)],\sb[\SL(\alpha_{2\to j})]]\ne 0$
for all $1\leq r\leq n$, according to~\eqref{eq:delta-brack-standard}. Clearly, $10n \ldots 3234 \ldots j2$ is the
lexicographically largest word from the list~\eqref{eq:list-stand-three}. Therefore, $\SL(\delta+\alpha_{2\to j})$
is indeed given by~\eqref{eq:stand-three-k1} as the corresponding standard bracketing does not vanish:
\begin{equation*}
  \sb[\SL(\delta+\alpha_{2\to j})] = \sb[10n \ldots 3234 \ldots j 2] =
  \begin{cases}
    (-1)^{n}E_{2,j+1}t & \ \mathrm{if}\ 2<j\leq n \\
    (-1)^{n}E_{21}t^2 & \ \mathrm{if}\ j=0
  \end{cases}.
\end{equation*}

\noindent
$\bullet$ Proof of~\eqref{eq:stand-four} for $k=1$.

Let us prove by induction on $i$ that:
\begin{equation}\label{eq:stand-four-k1}
  \SL(\delta+\alpha_{1\to i})=123 \ldots n\, 1023\ldots i \quad \mathrm{for}\ 1\leq i\leq n.
\end{equation}
(1) The base of induction is $i=1$. The possible decompositions of $\delta+\alpha_1$ into the (unordered)
sum of two positive roots are as follows:
\begin{equation}\label{eq:split-stand-four-one}
  \delta+\alpha_{1}=(\delta) + (\alpha_1) \,, \qquad
  \delta+\alpha_{1}=(\alpha_{1 \to \jmath}) + (\alpha_{\overline{\jmath+1}\to 1}) \quad (\jmath\ne 0,1).
\end{equation}
Thus, the concatenated words in the right-hand side of~\eqref{eq:generalized Leclerc} for $\alpha=\delta+\alpha_1$ are:
\begin{equation}\label{eq:list-stand-four-one}
\begin{split}
  & 1\, 10n \ldots \overline{r+2}\, 23 \ldots \overline{r+1} \quad \mathrm{for}\ 1\leq r\leq n , \\
  & 123 \ldots \jmath\, 10n \ldots (\jmath+1) \quad (1<\jmath<n) \,,\qquad
    123 \ldots n\, 10.
\end{split}
\end{equation}
Here, the $n$ words in the first line correspond to the fact that $[\sb[\SL_r(\delta)],\sb[1]]\ne 0$ for
all $1\leq r\leq n$, according to~\eqref{eq:delta-brack-standard}. Clearly, $123 \ldots n\, 10$ is the
lexicographically largest word from the list~\eqref{eq:list-stand-four-one}. Therefore, $\SL(\delta+\alpha_{1})$
is indeed given by~\eqref{eq:stand-four-k1} as the corresponding standard bracketing does not vanish:
\begin{equation*}
  \sb[\SL(\delta+\alpha_1)]=\sb[123 \ldots n\, 10]=[\sb[123 \ldots n],\sb[10]]=-E_{12}t.
\end{equation*}
(2) Let us now prove the induction step: prove~\eqref{eq:stand-four-k1} for $\SL(\delta+\alpha_{1\to i})$
utilizing the same formula for $\SL(\delta+\alpha_{1\to \iota})$ with $1\leq \iota<i$. The possible decompositions
of $\delta+\alpha_{1\to i}$ into the (unordered) sum of two positive roots are as follows:
\begin{equation}\label{eq:split-stand-four-two}
\begin{split}
  & \delta+\alpha_{1\to i}=(\delta) + (\alpha_{1 \to i}) \\
  & \delta+\alpha_{1\to i}=(\delta+\alpha_{1 \to \iota}) + (\alpha_{(\iota+1) \to i})
    \quad \mathrm{for}\ \iota\in [1\to i) \\
  & \delta+\alpha_{1\to i}=(\delta+\alpha_{(\iota+1) \to i})+(\alpha_{1 \to \iota})
    \quad \mathrm{for}\ \iota\in [1\to i) \\
  & \delta+\alpha_{1\to i}=(\alpha_{1\to \iota}) + (\alpha_{\overline{\iota+1}\to i})
    \quad \mathrm{for}\ \iota\in \big[\overline{i+1}\to 0\big)
\end{split}
\end{equation}
Thus, the concatenated words in the right-hand side of~\eqref{eq:generalized Leclerc} for
$\alpha=\delta+\alpha_{1\to i}$ are:
\begin{equation}\label{eq:list-stand-four-two}
\begin{split}
  & 123 \ldots i\, 10n \ldots \overline{i+1}\, 23 \ldots i \,,\qquad
    123 \ldots i\, 123 \ldots n0,\\
  & 123 \ldots n\, 1023 \ldots \iota\, (\iota+1) \ldots i \quad \mathrm{for}\ 1\leq \iota<i ,\\
  & 123 \ldots \iota\, 10n \ldots (\iota+1)\, 23 \ldots i \quad \mathrm{for}\ 1<\iota<i \,,\qquad
    1\, 10n \ldots 3234 \ldots i2,\\
  & 123 \ldots \iota\, 10n \ldots \overline{\iota+1}\, 23 \ldots i \quad \mathrm{for}\ i<\iota\leq n.
\end{split}
\end{equation}
The two words in the first line correspond to the fact that $[\sb[\SL_r(\delta)],\sb[\SL(\alpha_{1\to i})]]\ne 0$
only when $r=i-1,n$ (for $1<i\leq n$), while the words in the third line correspond to the cases $1<\iota<i$ and
$\iota=1$ in the third line of~\eqref{eq:split-stand-four-two}. Clearly, $123 \ldots n\, 1023 \ldots i$ is
the lexicographically largest word from the list~\eqref{eq:list-stand-four-two}. Therefore,
$\SL(\delta+\alpha_{1\to i})$ is indeed given by~\eqref{eq:stand-four-k1} as the corresponding
standard bracketing does not vanish:
\begin{equation*}
  \sb[\SL(\delta+\alpha_{1\to i})]=\sb[123 \ldots n\, 1023 \ldots i]=
  [\sb[123 \ldots n],\sb[1023 \ldots i]]=-E_{1,i+1}t.
\end{equation*}

\noindent
$\bullet$ Proof of~\eqref{eq:stand-five} for $k=1$.

Let us prove by induction on $\hgt(\alpha_{j\to i})$ that
\begin{equation}\label{eq:stand-five-k1}
  \SL(\delta+\alpha_{j \to i})=10n \ldots j\, 23 \ldots \overline{j-2}\, 10n \ldots \overline{j-1}\, 23\ldots i
  \quad \mathrm{for}\ i<\overline{i+1}<j.
\end{equation}
(1) The base of induction is $(j,i)=(0,1)$. The possible decompositions of $\delta+\alpha_{0\to 1}$
into the (unordered) sum of two positive roots are as follows:
\begin{equation}\label{eq:split-stand-five-one}
\begin{split}
  & \delta+\alpha_{0\to 1}=(\delta) + (\alpha_{0\to 1}) ,\\
  & \delta+\alpha_{0\to 1}=(\delta+\alpha_0) + (\alpha_{1}) \,,\qquad
    \delta+\alpha_{0\to 1}=(\delta+\alpha_1) + (\alpha_0) ,\\
  & \delta+\alpha_{0\to 1}=(\alpha_{0\to \iota})+(\alpha_{(\iota+1)\to 1})
    \quad \mathrm{for}\ 1< \iota< n.
\end{split}
\end{equation}
Thus, the concatenated words in the right-hand side of~\eqref{eq:generalized Leclerc} for
$\alpha=\delta+\alpha_{0\to 1}$ are:
\begin{equation}\label{eq:list-stand-five-one}
\begin{split}
   & 1010 \ldots \overline{r+2}\, 23 \ldots (r+1) \quad \mathrm{for}\ 1<r\leq n-1 \,,\qquad
     123 \ldots n010,\\
   & 11023 \ldots n0 \,,\qquad 123 \ldots n100 ,\\
   & 1023 \ldots \iota\,10n \ldots (\iota+1) \quad \mathrm{for}\ 1<\iota<n.
\end{split}
\end{equation}
Here, the $n$ words in the first line correspond to the fact that $[\sb[\SL_r(\delta)],\sb[10]]\ne 0$
for all $1\leq r\leq n$, according to~\eqref{eq:delta-brack-standard}. Clearly, $1023 \ldots (n-1)10n$ is the
lexicographically largest word from the list~\eqref{eq:list-stand-five-one}. Therefore, $\SL(\delta+\alpha_{0\to 1})$
is indeed given by~\eqref{eq:stand-five-k1} as the corresponding standard bracketing does not vanish:
\begin{equation*}
  \sb[\SL(\delta+\alpha_{0\to 1})]=
  \sb[1023 \ldots (n-1)10n]=[\sb[1023 \ldots (n-1)],\sb[10n]] = -E_{n+1,2}t^2.
\end{equation*}
(2) Let us now prove the induction step: prove~\eqref{eq:stand-five-k1} for $\SL(\delta+\alpha_{j\to i})$ utilizing
the same formula for $\SL(\delta+\alpha_{\jmath\to \iota})$ with $[\jmath \to \iota)\subsetneq [j \to i)$. The
possible decompositions of $\delta+\alpha_{j\to i}$ into the (unordered) sum of two positive roots are as follows:
\begin{equation}\label{eq:split-stand-five-two1}
\begin{split}
  & \delta+\alpha_{j\to i}=(\delta+\alpha_{j\to \jmath}) + (\alpha_{\overline{\jmath+1}\to i})
     \quad \mathrm{for}\ \jmath\in [j\to i)\\
  & \delta+\alpha_{j\to i}=(\alpha_{j\to \jmath}) + (\delta+\alpha_{\overline{\jmath+1}\to i})
     \quad \mathrm{for}\ \jmath\in [j\to i)\\
  & \delta+\alpha_{j\to i}=(\alpha_{j\to \jmath}) + (\alpha_{\overline{\jmath+1}\to i})
     \quad \mathrm{for}\ \jmath\in [(i+1)\to \overline{j-1})
\end{split}
\end{equation}
as well as
\begin{equation}\label{eq:split-stand-five-two2}
  \delta+\alpha_{j\to i}=(\delta) + (\alpha_{j\to i}).
\end{equation}
The concatenated words in the right-hand side of~\eqref{eq:generalized Leclerc} for
$\alpha=\delta+\alpha_{j\to i}$ arising through~\eqref{eq:split-stand-five-two1} are:
\begin{equation}\label{eq:list-stand-five-two1}
\begin{split}
  & 10n \ldots \overline{\jmath+1}\, 23 \ldots i\, 10n \ldots j\, 23 \ldots \jmath
     \quad \mathrm{for}\ j\leq \jmath\leq 0 ,\\
  & 10n \ldots j\, 23 \ldots \overline{j-2}\, 10n \ldots \overline{j-1}\, 23 \ldots \jmath\, (\jmath+1) \ldots i
     \quad \mathrm{for}\ 1\leq \jmath<i ,\\
  & 10n \ldots \overline{\jmath+1}\, 23 \ldots (\jmath-1)10n \ldots \jmath\, 23 \ldots i\, j(j+1)\ldots \jmath
     \quad \mathrm{for}\ j\leq \jmath\leq n ,\\
  & 123 \ldots n\, 1023 \ldots i\, j(j+1) \ldots n\, 0,\\
  & 10n \ldots j\, 10n \ldots 32\, 34 \ldots i\, 2, \\
  & 10n \ldots j\, 23 \ldots \jmath\, 10n \ldots (\jmath+1)\, 23 \ldots i
    \quad \mathrm{for}\ 2\leq \jmath<i,\\
  & 10n \ldots j\, 23 \ldots \jmath\, 10n \ldots (\jmath+1)\, 23 \ldots i
    \quad \mathrm{for}\ \jmath\in [(i+1) \to \overline{j-1}),
\end{split}
\end{equation}
where the words in the first two lines of~\eqref{eq:list-stand-five-two1} correspond to the first line
of~\eqref{eq:split-stand-five-two1}, depending on whether $\jmath\geq j$ or $\jmath<i$, while the words
in the third--sixth lines of~\eqref{eq:list-stand-five-two1} correspond to the second line
of~\eqref{eq:split-stand-five-two1}, depending on whether $j\leq \jmath<0$, $\jmath=0$, $\jmath=1$, or
$1<\jmath<i$. Meanwhile, the concatenated words in the right-hand side of~\eqref{eq:generalized Leclerc}
for $\alpha=\delta+\alpha_{j\to i}$ arising through the decomposition~\eqref{eq:split-stand-five-two2}
depend on whether $i=1$ or $i\ne 1$:
\begin{equation}\label{eq:list-stand-five-two2}
  10n \ldots j\, 23 \ldots i\, 10n \ldots j\, 23 \ldots \overline{j-1} \,,\
  10n \ldots j\, 23 \ldots i\, 10n \ldots (i+1) 23 \ldots i
\end{equation}
if $i\ne 1$, and
\begin{equation}\label{eq:list-stand-five-two3}
\begin{split}
  & 10n \ldots \overline{r+2}\, 23 \ldots \overline{r+1}\, 10n \ldots j \quad \mathrm{for}\ j-2<r\leq n,\\
  & 10n \ldots j\, 10n \ldots (r+2) 23 \ldots (r+1) \,  \quad \mathrm{for}\ 1\leq r\leq j-2
\end{split}
\end{equation}
if $i=1$. It is easy to see that
  $10n \ldots j\, 23 \ldots \overline{j-2}\, 10n \ldots \overline{j-1}\, 23\ldots i$
is the lexicographically largest word from the above
lists~\eqref{eq:list-stand-five-two1}--\eqref{eq:list-stand-five-two3}. Thus, $\SL(\delta+\alpha_{j\to i})$
is indeed given by~\eqref{eq:stand-five-k1} as the corresponding standard bracketing does not vanish:
\begin{equation*}
  \sb[\SL(\delta+\alpha_{j\to i})]=
  [\sb[10n \ldots j\, 23 \ldots \overline{j-2}],\sb[10n \ldots \overline{j-1}\, 23\ldots i]]=-E_{j,i+1}t^2.
\end{equation*}

\noindent
\underline{Step of Induction}

Let us now prove the step of induction, proceeding by the height of a root.
We shall thus verify the stated formulas for $\aslaws$ $\SL_*(\alpha)$ with
\begin{equation}\label{eq:induction-step-block}
  (d+1)h\leq \hgt(\alpha)<(d+2)h \,,\qquad \mathrm{where}\ h=n+1=\hgt(\delta),
\end{equation}
assuming the validity of the stated formulas for all $\SL_*(\beta)$ with $\hgt(\beta)<\hgt(\alpha)$. In other words,
we verify~\eqref{eq:stand-six} for $k=d$ and formulas~\eqref{eq:stand-one}--\eqref{eq:stand-five} for $k=d+1$.

When evaluating the standard bracketings $\sb[\cdots]$ below, we will only need their values up to
nonzero scalar factors. To this end, we shall use the following notation:
\begin{equation}\label{eq:doteq}
  A\doteq B \quad \text{if} \quad  A=c\cdot B \quad \text{for some} \ c\in \BC\backslash\{0\}.
\end{equation}

\noindent
$\bullet$ Proof of~\eqref{eq:stand-six} for $k=d$.

The possible decompositions of $(d+1)\delta$ into the (unordered) sum of two positive real roots are as follows:
\begin{align}
  & (d+1)\delta=(a\delta+\alpha_{1}) + ((d-a)\delta+\alpha_{2\to 0} ),
  \label{eq:split-imaginary-step-zero}  \\
  & (d+1)\delta=(a\delta+\alpha_{1\to j}) + ((d-a)\delta+\alpha_{\overline{j+1} \to 0} )
    \quad \mathrm{for}\ 2\leq j\leq n,
  \label{eq:split-imaginary-step-one}  \\
  & (d+1)\delta=(a\delta+\alpha_{2\to j}) + ((d-a)\delta+\alpha_{\overline{j+1}\to 1} )
    \quad \mathrm{for}\ 2\leq j\leq n,
  \label{eq:split-imaginary-step-two}  \\
  & (d+1)\delta=(a\delta+\alpha_{i\to j}) + ((d-a)\delta+\alpha_{\overline{j+1}\to (i-1)})
    \quad \mathrm{for}\ 2<i\leq j\leq n
  \label{eq:split-imaginary-step-three},
\end{align}
with $0\leq a\leq d$. By the induction hypothesis, we get the following concatenations:
\begin{equation}\label{eq:nasty-0}
  \footnotesize{
  \ell_{0}^{(a)} =
    \begin{cases}
      1 \underbrace{10n \ldots 32}_{\frac{d}{2} \, \rtim} 2 \underbrace{10n \ldots 32}_{\frac{d}{2} \, \rtim} 3 4 \ldots n 0
        & \mathrm{if}\ a=0 , 2\mid d\\
      1 \underbrace{10n \ldots 32}_{\frac{d+1}{2} \, \rtim} 3 4 \ldots n 0 \underbrace{10n \ldots 32}_{\frac{d-1}{2} \, \rtim} 2
        & \mathrm{if}\ a=0 , 2\nmid d\\
      12 \ldots n \underbrace{1023 \ldots n}_{(a-1) \, \rtim} 10
      \underbrace{10n \ldots 32}_{\frac{d-a}{2} \, \rtim} 2 \underbrace{10n \ldots 32}_{\frac{d-a}{2} \, \rtim} 3 4 \ldots n 0
        & \mathrm{if}\ 0<a<d , 2\mid (d-a)\\
      12 \ldots n \underbrace{1023 \ldots n}_{(a-1) \, \rtim} 10
      \underbrace{10n \ldots 32}_{\frac{d-a+1}{2} \, \rtim} 3 4 \ldots n 0 \underbrace{10n \ldots 32}_{\frac{d-a-1}{2} \, \rtim} 2
        & \mathrm{if}\ 0<a<d , 2\nmid (d-a)\\
      12 \ldots n \underbrace{1023 \ldots n}_{d \, \rtim} 0
        & \mathrm{if}\ a=d
    \end{cases}}
\end{equation}
for the decompositions~\eqref{eq:split-imaginary-step-zero},
\begin{equation}\label{eq:nasty-1}
  \footnotesize{
  \ell_{1j}^{(a)} =
    \begin{cases}
      123 \ldots n \underbrace{1023 \ldots n}_{(a-1) \, \rtim}
      1023 \ldots j \underbrace{10n \ldots \overline{j+1}\, 23 \ldots j}_{(d-a) \, \rtim} \overline{j+1} \ldots 0
        & \mathrm{if}\ 1\leq a\leq d\\
      123 \ldots j\, \underbrace{10n \ldots \overline{j+1}\, 23 \ldots j}_{d \, \rtim} \overline{j+1} \ldots 0
        & \mathrm{if}\ a=0
    \end{cases}}
\end{equation}
for the decompositions~\eqref{eq:split-imaginary-step-one} with $2\leq j\leq n$,
\begin{equation}\label{eq:nasty-2}
    \footnotesize{
    \ell_{2j}^{(a)} =
    \begin{cases}
      10n \ldots \overline{j+1}\, 23 \ldots (j-1) \underbrace{10n \ldots j\, 23 \ldots (j-1)}_{d \, \rtim} j
        & \mathrm{if}\ a=0\\
      10n \ldots \overline{j+1}\, 23 \ldots (j-1) \underbrace{10n \ldots j\, 23 \ldots (j-1)}_{(d-a-1) \, \rtim}
      10n \ldots j \\  \qquad \qquad \qquad
      \underbrace{10n \ldots 32}_{\frac{a}{2} \, \rtim} 2\underbrace{10n \ldots 32}_{\frac{a}{2} \, \rtim} 34 \ldots j
        & \mathrm{if}\ 0<a<d, 2\mid a\\
      10n \ldots \overline{j+1}\, 23 \ldots (j-1) \underbrace{10n \ldots j\, 23 \ldots (j-1)}_{(d-a-1) \, \rtim}
      10n \ldots j \\  \qquad \qquad \qquad
      \underbrace{10n \ldots 32}_{\frac{a+1}{2} \, \rtim} 34 \ldots j\underbrace{10n \ldots 32}_{\frac{a-1}{2} \, \rtim}2
        & \mathrm{if}\ 0<a<d, 2\nmid a\\
      10n \ldots \overline{j+1}
      \underbrace{10n \ldots 32}_{\frac{d}{2} \, \rtim} 2\underbrace{10n \ldots 32}_{\frac{d}{2} \, \rtim} 34 \ldots j
        & \mathrm{if}\ a=d-\mathrm{even}\\
      10n \ldots \overline{j+1}
      \underbrace{10n \ldots 32}_{\frac{d+1}{2} \, \rtim} 34 \ldots j\underbrace{10n \ldots 32}_{\frac{d-1}{2} \, \rtim}2
        & \mathrm{if}\ a=d-\mathrm{odd}\\
    \end{cases}}
\end{equation}
for the decompositions~\eqref{eq:split-imaginary-step-two} with $2\leq j\leq n$, and
\begin{equation}\label{eq:nasty-3}
    \footnotesize{
    \ell_{3ji}^{(a)}=
    \begin{cases}
      10n \ldots \overline{j+1}\, 23 \ldots (j-1) \underbrace{10n \ldots j\, 23 \ldots (j-1)}_{d \, \rtim} j
         & \mathrm{if}\ a=0\\
      10n \ldots \overline{j+1}\, 23 \ldots (j-1) \underbrace{10n \ldots j\, 23 \ldots (j-1)}_{(d-a-1) \, \rtim}
      10n \ldots j\, \\  \qquad \qquad \qquad
      23 \ldots (i-1) \underbrace{10n \ldots i\, 23 \ldots (i-1)}_{a \, \rtim} i(i+1) \ldots j
         & \mathrm{if}\ 0<a<d\\
      10n \ldots \overline{j+1}\, 23 \ldots (i-1)
      \underbrace{10n \ldots i\, 23 \ldots (i-1)}_{d \, \rtim} i(i+1) \ldots j
         & \mathrm{if}\ a=d
    \end{cases}}
\end{equation}
for the decompositions~\eqref{eq:split-imaginary-step-three} with $2<i\leq j\leq n$.

Clearly, the lexicographically largest word from the lists~\eqref{eq:nasty-0}--\eqref{eq:nasty-3} is
  \[ \ell_{22}^{(0)}=10n \ldots 3\underbrace{10n \ldots 2}_{d \, \rtim} 2, \]
which coincides with the word in the right-hand side of~\eqref{eq:stand-six} for $k=d$ and $r=1$.
Let us compute its standard bracketing:
\begin{equation*}
  \sb[\ell_{22}^{(0)}]=
  [\sb[10n \ldots 3],\sb[\underbrace{10n \ldots 2}_{d \, \rtim}2]]\doteq
  [E_{32}t, E_{23}t^d]\doteq (E_{22}-E_{33})t^{d+1},
\end{equation*}
where we use the induction hypothesis in the second equality.
Moreover, a similar argument also implies that
\begin{equation}\label{eq:lin.dep.brack}
  \sb[\ell_{22}^{(a)}]\doteq (E_{22}-E_{33})t^{d+1}\doteq \sb[\ell_{22}^{(0)}] \qquad \forall\, 0<a\leq d.
\end{equation}

The next lexicographically largest word from the lists~\eqref{eq:nasty-0}--\eqref{eq:nasty-3},
with the words $\{\ell_{22}^{(a)}\}_{a=0}^d$ excluded due to~\eqref{eq:lin.dep.brack}, is
  \[ \ell_{23}^{(0)}=\ell^{(0)}_{333}=10n \ldots 42\underbrace{10n \ldots 32}_{d \, \rtim} 3, \]
which coincides with the word in the right-hand side of~\eqref{eq:stand-six} for $k=d$ and $r=2$.
Let us compute its standard bracketing:
\begin{equation*}
  \sb[\ell_{23}^{(0)}]=
  [\sb[10n \ldots 42],\sb[\underbrace{10n \ldots 32}_{d \, \rtim}3]]\doteq
  [E_{43}t, E_{34}t^d]\doteq (E_{33}-E_{44})t^{d+1},
\end{equation*}
where we use the induction hypothesis in the second equality. Moreover, a similar argument also applies
to the remaining words $\ell_{23}^{(a)}$ and $\ell_{333}^{(a)}$ with $0<a\leq d$:
\begin{equation*}
  \sb[\ell_{23}^{(a)}], \sb[\ell_{333}^{(a)}] \in
  \mathrm{span} \big\{ (E_{22}-E_{33})t^{d+1},(E_{33}-E_{44})t^{d+1} \big\} =
  \mathrm{span} \big\{ \sb[\ell_{22}^{(0)}],\sb[\ell_{23}^{(0)}] \big\} .
\end{equation*}
Proceeding further with the same line of reasoning we find that the $(n-1)$ lexicographically largest
words from the above lists with linearly independent standard bracketings are:
$\ell_{22}^{(0)}, \ell_{23}^{(0)},\ldots, \ell_{2n}^{(0)}$.
This proves~\eqref{eq:stand-six} for $k=d$ and $1\leq r\leq n-1$.

The lexicographically largest word among the remaining lists~\eqref{eq:nasty-0}--\eqref{eq:nasty-1} is
  \[ \ell_{1n}^{(0)}=\ell_0^{(d)}=123 \ldots n\underbrace{1023 \ldots n}_{d \, \rtim}0. \]
Let us evaluate its standard bracketing:
\begin{equation*}
  \sb[\ell_{1n}^{(0)}]=
  [\sb[123 \ldots n],\sb[\underbrace{1023 \ldots n}_{d \, \rtim}0]]\doteq
  [E_{1,n+1},E_{n+1,1}t^{d+1}]=(E_{11}-E_{n+1,n+1})t^{d+1}.
\end{equation*}
As this expression is linear independent with $\{\sb[\ell^{(0)}_{2j}]\}_{j=2}^n$ computed above, we get
$\SL_n((d+1)\delta)=\ell_{1n}^{(0)}$. This completes our proof of~\eqref{eq:stand-six} for $k=d$,
and proves:
\begin{equation*}
  \sb[\SL_r((d+1)\delta)]\doteq
  \begin{cases}
    (E_{r+1,r+1}-E_{r+2,r+2})t^{d+1} & \mathrm{if}\ 1\leq r\leq n-1\\
    (E_{11}-E_{n+1,n+1})t^{d+1} & \mathrm{if}\ r=n\\
  \end{cases} \ .
\end{equation*}

\noindent
$\bullet$ Proof of~\eqref{eq:stand-one}--\eqref{eq:stand-five} for $k=d+1$.

The case of real roots is treated precisely as in our part II of the induction base. Let us
present the proof of~\eqref{eq:stand-three}, leaving the other ones to the interested reader.

Instead of listing all possible decompositions of $(d+1)\delta+\alpha_{2 \to j}$, we start by noting
that the word $\ell(d+1,j)$ from the right-hand side of~\eqref{eq:stand-three} for $k=d+1$ corresponds
to the decomposition
  $(d+1)\delta+\alpha_{2 \to j} =
   (\lfloor \frac{d+1}{2}\rfloor \delta + \alpha_2) + (\lceil \frac{d+1}{2}\rceil \delta + \alpha_{3 \to j})$.
Since $\ell(d+1,j)>10n\ldots 32=\SL_{1}(\delta)$, it suffices to consider in~\eqref{eq:generalized Leclerc}
only those decompositions $(d+1)\delta+\alpha_{2 \to j}=\gamma_1+\gamma_2$ such that each word
$\SL_*(\gamma_1)$, $\SL_*(\gamma_2)$ is either $> 10n \ldots 32$ or is a prefix of $10n \ldots 32$.
By the induction hypothesis, this restricts us to the following list:
\begin{equation}\label{eq:decomp-stand-two-step}
\begin{split}
  & (d+1)\delta+\alpha_{2 \to j}=(\delta)+(d\delta+\alpha_{2 \to j}) ,\\
  & (d+1)\delta+\alpha_{2 \to j}=(a\delta+\alpha_{2})+((d+1-a)\delta+\alpha_{3 \to j}) \,,
    \quad 0\leq a\leq d+1 ,\\
  & (d+1)\delta+\alpha_{2 \to j}=((d+1)\delta+\alpha_{2 \to \jmath})+(\alpha_{(\jmath+1) \to j}) \,,
    \quad 2<\jmath<j  .
\end{split}
\end{equation}
We therefore get the following list of concatenated words:
\begin{equation}\label{eq:list-stand-two-step}
\begin{split}
 & \begin{cases}
     10n\ldots32 \underbrace{10n \ldots 32}_{\frac{d}{2} \, \rtim} 2\underbrace{10n \ldots 32}_{\frac{d}{2} \, \rtim} 34 \ldots j
       & \mathrm{if}\ 2\mid\, d\\
     10n\ldots32\underbrace{10n \ldots 32}_{\frac{d+1}{2} \, \rtim} 34 \ldots j\underbrace{10n \ldots 32}_{\frac{d-1}{2} \, \rtim}2
       & \mathrm{if}\ 2\nmid\, d
  \end{cases} \ , \\
 & \begin{cases}
     \underbrace{10n \ldots 32}_{a \, \rtim}2\underbrace{10n \ldots 32}_{(d+1-a) \, \rtim}34\ldots j
       & \mathrm{if} \ \frac{d+1}{2}\leq a\leq d+1 \\
     \underbrace{10n \ldots 32}_{(d+1-a) \, \rtim}34\ldots j\underbrace{10n \ldots 32}_{a \, \rtim}2
       & \mathrm{if} \ 0\leq a< \frac{d+1}{2}
   \end{cases} \ ,\\
  & \begin{cases}
     \underbrace{10n \ldots 32}_{\frac{d+2}{2} \, \rtim} 34 \ldots \jmath
     \underbrace{10n \ldots 32}_{\frac{d}{2} \, \rtim} 2\, (\jmath+1) \ldots j
       & \mathrm{if}\ 2\mid\,d \\
     \underbrace{10n \ldots 32}_{\frac{d+1}{2} \, \rtim} 2
     \underbrace{10n \ldots 32}_{\frac{d+1}{2} \, \rtim} 34 \ldots \jmath\, (\jmath+1) \ldots j
       & \mathrm{if}\ 2\nmid\,d
   \end{cases} \ .
\end{split}
\end{equation}
It is easy to see that the word $\ell(d+1,j)$ is the lexicographically largest word from
the list~\eqref{eq:list-stand-two-step}. Let us evaluate its standard bracketing:
\begin{equation*}
\begin{split}
  & \sb[\ell(d+1,j)] \doteq
    \big[\sb[\underbrace{10n \ldots 32}_{\lfloor \frac{d+1}{2}\rfloor \, \rtim}2],
         \sb[\underbrace{10n \ldots 32}_{\lceil \frac{d+1}{2}\rceil \, \rtim}34\ldots j]\big]\doteq\\
  & \begin{cases}
      [E_{23}t^{\lfloor \frac{d+1}{2}\rfloor}, E_{3,j+1}t^{\lceil \frac{d+1}{2}\rceil}]
         & \mathrm{if}\ 2<j\leq n\\
      [E_{23}t^{\lfloor \frac{d+1}{2}\rfloor}, E_{31}t^{\lceil \frac{d+3}{2}\rceil}]
         & \mathrm{if}\ j=0
    \end{cases}
    \ \doteq \
    \begin{cases}
      E_{2,j+1}t^{d+1} & \mathrm{if}\ 2<j\leq n\\
      E_{21}t^{d+2}   & \mathrm{if}\ j=0
    \end{cases},
\end{split}
\end{equation*}
where we use the induction hypothesis for
  $\sb[\SL(\lfloor \frac{d+1}{2}\rfloor \delta + \alpha_2)],
   \sb[\SL(\lceil \frac{d+1}{2}\rceil \delta + \alpha_{3 \to j})]$.

This completes our proof of~\eqref{eq:stand-three} for $k=d+1$.
\end{proof}


\subsection{General order}\label{ssec:general}
\

We now compute $\aslaws$ for $\widehat{\fsl}_{n+1}$ with an arbitrary order $<$ on $\wI=\{0,1,\ldots, n\}$.
The key feature is that all $\aslaws$ are determined by those of length $\leq n$. Furthermore, the explicit
description of the degree $\delta$ $\aslaws$ is instrumental for the general pattern.

\medskip
\noindent
\textbf{Notation:} To distinguish from $<$, we shall now use $\prec$ for the standard order on $\wI$:
\begin{equation*}
  1\prec 2\prec 3\prec \dots \prec n\prec 0 \,.
\end{equation*}

We start with the following simple result:

\begin{lemma}\label{Sasha's proposition}
Consider two arches $[a \to \overline{b+1})\subsetneq [a'\to \overline{b'+1})$ such that $b'\ne a'-1$ and
$\min[a'\to \overline{b'+1})\in [a \to \overline{b+1})$. Then: $\SL(\alpha_{a\to b})<\SL(\alpha_{a'\to b'})$.
\end{lemma}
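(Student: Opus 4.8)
The plan is to read both words off the explicit formulas~\eqref{eq:stand-seven}--\eqref{eq:stand-eight} for the degree-$0$ (that is, $\delta$-free) real roots, and then to compare them lexicographically. Throughout, I identify a real root $\alpha_{i\to j}$ with its arc of letters $S=[i\to\overline{j+1})$; the assumption $b'\ne a'-1$ guarantees that $S'=[a'\to\overline{b'+1})\ne\wI$, so that $\alpha_{a'\to b'}$ is a genuine real root and one of~\eqref{eq:stand-seven}--\eqref{eq:stand-eight} applies to it. Since any Lyndon word begins with its least letter (Definition~\ref{def:lyndon-2}, in the order $1<2<\dots<n<0$), the word $\SL(\alpha_{a\to b})$ begins with $\min S$ and $\SL(\alpha_{a'\to b'})$ with $\min S'$. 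Writing $m=\min S'$, the hypotheses $m\in S$ and $S\subsetneq S'$ force $\min S=m$ as well, so both words begin with $m$ and I must compare them beyond their first letter. As $0$ is the largest letter, $m=0$ would force $S'=\{0\}$, incompatible with $S\subsetneq S'$; hence $m\in\{1,\dots,n\}$.

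If $m\ge 2$, then $1\notin S'\supseteq S$, so by~\eqref{eq:stand-seven} both words are the increasing runs $m(m+1)\cdots b$ and $m(m+1)\cdots b'$ starting at their common least letter $m$; since $S\subsetneq S'$, the former is a proper prefix of the latter, whence $\SL(\alpha_{a\to b})<\SL(\alpha_{a'\to b'})$.

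If $m=1$, both words begin with $1$, and by~\eqref{eq:stand-seven}--\eqref{eq:stand-eight} the second letter of the word of an arc containing $1$ equals $0$ exactly when the arc also contains $0$, and equals $2$ otherwise (the word being the single letter $1$ when the arc is $\{1\}$). Hence if $S$ omits $0$ while $S'$ contains it, the second letters are $2$ and $0$ with $2<0$, and we conclude at once; while if $S'$ omits $0$, then $S'=\{1,\dots,b'\}\supsetneq S=\{1,\dots,b\}$ and the shorter word is a prefix, again giving the inequality.

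The one delicate case is when both $S$ and $S'$ contain $\{0,1\}$, so both words take the wrapping shape $1\,0\,n(n-1)\cdots a\,23\cdots c$ of~\eqref{eq:stand-eight} for the arc $\{a,\dots,n,0,1,\dots,c\}$ (the descending block $n\cdots a$ being replaced by $c(c-1)\cdots 2$ when the arc is $\{0,1,\dots,c\}$). This letter-by-letter comparison is the main obstacle, and I organize it through the nesting $S\subseteq S'$, which forces the descending parameter $a'$ of $S'$ to satisfy $a'\le a$ and its ascending parameter $c'$ to satisfy $c'\ge c$. The crucial arithmetic point is that a wrapping arc containing $1$ can never have descending parameter equal to $2$ (that would force the arc to be $\wI$ or to repeat the letter $2$), so the parameter is always $\ge 3$. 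Consequently, when $a'<a$ the word of $S'$ is still descending, with letter $a-1\ge 3>2$, precisely where the word of $S$ switches to its ascending block beginning with $2$, giving $\SL(\alpha_{a\to b})<\SL(\alpha_{a'\to b'})$; when $a'=a$, the nesting makes the ascending block of $S$ a proper prefix of that of $S'$, with the same conclusion; and the degenerate arc $S=\{0,1,\dots,c\}$ is handled by the same kind of comparison, its third letter $c$ being strictly smaller than the third letter of the larger word. Assembling these cases proves the lemma.
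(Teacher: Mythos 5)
There is a genuine gap, and it is one of generality rather than of detail: your argument proves the lemma only for the standard order $1<2<\dots<n<0$, whereas the lemma is stated and used for an \emph{arbitrary} order on $\wI$. It sits in Subsection~\ref{ssec:general} and feeds into Lemma~\ref{deltaorder} and the proof of Theorem~\ref{thm:sln-general}, where the order is only normalized by~\eqref{eq:minimal letters}: the smallest letter is $1$ and the second-smallest letter $i$ is arbitrary (not necessarily $2$), with no constraint at all on the relative order of the remaining letters. The explicit formulas~\eqref{eq:stand-seven}--\eqref{eq:stand-eight} from which you read off both words are proved in Theorem~\ref{thm:sln-standard} \emph{only} for the standard order; for a general order the finite-type words $\SL(\alpha_{a\to b})$ admit no such closed form (they come from Leclerc's algorithm and depend on the order in an essentially unstructured way), so your letter-by-letter comparison cannot even be set up in the situation where the lemma is actually invoked. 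The paper's proof is order-free: it observes that the statement is a property of the Lalonde--Ram bijection~\eqref{eqn:associated word} for the finite Lie algebra $\fsl_{\hgt(\alpha_{a'\to b'})+1}$ with simple roots labelled by $[a'\to \overline{b'+1})$, and then applies Rosso's convexity (Proposition~\ref{prop:fin.convex}) twice: writing $\alpha_{a\to b'}=\alpha_{a\to b}+\alpha_{\overline{b+1}\to b'}$, convexity sandwiches $\SL(\alpha_{a\to b'})$ between the words of the two summands, and since the minimal letter $\min[a'\to \overline{b'+1})$ lies in the small arc, comparing first letters yields $\SL(\alpha_{a\to b})<\SL(\alpha_{a\to b'})$; the symmetric step moves $a$ to $a'$. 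That two-step convexity argument is the missing idea, and it is exactly what makes the lemma robust under any ordering of $\wI$.

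Within the standard-order special case your analysis is essentially sound, with one factual slip: for the arc $\{0,1,\dots,c\}$ the word is the $j=0$ degeneration of~\eqref{eq:stand-eight}, namely $\SL(\alpha_{0\to c})=10\,23\cdots c$ (cf.\ the words $1023\ldots \iota$ appearing in~\eqref{eq:list-stand-five-one}), not $1\,0\,c(c-1)\cdots 2$ as your parenthetical asserts; your conclusion in that subcase survives because the third letter is then $2<n$. Also note that your key comparison ``$2$ versus $a-1$'' implicitly uses that a proper wrapping arc with nonempty ascending block satisfies $c\leq a-2$, hence $a\geq 4$ and $a-1\geq 3>2$; when $a=3$ the ascending block of the smaller word is forced to be empty and one falls back on the prefix case, which you should state explicitly. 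But these are repairable local issues; the substantive defect is that the proof does not cover general orders, which is the entire point of the lemma in this paper.
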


\begin{proof}
We note that this result is a property of the Lalonde-Ram's bijection $\ell$~\eqref{eqn:associated word} for the
simple Lie algebra $\fsl_{\hgt(\alpha_{a'\to b'})+1}$ with simple roots labelled by $[a'\to \overline{b'+1})$.

If $b\ne b'$, consider roots $\gamma_1=\alpha_{a\to b}$ and $\gamma_2=\alpha_{\overline{b+1}\to b'}$ whose sum is
$\alpha=\gamma_1+\gamma_2=\alpha_{a\to b'}$. In view of the remark made above (reduction to a finite case), the convexity
of Proposition~\ref{prop:fin.convex} implies that $\SL(\alpha)$ is ``sandwiched'' between $\SL(\gamma_1)$ and $\SL(\gamma_2)$.
But by our assumption the minimal letter of $\SL(\gamma_1)$ is $\min[a'\to \overline{b'+1})$ which is smaller than
the minimal letter of $\SL(\gamma_2)$. Thus, we get: $\SL(\gamma_1)<\SL(\alpha)<\SL(\gamma_2)$.

By a similar argument, we also conclude that $\SL(\alpha_{a\to b'})<\SL(\alpha_{a'\to b'})$ if $a\ne a'$.
This completes our proof of the desired inequality $\SL(\alpha_{a\to b})<\SL(\alpha_{a'\to b'})$.
\end{proof}

Due to the $D_{n+1}$-symmetry of $\wI$ and $\wDelta^+$, where $D_{n+1}$ denotes a dihedral group,
we can assume, without loss of generality, that
\begin{equation}\label{eq:minimal letters}
  1=\min\big\{a \,\big|\, a\in\wI \,\big\} \quad \mathrm{and} \quad
  i:=\min\big\{a \,\big|\, a\in\wI\setminus \{1\}\big\}\ne 0,
\end{equation}
where $\min$ is taken with respect to our order $<$ on $\wI$.

\begin{lemma}\label{deltaorder}
For $c\in \wI\setminus \{1\}=\{2,\ldots,n,0\}$, define the degree $\delta$ word $\ell_c(\delta)\in \wI^*$~via:
\begin{equation}\label{eq:el-words}
  \ell_c(\delta):=\SL(\alpha_{\overline{c+1} \to \overline{c-1}}) c.
\end{equation}
Then, we have:
\begin{itemize}[leftmargin=0.7cm]

\item[1)]
$\ell_a(\delta)>\ell_b(\delta)$ whenever $i\preceq a \prec b \leq 0$,

\item[2)]
$\ell_a(\delta)<\ell_b(\delta)$ whenever $1 \prec a \prec b \preceq i$,

\end{itemize}
so that $\ell_2(\delta)<\ell_3(\delta)<\dots<\ell_i(\delta)>\ell_{i+1}(\delta)>\dots>\ell_0(\delta)$.
\end{lemma}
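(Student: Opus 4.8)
The plan is to first reformulate the words $\ell_c(\delta)$ and reduce the comparison to finite type $A_n$. Since $\alpha_{\overline{c+1}\to\overline{c-1}}=\delta-\alpha_c$ and $\SL(\delta-\alpha_c)$ is a Lyndon word whose support $\wI\setminus\{c\}$ contains the global minimal letter $1$ (recall the normalization~\eqref{eq:minimal letters}), it must begin with $1$; as $\SL(\alpha_c)=[c]$ with $c>1$ we get $\SL(\delta-\alpha_c)<[c]$, so $\ell_c(\delta)=\SL(\delta-\alpha_c)\,c$ is precisely the concatenation that the generalized Leclerc algorithm (Proposition~\ref{prop:generalized Leclerc}) attaches to the decomposition $\delta=(\delta-\alpha_c)+\alpha_c$. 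Thus every $\ell_c(\delta)$ is a permutation of $\wI$ starting with $1$ and ending with $c$. The decisive structural point is that $\SL(\delta-\alpha_c)$ is the Lalonde--Ram word~\eqref{eqn:associated word} attached to the highest root $\sum_{j\ne c}\alpha_j$ of the \emph{linear} $A_n$ Dynkin subdiagram $\wI\setminus\{c\}$ obtained by deleting the node $c$ from the affine cycle. Hence the entire comparison lives inside finite type $A_n$, where all the tools of Section~\ref{sec:SL-words basics} apply: Leclerc's algorithm, Rosso's convexity (Proposition~\ref{prop:fin.convex}), and the sub-arch monotonicity of Lemma~\ref{Sasha's proposition}.

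A useful first reduction is that the appended final letters never matter. Writing $w_c:=\SL(\delta-\alpha_c)$, the words $w_a$ and $w_b$ have the same length $n$ but different letter content (one omits $a$ and contains $b$, the other omits $b$ and contains $a$), so $w_a\ne w_b$ and they already differ at some position $\le n$; therefore the lexicographic comparison of $\ell_a(\delta)=w_a\,a$ and $\ell_b(\delta)=w_b\,b$ is governed entirely by $w_a$ versus $w_b$. The whole lemma is thus equivalent to the monotonicity $w_2<w_3<\cdots<w_i>w_{i+1}>\cdots>w_0$ of the finite-type highest-root words as $c$ runs over the cycle. To establish each inequality I would locate the first position at which the prefixes of $w_a$ and $w_b$ diverge: both start with $1$, and feeding the costandard factorization of $w_c$ into Leclerc's algorithm expresses its prefix as the $\SL$-word of a sub-arch about the node $1$. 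I would then induct on the arch length, using Proposition~\ref{prop:fin.convex} together with Lemma~\ref{Sasha's proposition} to sandwich the $\SL$-word of each relevant sub-arch between the $\SL$-words of its two pieces; this pins down the letter at the point of divergence and yields the two inequalities in the regimes $1<a<b\le i$ and $i\le a<b\le 0$.

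The conceptual reason the sequence peaks exactly at $c=i$ is that $w_i$ is the unique word in the family whose finite-type support $\wI\setminus\{i\}$ omits the \emph{second}-smallest letter $i$, while every other $w_c$ has both $1$ and $i$ in its support. After the forced initial $1$, a lexicographically large continuation wants a large next letter; the presence of the small letter $i$ in the support of $w_c$ (for $c\ne i$) forces it to surface early and depress the word, whereas its absence from $w_i$ leaves room for a strictly larger prefix, making $w_i$ the lexicographic maximum. Making this precise, and more generally controlling exactly how far the small letters $1<i<\cdots$ propagate into each $w_c$, is the main obstacle: unlike the standard order of Theorem~\ref{thm:sln-standard}, the finite-type highest-root word for an arbitrary order has no simple closed form (its costandard split need not occur at the minimal letter), so the point of first divergence must be extracted inductively from convexity rather than read off a formula. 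I expect the bulk of the work to be this prefix analysis, after which the two monotonicity statements follow by the same sandwiching argument applied to the sub-arches cut out by $a$ and $b$.
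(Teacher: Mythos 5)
Your setup is sound and runs parallel to the paper's strategy: the reduction to finite type $A_n$ by deleting the node $c$, the observation that the appended final letters $a,b$ never decide the comparison (since $w_a\ne w_b$ have equal length $n$), the reliance on Lemma~\ref{Sasha's proposition}, and the correct identification of the second-smallest letter $i$ as the reason the sequence peaks at $c=i$. But there is a genuine gap exactly where you flag it yourself: the ``prefix analysis'' locating the first divergence of $w_a$ and $w_b$ is never carried out. The sandwiching you propose -- convexity giving $\SL(\gamma_1)<\SL(\gamma_1+\gamma_2)<\SL(\gamma_2)$ for the pieces of each sub-arch -- only brackets each word between two others; it does not by itself determine the letter at the point of divergence, so neither monotonicity statement actually follows from what you wrote. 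As it stands, the proposal is a plan with its central combinatorial step missing.

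Moreover, your stated reason for the difficulty -- that the finite-type word ``has no simple closed form'' because ``its costandard split need not occur at the minimal letter'' -- is incorrect, and correcting it is precisely how the paper closes the gap. Every $\ell_c(\delta)$ has all $n+1$ letters pairwise distinct, and for such words a suffix is Lyndon iff it begins with the minimum of its letters (Definition~\ref{def:lyndon-2}); hence the costandard factorization $\ell_c(\delta)=\ell_{1,c}\ell_{2,c}$ is \emph{forced} to split at the unique occurrence of the second-smallest letter $i$. For degree reasons $\ell_{2,c}$ is then the finite-type word $\SL(\alpha_{c\to i})=i\,\overline{i-1}\dots\overline{c+1}\,c$ (resp.\ $\SL(\alpha_{i\to c})=i\,\overline{i+1}\dots\overline{c-1}\,c$), an explicit monotone run, while Lemma~\ref{Sasha's proposition} bounds the prefix above by $\SL(\alpha_{\overline{i+1}\to\overline{c-1}})$ (resp.\ $\SL(\alpha_{\overline{c+1}\to\overline{i-1}})$); since the resulting candidate word is Lyndon with nonvanishing standard bracketing, the maximality in the Leclerc-type algorithm shows the bound is attained, yielding the semi-explicit formulas \eqref{eq:el-word-explicit-1}--\eqref{eq:el-word-explicit-2}. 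Once these are in hand, both inequalities follow from a \emph{single} application of Lemma~\ref{Sasha's proposition} to the nested prefix arches sharing the common endpoint $\overline{i+1}$ (resp.\ $\overline{i-1}$), together with the easy observation that if one prefix is a proper prefix of the other, the next letter of the shorter word is $i$, which is smaller than every letter occurring past the first position of the longer prefix. Supplying this identification of the split point and the run-shaped suffix is what your outline would need to become a proof.
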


We need a simple fact about Lalonde-Ram's bijection~\eqref{eqn:associated word} for a finite type~$A$:

\begin{claim}\label{claim:LR-A-endpoint}
(1) If $b=\min\{a,\overline{a+1},\ldots,\overline{b-1},b\}$, then
$\SL(\alpha_{a \to b}) = b\, \overline{b-1} \dots \overline{a+1}\, a$.

\noindent
(2) If $a=\min\{a,\overline{a+1},\ldots,\overline{b-1},b\}$, then
$\SL(\alpha_{a \to b}) = a\, \overline{a+1} \dots \overline{b-1}\, b$.
\end{claim}

\begin{proof}[Proof of Lemma~\ref{deltaorder}]

The proof is based on the more explicit formulas for $\ell_c(\delta)$:

\noindent
$\bullet$ Case 1: $1\prec c\prec i$.

Consider the costandard factorization $\SL(\alpha_{\overline{c+1}\to \overline{c-1}})=\ell_{1,c}\ell_{2,c}$.
As $\ell_{1,c}$ starts with $1$, $\ell_{2,c}$ starts with $i$, $\deg \ell_{1,c}, \deg\ell_{2,c} \in \wDelta^+$,
and $\deg \ell_{1,c} + \deg\ell_{2,c} = \alpha_{\overline{c+1}\to \overline{c-1}}$, we see that
$\ell_{2,c}=\SL(\alpha_{\overline{c+1}\to e})$ and $\ell_{1,c}=\SL(\alpha_{\overline{e+1}\to \overline{c-1}})$
for some $e\succeq i$. For $e\succ i$, we have
  $\SL(\alpha_{\overline{e+1}\to \overline{c-1}}) < \SL(\alpha_{\overline{i+1}\to \overline{c-1}})$
by Lemma~\ref{Sasha's proposition}. Therefore, we have:
\begin{equation*}
  \SL(\alpha_{\overline{e+1}\to \overline{c-1}})\SL(\alpha_{\overline{c+1}\to e}) <
  \SL(\alpha_{\overline{i+1}\to \overline{c-1}})\SL(\alpha_{\overline{c+1} \to i}) \qquad \forall\, e\succ i \,.
\end{equation*}
Since the word $\SL(\alpha_{\overline{i+1}\to \overline{c-1}})\SL(\alpha_{\overline{c+1} \to i})$ is Lyndon
(as it starts with the smallest letter $1$ which appears only once) and its bracketing is clearly nonzero,
we~conclude:
\begin{equation*}
  \SL(\alpha_{\overline{c+1}\to \overline{c-1}})=
  \SL(\alpha_{\overline{i+1}\to \overline{c-1}}) \SL(\alpha_{\overline{c+1} \to i})=
  \SL(\alpha_{\overline{i+1}\to \overline{c-1}})\, i\, \overline{i-1} \dots \overline{c+1} \,,
\end{equation*}
with the last equality due to Claim~\ref{claim:LR-A-endpoint}. Thus, we obtain:
\begin{equation}\label{eq:el-word-explicit-1}
  \ell_c(\delta)=\SL(\alpha_{\overline{i+1}\to \overline{c-1}})\, i\, \overline{i-1} \dots \overline{c+1}\, c
  \qquad \forall \ 1 \prec c \preceq i \,.
\end{equation}
The desired inequality $\ell_a(\delta)<\ell_b(\delta)$ for $1\prec a\prec b \preceq i$ follows now
from Lemma~\ref{Sasha's proposition}.

\noindent
$\bullet$  Case 2: $i \prec c \preceq 0$.

Arguing as in the previous case, we see that the costandard factorization
$\SL(\alpha_{\overline{c+1}\to \overline{c-1}})=\ell_{1,c}\ell_{2,c}$ has the form
$\ell_{2,c}=\SL(\alpha_{e\to \overline{c-1}})$ and $\ell_{1,c}=\SL(\alpha_{\overline{c+1}\to \overline{e-1}})$
for some $1 \prec e \preceq i$. For $1\prec e \prec i$, we have
  $\SL(\alpha_{\overline{c+1}\to \overline{e-1}}) < \SL(\alpha_{\overline{c+1}\to \overline{i-1}})$
by Lemma~\ref{Sasha's proposition}, and so
  $\SL(\alpha_{\overline{c+1}\to \overline{e-1}}) \SL(\alpha_{e\to \overline{c-1}}) <
   \SL(\alpha_{\overline{c+1}\to \overline{i-1}}) \SL(\alpha_{i\to \overline{c-1}})$.
As the word $\SL(\alpha_{\overline{c+1}\to \overline{i-1}}) \SL(\alpha_{i\to \overline{c-1}})$ is Lyndon
(as it starts with the smallest letter $1$ which appears only once) and clearly has a nonzero bracketing,
we conclude:
\begin{equation*}
  \SL(\alpha_{\overline{c+1}\to \overline{c-1}})=
  \SL(\alpha_{\overline{c+1}\to \overline{i-1}}) \SL(\alpha_{i\to \overline{c-1}}) =
  \SL(\alpha_{\overline{c+1}\to \overline{i-1}})\, i\, \overline{i+1} \dots \overline{c-1}
\end{equation*}
with the last equality due to Claim~\ref{claim:LR-A-endpoint}. Thus, we obtain:
\begin{equation}\label{eq:el-word-explicit-2}
  \ell_c(\delta)=\SL(\alpha_{\overline{c+1}\to \overline{i-1}})\, i\, \overline{i+1} \dots \overline{c-1}\, c
  \qquad \forall \ i \prec c \preceq 0 \,.
\end{equation}
The desired inequality $\ell_a(\delta) > \ell_b(\delta)$ for $i \preceq a \prec b$ follows from
Lemma~\ref{Sasha's proposition} again.
\end{proof}

For $a,b\in \wI$, we introduce $sgn(a-b)\in \{-1,0,1\}$ via:
\begin{equation}\label{eq:sgn-symbol}
  sgn(a-b):=
  \begin{cases}
    1  & \mathrm{if}\ a\succ b \\
    -1 & \mathrm{if}\ a\prec b \\
    0 & \mathrm{if}\ a=b
  \end{cases}.
\end{equation}
The following generalization of Theorem~\ref{thm:sln-standard} is the main result of this section:

\begin{theorem}\label{thm:sln-general}
The $\aslaws$ for $\widehat{\fsl}_{n+1}\ (n\geq 2)$ with any order $<$ on $\wI=\{0,1,\dots,n\}$
satisfying~\eqref{eq:minimal letters} are described by the formulas below ($k\geq 1$):
\begin{align}
  & \Big\{ \SL_1(k\delta), \ldots, \SL_n(k\delta) \Big\} =
    \Big\{ \SL(\alpha_{\overline{c+1}\to \overline{c-1}})\underbrace{\ell_{c+sgn(i-c)}(\delta)}_{(k-1) \, \rtim}c \,\Big|\,
        c\in \wI\setminus \{1\} \Big\} ,
    \label{eq:general-one} \\
  & \SL(k\delta+\alpha_{a\to b}) = \underbrace{\ell_{b+1}(\delta)}_{k \, \rtim}b(b-1)\dots a \,,
    \quad \mathrm{for} \ 1 \prec a \preceq b \prec i ,
    \label{eq:general-three} \\
  & \SL(k\delta+\alpha_{a\to b}) = \underbrace{\ell_{\overline{a-1}}(\delta)}_{k \, \rtim}a\, \overline{a+1} \dots b \,,
    \quad \mathrm{for} \ i \prec a \preceq b \preceq 0 ,
    \label{eq:general-two}
\end{align}
\begin{multline} \label{eq:general-four}
  \SL(k\delta+\alpha_{a\to b}) = \hfill \ \mathrm{for}\ 1 \prec a \prec i \prec b \\
  \begin{cases}
    \underbrace{\ell_i(\delta)}_{\frac{k}{3} \, \rtim} i \underbrace{\ell_i(\delta)}_{\frac{k}{3} \, \rtim}
    \overline{i+1}\dots b \underbrace{\ell_i(\delta)}_{\frac{k}{3} \, \rtim}\overline{i-1}\dots a
      & \mathrm{if}\ 3\mid k \\
    \underbrace{\ell_i(\delta)}_{\frac{k+1}{3} \, \rtim} \overline{i-1}\dots a \underbrace{\ell_i(\delta)}_{\frac{k-2}{3} \, \rtim}
    i \underbrace{\ell_i(\delta)}_{\frac{k+1}{3} \, \rtim} \overline{i+1}\dots b
      & \mathrm{if}\ 3\mid \overline{k+1} \\
    \underbrace{\ell_i(\delta)}_{\frac{k+2}{3} \, \rtim} \overline{i+1}\dots b \underbrace{\ell_i(\delta)}_{\frac{k-1}{3} \, \rtim}
    i \underbrace{\ell_i(\delta)}_{\frac{k-1}{3} \, \rtim}\overline{i-1}\dots a
      & \mathrm{if}\ 3\mid \overline{k+2}
  \end{cases},
  \quad \mathrm{if}\ \overline{i-1}<\overline{i+1}\\
  \begin{cases}
    \underbrace{\ell_i(\delta)}_{\frac{k}{3} \, \rtim} i \underbrace{\ell_i(\delta)}_{\frac{k}{3} \, \rtim}
    \overline{i-1}\dots a \underbrace{\ell_i(\delta)}_{\frac{k}{3} \, \rtim} \overline{i+1}\dots b
      & \mathrm{if}\ 3\mid k  \\
    \underbrace{\ell_i(\delta)}_{\frac{k+1}{3} \, \rtim} \overline{i+1}\dots b \underbrace{\ell_i(\delta)}_{\frac{k-2}{3} \, \rtim}
    i \underbrace{\ell_i(\delta)}_{\frac{k+1}{3} \, \rtim} \overline{i-1}\dots a
      & \mathrm{if}\ 3\mid \overline{k+1} \\
    \underbrace{\ell_i(\delta)}_{\frac{k+2}{3} \, \rtim} \overline{i-1}\dots a \underbrace{\ell_i(\delta)}_{\frac{k-1}{3} \, \rtim}
    i \underbrace{\ell_i(\delta)}_{\frac{k-1}{3} \, \rtim} \overline{i+1}\dots b
      & \mathrm{if}\ 3\mid \overline{k+2}
  \end{cases},
  \quad \mathrm{if}\ \overline{i-1}>\overline{i+1}
\end{multline}
\begin{equation} \label{genreal four 2}
  \SL(k\delta+\alpha_{i\to b}) =
  \begin{cases}
    \underbrace{\ell_i(\delta)}_{\frac{k}{2} \, \rtim} i \underbrace{\ell_i(\delta)}_{\frac{k}{2} \, \rtim} \overline{i+1}\dots b
      & \mathrm{if}\ 2\mid k \\
    \underbrace{\ell_i(\delta)}_{\frac{k+1}{2} \, \rtim} \overline{i+1}\dots b \underbrace{\ell_i(\delta)}_{\frac{k-1}{2} \, \rtim} i
      & \mathrm{if}\ 2\nmid k
  \end{cases},
  \quad \mathrm{for}\ i \prec b \preceq 0
\end{equation}
\begin{equation} \label{genreal four 3}
  \SL(k\delta+\alpha_{a\to i}) =
  \begin{cases}
    \underbrace{\ell_i(\delta)}_{\frac{k}{2} \, \rtim} i \underbrace{\ell_i(\delta)}_{\frac{k}{2} \, \rtim} \overline{i-1}\dots a
      & \mathrm{if}\ 2\mid k \\
    \underbrace{\ell_i(\delta)}_{\frac{k+1}{2} \, \rtim} \overline{i-1}\dots a \underbrace{\ell_i(\delta)}_{\frac{k-1}{2} \, \rtim} i
      & \mathrm{if}\ 2\nmid k
  \end{cases},
  \quad \mathrm{for}\ 1 \prec a \prec i
\end{equation}
\begin{equation}\label{genreal four 4}
  \SL(k\delta+\alpha_{i})=\underbrace{\ell_i(\delta)}_{k \, \rtim}i
\end{equation}
and finally a slightly less explicit formula
\begin{multline}\label{eq:general-five}
  \SL(k\delta+\alpha_{b\to a}) = \ell_{1}\underbrace{\ell_{b\to a}(\delta)}_{(k-1) \, \rtim}\ell_{2} \,,
    \quad \mathrm{for} \ 1\in \big[b\to \overline{a+1}\big) \\
  \mathrm{where}\ \SL(\delta+\alpha_{b\to a})=\ell_{1}\ell_{2}\ \mathrm{is\ the\ costandard\ factorization}\
    \eqref{eqn:cost.factor}\\
  \mathrm{and}\ \ell_{b\to a}(\delta) \ \mathrm{is\ one\ of}\ \ell_c(\delta)\ \mathrm{such\ that}\
    \SL(2\delta+\alpha_{b\to a})=\ell_{1} \ell_{b\to a}(\delta) \ell_{2}.
\end{multline}
\end{theorem}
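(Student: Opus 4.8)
The plan is to prove Theorem~\ref{thm:sln-general} by induction on the height $\hgt(\alpha)$, following the architecture of the proof of Theorem~\ref{thm:sln-standard} but replacing every explicit manipulation of the special word $10n\ldots32$ by the intrinsic data furnished by Lemmas~\ref{Sasha's proposition} and~\ref{deltaorder}. As in the standard-order case, the base of the induction is all roots of height $<2h$, where $h=n+1=\hgt(\delta)$. For the genuinely finite roots $\alpha_{a\to b}$ (height $<h$) I would reduce, exactly as in the proof of Lemma~\ref{Sasha's proposition}, to the finite type $A$ Lalonde--Ram bijection and invoke Rosso's convexity (Proposition~\ref{prop:fin.convex}); this pins down each $\SL(\alpha_{a\to b})$ as the appropriate monotone reading of the arch and simultaneously records the matrix value $\sb[\SL(\alpha_{a\to b})]\doteq E_{p,q}t^m$ needed later. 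The degree-$\delta$ words are supplied by Lemma~\ref{deltaorder}: the $n$ candidates $\ell_c(\delta)$ ($c\in\wI\setminus\{1\}$) are Lyndon, their lexicographic order is the unimodal chain of that lemma, and their bracketings are the Cartan elements $\sb[\ell_c(\delta)]\doteq(E_{pp}-E_{qq})t$; since these $n=\rk(\fsl_{n+1})$ elements are linearly independent they span $\fh\otimes t$, so the $\ell_c(\delta)$ are precisely $\SL_1(\delta),\dots,\SL_n(\delta)$, establishing~\eqref{eq:general-one} for $k=1$. The remaining $k=1$ real roots $\delta+\alpha_{a\to b}$ are then treated by the generalized Leclerc algorithm of Proposition~\ref{prop:generalized Leclerc}, as in Part II of the induction base of Theorem~\ref{thm:sln-standard}.

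For the induction step I would verify the stated formulas for a root $\alpha$ with $(d+1)h\le\hgt(\alpha)<(d+2)h$, assuming them for all smaller heights. For each family~\eqref{eq:general-three}--\eqref{eq:general-five} the mechanism is uniform: enumerate the decompositions $\alpha=\gamma_1+\gamma_2$ into positive roots, read off the candidate concatenations $\SL_*(\gamma_1)\SL_*(\gamma_2)$ from the inductive hypothesis, select the lexicographically largest among those with nonzero bracket, and confirm that it equals the claimed word by computing its standard bracketing in $\widetilde{\fg}$, which lands (up to a nonzero scalar, in the sense of~\eqref{eq:doteq}) in a single root space $\BC\cdot E_{p,q}t^m$. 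The decisive simplification, exactly as in the treatment of~\eqref{eq:stand-three} in the standard case, is that each claimed word is lexicographically $\ge\ell_i(\delta)$ (the maximal degree-$\delta$ word, by Lemma~\ref{deltaorder}) or is a prefix of it; hence only decompositions in which \emph{both} factors $\SL_*(\gamma_1),\SL_*(\gamma_2)$ are $\ge\ell_i(\delta)$ or are prefixes of it can possibly compete, and the inductive formulas together with Lemma~\ref{Sasha's proposition} discard all others. This is where the second-smallest letter $i$, and the word $\ell_i(\delta)$, play their pivotal role: arches that straddle $i$ force $\ell_i(\delta)$ to be the repeated block.

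The periodicity is then a structural consequence of how a fixed block threads through the arch. For arches touching $i$ on only one side (families~\eqref{genreal four 2} and~\eqref{genreal four 3}) the block $\ell_i(\delta)$ admits two alternating insertion positions, giving period $2$; for arches straddling $i$ (family~\eqref{eq:general-four}) there are three admissible insertion patterns, yielding the period $3$ encoded by the three residue cases, with convexity (Proposition~\ref{prop:fin.convex}, through Lemma~\ref{Sasha's proposition}) dictating which fragment of the finite-type aslaw is placed to the left and which to the right of each copy of $\ell_i(\delta)$, and hence selecting the two sub-cases $\overline{i-1}<\overline{i+1}$ versus $\overline{i-1}>\overline{i+1}$. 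For the imaginary roots~\eqref{eq:general-one} I would argue as in part~(b) of Proposition~\ref{prop:generalized Leclerc}: the $n$ candidate words $\SL(\alpha_{\overline{a+1}\to\overline{a-1}})\,\ell_{a+sgn(i-a)}(\delta)^{\,k-1}a$ have Cartan-valued bracketings $\doteq(E_{pp}-E_{qq})t^k$, they are the $n$ lexicographically largest concatenations with linearly independent brackets, and they span $\fh\otimes t^k$; the asserted finiteness of the determining data (all aslaws fixed by those of length $<2h$) is immediate, since every repeated block is a degree-$\delta$ word. Finally,~\eqref{eq:general-five} follows by extracting the costandard factorization $\ell_1\ell_2$ of the already-known $\SL(\delta+\alpha_{b\to a})$ and the block $\ell_{b\to a}(\delta)$ from $\SL(2\delta+\alpha_{b\to a})$, and then propagating by the same induction.

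The main obstacle I expect is the induction step for the straddling family~\eqref{eq:general-four}: here the list of decompositions is largest, the period-$3$ bookkeeping must be matched exactly to the three residue classes of $k$, and one must rule out a genuinely large number of competing concatenations. The essential tools are Lemma~\ref{deltaorder} (to compare words built from repeated $\delta$-blocks) and Lemma~\ref{Sasha's proposition} together with Rosso's convexity (to fix the internal order of the finite-type fragments, and thereby their left/right placement around each copy of $\ell_i(\delta)$); once the maximal word is correctly identified, the bracketing computations that certify non-vanishing, and linear independence in the imaginary case, are routine.
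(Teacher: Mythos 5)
Your overall strategy coincides with the paper's: induction on height through the generalized Leclerc algorithm of Proposition~\ref{prop:generalized Leclerc}, with Lemma~\ref{Sasha's proposition} (i.e., Rosso's convexity transported to finite type) and Lemma~\ref{deltaorder} governing all comparisons among the blocks $\ell_c(\delta)$, and loop-realization bracketing computations certifying each selected word. However, your ``decisive simplification'' is stated incorrectly: it is \emph{not} true that every claimed word is $\geq \ell_i(\delta)$ or a prefix of it. For the families \eqref{eq:general-three} and \eqref{eq:general-two} the claimed word begins with $\ell_{\overline{b+1}}(\delta)$, resp.\ $\ell_{\overline{a-1}}(\delta)$, and as soon as the order has $i\geq 4$ one can take $1<a\leq b$ with $\overline{b+1}\neq i$, in which case Lemma~\ref{deltaorder} gives $\ell_{\overline{b+1}}(\delta)<\ell_i(\delta)$; since all degree-$\delta$ words have the same length $n+1$, neither is a prefix of the other, so your filter would discard the true maximal candidate itself. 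The paper's filtering is done per family, relative to the claimed leading $\delta$-block, and for decompositions involving an imaginary summand it proceeds instead by discarding every $\ell_c(\delta)$ whose Cartan-valued bracket \eqref{eq:el-word-bracketing} commutes with the bracket of the real factor; your $\ell_i(\delta)$-filter is valid only for the straddling families \eqref{eq:general-four}--\eqref{genreal four 4}, whose words genuinely start with the maximal $\delta$-word.

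Two further soft spots. For $\SL_*(\delta)$, linear independence (even spanning of $\fh\otimes t$) of the $n$ brackets $\sb[\ell_c(\delta)]$ does not by itself identify the aslaws: one must exclude larger standard words. The paper does this via the costandard factorization: any degree-$\delta$ standard Lyndon word is forced to be $\SL(\alpha_{\overline{b+1}\to \overline{a-1}})\SL(\alpha_{a\to b})$; the cases $b=i$ and $a=i$ yield exactly $\ell_a(\delta)$ and $\ell_b(\delta)$ by \eqref{eq:el-word-explicit-1}--\eqref{eq:el-word-explicit-2}, while for $a<i<b$ the word is smaller than $\ell_a(\delta),\ell_i(\delta),\ell_b(\delta)$ (Lemma~\ref{Sasha's proposition}) yet its bracket $(E_{a,a}-E_{b+1,b+1})t$ lies in the span of theirs, contradicting standardness; the same ``bracket in the span of brackets of larger words'' contradiction, not mere independence, drives \eqref{eq:general-one} for $k\geq 2$. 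Finally, your remark that once the maximal word is identified the non-vanishing of its bracket is routine glosses over the one genuinely delicate case of \eqref{eq:general-four}: when $3\mid \overline{k+2}$ the lexicographically largest admissible candidate, ending in $\ldots\underbrace{\ell_i(\delta)}_{\frac{k-1}{3}\,\rtim}\overline{i-1}\dots a\underbrace{\ell_i(\delta)}_{\frac{k-1}{3}\,\rtim}i$, has \emph{vanishing} standard bracketing, and the aslaw is the second-largest candidate. Your selection rule (``largest with nonzero bracket'') is formally consistent with this, but the case must be detected and checked explicitly, as the paper does; as written, your sketch suggests the top candidate always survives.
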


\begin{remark}\label{rem:order of 3 parts}
(a) The implicit words $\ell_1$ and $\ell_2$ providing the costandard factorization of $\SL(\delta+\alpha_{b\to a})$
in~\eqref{eq:general-five} can actually be described explicitly (see Lemma~\ref{lemma root with 1}):
\begin{equation*}
\begin{split}
  & \ell_1=\SL(\alpha_{b\to \overline{b-2}}) \quad \mathrm{and} \quad \ell_2=\SL(\alpha_{\overline{b-1}\to a})
    \quad \mathrm{if} \quad \SL(\alpha_{\overline{b-1}\to a}) > \SL(\alpha_{b\to \overline{a+1}}) \,,\\
  & \ell_1=\SL(\alpha_{ \overline{a+2}\to a}) \quad \mathrm{and} \quad \ell_2=\SL(\alpha_{b\to \overline{a+1}})
    \quad \mathrm{if} \quad \SL(\alpha_{\overline{b-1}\to a}) < \SL(\alpha_{b\to \overline{a+1}}) \,.
\end{split}
\end{equation*}

\noindent
(b) Likewise, the word $\ell_{b\to a}(\delta)$ featuring in~\eqref{eq:general-five} can be characterized
as the lexicographically largest among those $\ell_c(\delta)$ that satisfy $[\sb[\ell_1],\sb[\ell_c(\delta)]]\ne 0$.
Explicitly, as follows from the proof below, we have (cf.~part~(a) above):
\begin{equation}\label{eq:el-ba-explicit}
  \ell_{b\to a}(\delta)=
  \begin{cases}
     \ell_{b-1+sgn(i-(b-1))}(\delta) & \mathrm{if}\
       \SL(\alpha_{\overline{b-1}\to a}) > \SL(\alpha_{b\to \overline{a+1}})\\
     \ell_{a+1+sgn(i-(a+1))}(\delta) & \mathrm{if}\
       \SL(\alpha_{\overline{b-1}\to a}) < \SL(\alpha_{b\to \overline{a+1}})
  \end{cases} \,.
\end{equation}

\noindent
(c) Let us also record the explicit order between the words $\ell_1,\ell_2,\ell_{b\to a}(\delta)$, cf.~\eqref{eq:auxiliary}:
\begin{equation*}
  \ell_1 < \ell_2 \leq \ell_{b\to a}(\delta) \,.
\end{equation*}

\noindent
(d) For the standard order~\eqref{eq:stand-order}, we clearly recover the formulas from
our previous Theorem~\ref{thm:sln-standard}. We also note that the proof below
significantly simplifies when $i=2$.

\noindent
(e) Finally, we note $\SL(\alpha_{a\to b})$ can be easily reconstructed using either of the
algorithms presented before Lemma~\ref{lemma root with 1}, with $1$ replaced by
$\min\{a,\overline{a+1},\ldots,\overline{b-1},b\}$.
\end{remark}

\begin{remark}\label{rem:el-words-Lyndon}
(a) In the base of induction below we prove that
\begin{equation}\label{eq:ell-words-Lyndon}
  \big\{ \SL_1(\delta),\ldots,\SL_n(\delta) \big\} =
  \big\{ \ell_c(\delta) \,|\, c\in \wI\setminus \{1\} \big\}.
\end{equation}
As easily follows from~(\ref{eq:el-word-explicit-1},~\ref{eq:el-word-explicit-2}),
their standard bracketings are:
\begin{equation}\label{eq:el-word-bracketing}
  \sb[\ell_c(\delta)] \doteq
  \begin{cases}
    (E_{i+1,i+1}-E_{c,c})t & \mathrm{if}\ 1 \prec c \preceq i\\
    (E_{i,i}-E_{c+1,c+1})t & \mathrm{if}\ i \prec c \preceq 0
  \end{cases}.
\end{equation}

\noindent
(b) The standard bracketing $\sb[\SL(\alpha_{a\to b})]$ for $1,i\notin [a\to \overline{b+1})$ is a nonzero
multiple of $E_{a,b+1}$ if $b\ne 0$, $E_{a1}t$ if $a \prec b=0$, $E_{n+1,1}t$ if $a=b=0$. Thus, the lexicographically
largest word among $\SL_*(\delta)$ whose bracketing $\sb[\SL_*(\delta)]$ does not commute with
$\sb[\SL(\alpha_{a\to b})]$ is $\ell_{\overline{b+1}}(\delta)$ if $a \prec i$ and
$\ell_{\overline{a-1}}(\delta)$ if $a \succ i$, due to Lemma~\ref{deltaorder} and~\eqref{eq:el-word-bracketing}.
\end{remark}

\begin{proof}[Proof of Theorem~\ref{thm:sln-general}]
The proof proceeds by induction on $k$.

\medskip
\noindent
\underline{Base of Induction}

The base of induction is $k=1$. In this case, the nontrivial cases are formulas \eqref{eq:general-one}
for $\SL_*(\delta)$ and~\eqref{eq:general-three}--\eqref{genreal four 4} for $\SL(\delta+\alpha_{a\to b})$
with $1 \notin [a\to \overline{b+1})$.

\medskip
\noindent
$\bullet$ Proof of~\eqref{eq:general-one} for $k=1$.

For any $1\leq r\leq n$, consider the costandard factorization $\SL_r(\delta)=\ell_1\ell_2$. For degree reasons,
we have $\ell_1=\SL(\alpha_{\overline{b+1}\to \overline{a-1}})$, $\ell_2=\SL(\alpha_{a\to b})$ for some
$b\ne \overline{a-1}$ such that $1 \in [\overline{b+1} \to a)$ and $i \in [a \to \overline{b+1})$.
If $b=i$, then $1 \prec a \preceq i$ and
\begin{equation*}
  \SL_r(\delta)=\SL(\alpha_{\overline{i+1}\to \overline{a-1}}) \SL(\alpha_{a\to i})=
  \SL(\alpha_{\overline{i+1}\to \overline{a-1}})\, i\, \overline{i-1} \ldots a = \ell_a(\delta) \,,
\end{equation*}
due to~\eqref{eq:el-word-explicit-1} and Claim~\ref{claim:LR-A-endpoint}. Likewise, if $a=i$, then $i \prec b$ and
\begin{equation*}
  \SL_r(\delta)=\SL(\alpha_{\overline{b+1}\to \overline{i-1}}) \SL(\alpha_{i\to b})=
  \SL(\alpha_{\overline{b+1}\to \overline{i-1}})\, i\, \overline{i+1} \ldots b = \ell_b(\delta) \,,
\end{equation*}
due to~\eqref{eq:el-word-explicit-2} and Claim~\ref{claim:LR-A-endpoint}. Finally, if $1 \prec a \prec i \prec b$,
then $\SL_r(\delta)<\ell_c(\delta)$ for any $c \in [a\to \overline{b+1})$, due to Lemma~\ref{Sasha's proposition}
and explicit formulas~(\ref{eq:el-word-explicit-1},~\ref{eq:el-word-explicit-2}). On the other hand,
$\sb[\SL_r(\delta)]=[\sb[\ell_1],\sb[\ell_2]]\doteq (E_{a,a}-E_{b+1,b+1})t$, while the standard bracketing
$\sb[\ell_c(\delta)]$ is given by~\eqref{eq:el-word-bracketing}. Hence, $\sb[\SL_r(\delta)]$ is a linear combination
of standard bracketings of the larger words $\ell_a(\delta), \ell_i(\delta),\ell_b(\delta)$,
a contradiction with $\SL_r(\delta)$ being standard. Thus, any degree $\delta$ $\aslaw$ is of the form
$\ell_c(\delta)$ for $c\ne 1$. This completes the proof of~\eqref{eq:ell-words-Lyndon}, as we have $n$ such~words.

\medskip
\noindent
$\bullet$ Proof of~\eqref{eq:general-three}--\eqref{genreal four 4} for $k=1$.

We skip these proofs as they coincide with those in the step of induction below.

\medskip
\noindent
\underline{Step of Induction}

Let us now prove the step of induction, proceeding by the height of a root. We thus verify
formulas~\eqref{eq:general-one}--\eqref{eq:general-five} for $\aslaws$ $\SL_*(\alpha)$ with $k=r+1$ assuming
the validity of these formulas for $\SL_*(\beta)$ with $\hgt(\beta)<\hgt(\alpha)$.

\medskip
\noindent
\textbf{Notation:} In what follows, we shall denote $[a\to \overline{b+1})$ from~\eqref{eq:arch-elements}
simply by $[a;b]$:
\begin{equation*}
  [a;b]:=\big\{ a, \overline{a+1}, \ldots, \overline{b-1}, b \big\} \, .
\end{equation*}

\noindent
$\bullet$ Proof of~\eqref{eq:general-one} for $k=r+1$.

We consider only decompositions of the form
  $(r+1)\delta=(r_1\delta+\alpha_{a \to b})+((r-r_1)\delta+\alpha_{\overline{b+1} \to \overline{a-1}})$,
due to Remark~\ref{rem:gen Lyndon rmk}. We may further assume that $1\in [\overline{b+1};\overline{a-1}]$.
We start with the following useful result (which will be strengthened in Lemma~\ref{lemma root with 1}):

\begin{claim}\label{bracketing with 1}
If $\ell_1\ell_2$ is the costandard factorization~\eqref{eqn:cost.factor} of
$\SL(\delta+\alpha_{\overline{b+1} \to \overline{a-1}})$ and $1\in [\overline{b+1};\overline{a-1}]$, then
both words $\ell_1$ and $\ell_2$ contain all the letters located on the (counterclockwise oriented) arch
$[\overline{b+1};\overline{a-1}]$.
\end{claim}

\begin{proof}[Proof of Claim~\ref{bracketing with 1}]
First, we note that both $\ell_1,\ell_2$ start with $1$. If $\ell_1$ does not contain all the
letters from $[\overline{b+1};\overline{a-1}]$, then it consists only of letters from $c$ to $d$,
where $1\in [c;d] \subsetneq [\overline{b+1};\overline{a-1}]$. Thus,
$\ell_1<\SL(\alpha_{\overline{b+1} \to \overline{a-1}})$ by Lemma~\ref{Sasha's proposition}, hence
\begin{equation}\label{eq:contradiction with costand}
  \SL(\delta+\alpha_{\overline{b+1} \to \overline{a-1}})=\ell_1\ell_2<
  \SL(\alpha_{\overline{b+1} \to \overline{a-1}})\ell_{e(i;a,b)}(\delta) \,,
\end{equation}
with $e(i;a,b):=a$ if $a\preceq i$ and $e(i;a,b):=b$ if $i\prec a\preceq b$.
However, $\SL(\alpha_{\overline{b+1} \to \overline{a-1}})<\ell_{e(i;a,b)}(\delta)$ by
Lemma~\ref{Sasha's proposition} and their standard bracketings do not commute by~\eqref{eq:el-word-bracketing}:
  $[\sb[\SL(\alpha_{\overline{b+1} \to \overline{a-1}})],\sb[\ell_{e(i;a,b)}(\delta)]]\ne 0$.
Thus, the concatenated word $\SL(\alpha_{\overline{b+1} \to \overline{a-1}})\ell_{e(i;a,b)}(\delta)$
appears in the set from the right-hand side of~\eqref{eq:generalized Leclerc} for the root
$\alpha=\delta+\alpha_{\overline{b+1} \to \overline{a-1}}$, contradicting~\eqref{eq:contradiction with costand}.

If $\ell_2$ does not contain all the letters from $[\overline{b+1};\overline{a-1}]$, then we apply precisely
the same argument to $\ell_2\ell_1$ and use the inequality $\ell_1\ell_2<\ell_2\ell_1$ to get a contradiction.
\end{proof}

For $r_1<r$, we have
  $\SL((r-r_1)\delta+\alpha_{\overline{b+1} \to \overline{a-1}})=
   \ell_1 \underbrace{\ell_{\overline{b+1}\to \overline{a-1}}(\delta)}_{(r-r_1-1) \, \mathrm{times}}\ell_2$
by the induction hypothesis, where $\ell_1\ell_2$ is the costandard factorization of
$\SL(\delta+\alpha_{\overline{b+1} \to \overline{a-1}})$. According to Claim~\ref{bracketing with 1}:
$\sb[\ell_1]\doteq E_{\overline{b+1},c}t^{1-\delta_{\overline{b+1},1}}$ for some
$c \in [a;b]$ or $\sb[\ell_1]\doteq E_{c,a}t$ for some $c \in [a+1;b]$. For any $d\in [a;b]$, one
of the roots $\deg\,\ell_1,\deg\,\ell_2\in \wDelta^+$ does not contain $\alpha_d$, which together
with $\ell_1<\ell_2$, Lemma~\ref{Sasha's proposition}, and Claim~\ref{bracketing with 1} implies:
\begin{equation}\label{eq:ell1-vs-d}
  \ell_1 \leq \SL(\alpha_{\overline{d+1}\to \overline{d-1}}) \,.
\end{equation}
Moreover, the equality in~\eqref{eq:ell1-vs-d} does hold only for $d=b$ if
$\SL(\alpha_{b \to \overline{a-1}})>\SL(\alpha_{\overline{b+1} \to a})$ and for $d=a$
if $\SL(\alpha_{b \to \overline{a-1}})<\SL(\alpha_{\overline{b+1} \to a})$, according to Lemma~\ref{lemma root with 1}.

Thus, if $a\ne b$ and $\SL(\alpha_{b \to \overline{a-1}})>\SL(\alpha_{\overline{b+1} \to a})$,
then for $d\in [a;\overline{b-1}]$ we have:
\begin{multline*}
  \SL((r-r_1)\delta+\alpha_{\overline{b+1} \to \overline{a-1}})\SL(r_1\delta+\alpha_{a \to b})<
   \SL(\alpha_{\overline{d+1}\to \overline{d-1}})< \\
   \SL(\alpha_{\overline{d+1}\to \overline{d-1}})\underbrace{\ell_{d+sgn(i-d)}(\delta)}_{r \, \rtim}d.
\end{multline*}
In the remaining case $d=b$ (with $a\ne b$ and $\SL(\alpha_{b \to \overline{a-1}})>\SL(\alpha_{\overline{b+1} \to a})$),
we have:
\begin{multline*}
  \SL((r-r_1)\delta+\alpha_{\overline{b+1} \to \overline{a-1}})\SL(r_1\delta+\alpha_{a \to b})=\\
  \SL(\alpha_{\overline{b+1} \to \overline{b-1}})
    \underbrace{\ell_{\overline{b+1} \to \overline{a-1}}(\delta)}_{(r-r_1-1) \, \rtim}
    \ell_2\, \SL(r_1\delta+\alpha_{a \to b}) < \\
  \SL(\alpha_{\overline{b+1} \to \overline{b-1}})
     \underbrace{\ell_{\overline{b+1} \to \overline{a-1}}(\delta)}_{r \; \mathrm{times}} b =
  \SL(\alpha_{\overline{b+1} \to \overline{b-1}})
     \underbrace{\ell_{b+sgn(i-b)}(\delta)}_{r \; \mathrm{times}} b \,,
\end{multline*}
cf.~\eqref{eq:el-ba-explicit},
with the inequality implied by
  $\ell_2<\ell_{\overline{b+1} \to \overline{a-1}}(\delta)$,
due to~\eqref{eq:auxiliary} and $a\ne b$. The case of $a\ne b$ and
$\SL(\alpha_{b \to \overline{a-1}})<\SL(\alpha_{\overline{b+1} \to a})$ is treated completely analogously.

On the other hand, if $a=b=d$ and $r_1\geq 0$, then
\begin{equation*}
  \SL(r_1\delta+\alpha_{a \to b})=\SL(r_1\delta+\alpha_a)=\underbrace{\ell_{a+sgn(i-a)}(\delta)}_{r_1 \, \rtim}a
\end{equation*}
by the induction hypothesis (applying~\eqref{eq:general-three} if $a<i$,~\eqref{eq:general-two} if $a>i$,
\eqref{genreal four 4} if $a=i$) and
  $\SL((r-r_1)\delta+\alpha_{\overline{b+1} \to \overline{a-1}})=
   \SL((r-r_1)\delta+\alpha_{\overline{a+1} \to \overline{a-1}})$
is given by
\begin{equation}\label{eq:aux-gen-one}
  \SL((r-r_1)\delta+\alpha_{\overline{a+1} \to \overline{a-1}})=
  \SL(\alpha_{\overline{a+1}\to \overline{a-1}})\underbrace{\ell_{a+sgn(i-a)}(\delta)}_{(r-r_1) \, \rtim} \,.
\end{equation}
To prove the latter claim, we first note that $\ell_1=\SL(\alpha_{\overline{a+1}\to \overline{a-1}})$ and
$\ell_2=\SL_?(\delta)$, while the lexicographically largest word $\SL_?(\delta)$ whose bracketing does not
commute with $\sb[\SL(\alpha_{\overline{a+1}\to \overline{a-1}})]\doteq E_{a+1,a}t^{1-\delta_{a,0}}$ is
precisely $\ell_{a+sgn(i-a)}(\delta)$, due to~\eqref{eq:el-word-bracketing} and Lemma~\ref{deltaorder}.
Therefore, $\ell_2=\ell_{a+sgn(i-a)}(\delta)$.
Second, we also claim that $\ell_{\overline{a+1} \to \overline{a-1}}(\delta)$ equals
$\ell_2=\ell_{a+sgn(i-a)}(\delta)$. To this end, recall that for
$\alpha=2\delta+\alpha_{\overline{a+1}\to \overline{a-1}}$ we have
\begin{equation}\label{eq:middls-vs-tail}
  \SL(\alpha)=\ell_1 \ell_{\overline{a+1} \to \overline{a-1}}(\delta) \ell_2 =
  \SL(\alpha_{\overline{a+1}\to \overline{a-1}})
  \ell_{\overline{a+1} \to \overline{a-1}}(\delta)\ell_{a+sgn(i-a)}(\delta) \,.
\end{equation}

\noindent
$\circ$
If $\ell_{\overline{a+1} \to \overline{a-1}}(\delta)<\ell_{a+sgn(i-a)}(\delta)$, then
  $\SL(\alpha_{\overline{a+1}\to \overline{a-1}})
      \ell_{\overline{a+1} \to \overline{a-1}}(\delta)\ell_{a+sgn(i-a)}(\delta)<
   \SL(\alpha_{\overline{a+1}\to \overline{a-1}})\ell_{a+sgn(i-a)}(\delta)\ell_{a+sgn(i-a)}(\delta)=:\wt{\ell}$
and the bracketing of the latter is
\begin{multline*}
  \sb[\wt{\ell}]=
  [\sb[\SL(\alpha_{\overline{a+1}\to \overline{a-1}})\ell_{a+sgn(i-a)}(\delta)],\sb[\ell_{a+sgn(i-a)}(\delta)]]\doteq \\
  [\sb[\SL(\alpha_{\overline{a+1}\to \overline{a-1}})],\sb[\ell_{a+sgn(i-a)}(\delta)]]\cdot t\ne 0 \,.
\end{multline*}
We get a contradiction, since $\wt{\ell}$ is one of the concatenations
(corresponding to the decomposition $\alpha=(\delta+\alpha_{\overline{a+1}\to \overline{a-1}})+(\delta)$)
in the right-hand side of~\eqref{eq:generalized Leclerc} for $\alpha$.

\noindent
$\circ$
If $\ell_{\overline{a+1} \to \overline{a-1}}(\delta)>\ell_{a+sgn(i-a)}(\delta)$, then the costandard
factorization~\eqref{eqn:cost.factor} of $\SL(\alpha)$ in~\eqref{eq:middls-vs-tail} must be of the form
$\SL(\alpha)=\ell'_1\ell'_2$ with $\ell'_2=\ell_{a+sgn(i-a)}(\delta)$ and
  $\ell'_1=\SL(\alpha_{\overline{a+1}\to \overline{a-1}})\ell_{\overline{a+1} \to \overline{a-1}}(\delta)$.
We get a contradiction again, since $\ell'_1$ is an $\SL$-word and so
  $\ell'_1=\SL(\deg\, \ell'_1)=\SL(\delta+\alpha_{\overline{a+1}\to \overline{a-1}})=
   \SL(\alpha_{\overline{a+1}\to \overline{a-1}})\ell_{a+sgn(i-a)}(\delta)$.

This completes our proof of~\eqref{eq:aux-gen-one}.
Assuming $\SL((r-r_1)\delta+\alpha_{\overline{b+1} \to \overline{a-1}})<\SL(r_1\delta+\alpha_{a \to b})$ and
combining all the above, we obtain the following inequalities for the corresponding concatenation
  $\ell:=\SL((r-r_1)\delta+\alpha_{\overline{b+1} \to \overline{a-1}})\SL(r_1\delta+\alpha_{a \to b})$:
\begin{equation}\label{eq:ell-vs-imaginary}
  \ell\leq \SL(\alpha_{\overline{d+1}\to \overline{d-1}})\underbrace{\ell_{d+sgn(i-d)}(\delta)}_{r \, \rtim}d
  \qquad  \forall\, d \in [a;b] \,.
\end{equation}
We also note that~\eqref{eq:ell-vs-imaginary} still holds for $r_1=r$, due to Lemma~\ref{Sasha's proposition}.

The standard bracketings of the words from the right-hand side of~\eqref{eq:general-one} are:
\begin{equation}\label{r+1 imaginary}
  \sb[\SL(\alpha_{\overline{c+1}\to \overline{c-1}})\underbrace{\ell_{c+sgn(i-c)}(\delta)}_{r \, \rtim}c]\doteq
  \begin{cases}
     (E_{cc}-E_{c+1,c+1})t^{r+1} & \mathrm{if}\ 1<c\leq n\\
     (E_{n+1,n+1}-E_{11})t^{r+1} & \mathrm{if}\ c=0
  \end{cases} \,.
\end{equation}
We shall now compute the standard bracketing of $\ell$. We have two possibilities
(due to the inequalities of Remark~\ref{rem:order of 3 parts}(c)):
\begin{itemize}[leftmargin=0.7cm]

\item[1)]
The costandard factorization~\eqref{eqn:cost.factor} of $\ell$ is of the form:
\begin{equation*}
  \ell=\ell'_1\ell'_2 \quad \mathrm{with} \quad
  \ell'_1=\SL((r-r_1)\delta+\alpha_{\overline{b+1} \to \overline{a-1}}) \,,\,
  \ell'_2=\SL(r_1\delta+\alpha_{a \to b}) \,.
\end{equation*}
Hence, the standard bracketing of $\ell$ is:
\begin{multline*}
  \sb[\ell]=[\sb[\ell'_1],\sb[\ell'_2]]\doteq (E_{aa}-E_{b+1,b+1})t^{r+1}\doteq \\
  (E_{aa}-E_{a+1,a+1})t^{r+1}+(E_{a+1,a+1}-E_{a+2,a+2})t^{r+1}+\dots+(E_{bb}-E_{b+1,b+1})t^{r+1}.
\end{multline*}
Thus, if $\ell$ is not a word from the right-hand side of~\eqref{eq:general-one} for $k=r+1$,
then $\sb[\ell]$ is a linear combination of the standard bracketings of the larger words
$\{\ell_d(\delta) \,|\, d\in [a;b]\}$, cf.~(\ref{eq:ell-vs-imaginary},~\ref{r+1 imaginary}).
Hence, the word $\ell$ can not be standard.

\item[2)]
The costandard factorization~\eqref{eqn:cost.factor} of $\ell$ is of the form:
\begin{equation*}
  \ell=\ell'_1\ell'_2 \quad \mathrm{with} \quad
  \ell'_1=\ell_1 \underbrace{\ell_{\overline{b+1}\to \overline{a-1}}(\delta)}_{(r-r_1-1) \, \rtim} \,,\,
  \ell'_2=\ell_2\, \SL(r_1\delta+\alpha_{a \to b}) \,.
\end{equation*}
Hence, the standard bracketing of $\ell$ is either $\sb[\ell]\doteq (E_{cc}-E_{b+1,b+1})t^{r+1}$ for
$c\in [a;b]$ or $\sb[\ell]\doteq (E_{aa}-E_{cc})t^{r+1}$ for $c\in [a+1;b]$. Thus, analogously to 1),
if $\ell$ is not a word from the right-hand side of~\eqref{eq:general-one} for $k=r+1$, then
$\sb[\ell]$ is a linear combination of the standard bracketings of the larger words
$\{\ell_d(\delta) \,|\, d\in [a;b]\}$, cf.~(\ref{eq:ell-vs-imaginary},~\ref{r+1 imaginary}).
Therefore, the word $\ell$ can not be standard.

\end{itemize}
Finally, if $\SL((r-r_1)\delta+\alpha_{\overline{b+1} \to \overline{a-1}})>\SL(r_1\delta+\alpha_{a \to b})$,
then the concatenation $\wt{\ell}$ arising from the decomposition
$(r+1)\delta=(r_1\delta+\alpha_{a \to b})+((r-r_1)\delta+\alpha_{\overline{b+1} \to \overline{a-1}})$ is
\begin{equation}\label{eq:other ordering Ok}
  \wt{\ell}=\SL(r_1\delta+\alpha_{a \to b})\SL((r-r_1)\delta+\alpha_{\overline{b+1} \to \overline{a-1}})<\ell \,,
\end{equation}
due to Lemma~\ref{lemma:lyndon}. By induction hypothesis, we have  $\sb[\wt{\ell}]\doteq (E_{pp}-E_{qq})t^{r+1}$ for some
$p,q\in [a;\overline{b+1}]$. The latter is a linear combination of standard bracketings of the larger words
$\{\ell_d(\delta) \,|\, d\in [a;b]\}$, cf.~\eqref{eq:ell-vs-imaginary}--\eqref{eq:other ordering Ok},
hence $\wt{\ell}$ is not standard either.

\medskip
\noindent
$\bullet$ Proof of~\eqref{eq:general-three} for $k=r+1$.

Consider $\alpha=(r+1)\delta+\alpha_{a\to b}$ with $1 \prec a \preceq b \prec i$. Its possible decompositions are
$\alpha=(r_1\delta+\alpha_{a\to c})+(r_2\delta+\alpha_{\overline{c+1}\to b})$ with $r_1+r_2=$ $r$ or $r+1$,
depending on $c$.

First, we show that decompositions with $c\notin [a;b]$ give rise to concatenated words which are lexicographically
smaller than the word in the right-hand side of~\eqref{eq:general-three} for $k=r+1$. There are four cases to consider:
$1\in [a;c]$ or $1\in [\overline{c+1};b]$, treating separately $r_1=0, r_1\geq 1$ in the first case and
$r_2=0, r_2\geq 1$ in the second case.

1) If $1\in [a;c]\ne \wI$ and $r_1=0$, then $1\in [a;c]\subset [e+1;e-1]$ for any $e\in [c+1;a-1]$,
and so $\SL(\alpha_{a\to c})\leq \SL(\alpha_{(e+1)\to (e-1)})$ by Lemma~\ref{Sasha's proposition}.
As $1=\min\, \wI$, we get:
  $\SL(\alpha_{a\to c})\, 1 < \SL(\alpha_{(e+1)\to (e-1)})\, e=\ell_e(\delta)<\ell_a(\delta)<\ell_{b+1}(\delta)$
with the last two inequalities due to Lemma~\ref{deltaorder}. We note that $\SL(\alpha_{a\to c})\, 1$ cannot be
a proper prefix of $\ell_{b+1}(\delta)$ (as the former word contains the letter $1$ twice) and
$\SL(r\delta+\alpha_{\overline{c+1}\to b})$ starts with $1$. Thus, the concatenation
$\SL(\alpha_{a\to c})\SL(r\delta+\alpha_{\overline{c+1}\to b})$ is lexicographically smaller than
$\ell_{b+1}(\delta)$, hence, smaller than the right-hand side of~\eqref{eq:general-three} for $k=r+1$.

2) If $1\in [\overline{c+1};b]$ and $r_2=0$, then $1\in [\overline{c+1};b]\subset [\overline{b+2};b]$, and so
$\SL(\alpha_{\overline{c+1}\to b})\leq \SL(\alpha_{\overline{b+2}\to b})$ by Lemma~\ref{Sasha's proposition}.
Thus, $\SL(\alpha_{\overline{c+1}\to b})\, 1 < \SL(\alpha_{\overline{b+2}\to b})(b+1)=\ell_{b+1}(\delta)$.
The rest of the argument proceeds exactly as in 1) above.

3) If $1\in [a;c]\ne \wI$ and $r_1\geq 1$, then
  $\SL(r_1\delta+\alpha_{a\to c})=\ell_{1}\underbrace{\ell_{a\to c}(\delta)}_{(r_1-1) \, \rtim}\ell_{2}$
with $\ell_1$ and $\ell_2$ defined through the costandard factorization $\SL(\delta+\alpha_{a\to c})=\ell_{1}\ell_{2}$.
We claim that $\ell_1<\ell_{b+1}(\delta)$, from which the argument proceeds exactly as in 1) above.
Indeed, according to Lemma~\ref{lemma root with 1}, $\ell_1$ is given by one of the following two formulas:
\begin{itemize}[leftmargin=1cm]

\item[(A)]
$\ell_1=\SL(\alpha_{a\to d})$ for $d\in [c\to (a-1))$;

\item[(B)]
$\ell_1=\SL(\alpha_{d\to c})$ for $d\in [(c+2)\to a)$.

\end{itemize}
According to Lemmas~\ref{Sasha's proposition},~\ref{deltaorder}, we thus get:
$\ell_1\leq \SL(\alpha_{a\to (a-2)})<\ell_{a-1}(\delta)<\ell_{b+1}(\delta)$ in case (A)
and $\ell_1\leq \SL(\alpha_{(c+2)\to c})<\ell_{c+1}(\delta)<\ell_{b+1}(\delta)$ in case (B), as stated~above.

4) If $1\in [\overline{c+1};b]\ne \wI$ and $r_2\geq 1$, then
  $\SL(r_2\delta+\alpha_{\overline{c+1}\to b})=
   \ell_{1}\underbrace{\ell_{\overline{c+1}\to b}(\delta)}_{(r_2-1) \, \rtim}\ell_{2}$
with $\ell_1$ and $\ell_2$ defined through the costandard factorization
$\SL(\delta+\alpha_{\overline{c+1}\to b})=\ell_{1}\ell_{2}$. We claim that $\ell_{1}<\ell_{b+1}(\delta)$,
from which the argument proceeds exactly as in 1) above. Indeed, according to Lemma~\ref{lemma root with 1},
$\ell_1$ is given by one of the following two formulas:
\begin{itemize}[leftmargin=1cm]

\item[(A)]
$\ell_1=\SL(\alpha_{d\to b})$ for $d\in [\overline{b+2};\overline{c+1}]$;

\item[(B)]
$\ell_1=\SL(\alpha_{\overline{c+1}\to d})$ for $d\in [(b+1)\to c)$.

\end{itemize}
According to Lemmas~\ref{Sasha's proposition},~\ref{deltaorder}, we thus get:
$\ell_1\leq \SL(\alpha_{\overline{b+2}\to b})<\ell_{b+1}(\delta)$ in case (A) and
  $\ell_1<\ell_2 = \SL(\alpha_{\overline{d+1}\to b})\leq \SL(\alpha_{\overline{b+2}\to b})<\ell_{b+1}(\delta)$
in case (B), as claimed~above.

Therefore, it suffices to consider only the following decompositions in~\eqref{eq:generalized Leclerc}:
\begin{equation}\label{abr}
  \alpha=(r_1\delta+\alpha_{a\to c})+((r+1-r_1)\delta+\alpha_{(c+1)\to b}) \,,
  \quad a \preceq c \prec b \,,\, 0\leq r_1\leq r+1 ,
\end{equation}
\begin{equation}\label{abi}
  \alpha=(r_1\delta)+((r+1-r_1)\delta+\alpha_{a\to b}) \,,
  \quad 1\leq r_1\leq r+1.
\end{equation}

\noindent
$\circ$
Case 1: Concatenations arising through~\eqref{abr}.

1) If $0<r_1<r+1$, then the corresponding concatenated word starts with $\ell_{c+1}(\delta)$, due to
the induction hypothesis and the inequality $\ell_{c+1}(\delta)<\ell_{b+1}(\delta)$ of Lemma~\ref{deltaorder}.
Thus, this concatenation is $<$ the right-hand side of~\eqref{eq:general-three} for $k=r+1$.

2) If $r_1=r+1$, then the corresponding concatenated word again starts with $\ell_{c+1}(\delta)$,
but now because the first letter of $\ell_{c+1}(\delta)$ is smaller than any of $c+1,\ldots,b$.
Therefore, this concatenation is $<$ the right-hand side of~\eqref{eq:general-three} for $k=r+1$.

3) If $r_1=0$, then the concatenation equals
$\underbrace{\ell_{b+1}(\delta)}_{(r+1) \, \rtim}b(b-1)\dots (c+1)\, \SL(\alpha_{a\to c})$.
But $\SL(\alpha_{a\to c})\leq c (c-1) \dots a$ (either they differ in the first letters, or
Claim~\ref{claim:LR-A-endpoint} applies), hence, this concatenation is $\leq$ the right-hand side
of~\eqref{eq:general-three} for $k=r+1$.

\noindent
$\circ$
Case 2: concatenations arising through~\eqref{abi}.

\noindent
First, we record the standard bracketing $\sb[\SL((r+1-r_1)\delta+\alpha_{a\to b})]\doteq E_{a,b+1}t^{r+1-r_1}$.

1) If $r_1>1$, then according to \eqref{r+1 imaginary} the only words from the right-hand side
of~\eqref{eq:general-one} with $k=r_1$ whose standard bracketing does not commute with the above
$\sb[\SL((r+1-r_1)\delta+\alpha_{a\to b})]$ start with $\SL(\alpha_{\overline{c+1}\to \overline{c-1}})1$
for $c=a-1,a,b,b+1$. Each of these words is lexicographically smaller than $\ell_{b+1}(\delta)$. Hence,
the corresponding concatenation is $<$ the right-hand side of~\eqref{eq:general-three} for $k=r+1$.

2) If $r_1=1$, then we should rather use formula~\eqref{eq:el-word-bracketing} for the bracketings.

\noindent
$\circ$
If $b \prec (i-1)$, then the only $\ell_?(\delta)$ whose standard bracketing does not commute with
$\sb[\SL(r\delta+\alpha_{a\to b})]$ are $\ell_a(\delta)$ and $\ell_{b+1}(\delta)$. As
$\ell_a(\delta)<\ell_{b+1}(\delta)$ by Lemma~\ref{deltaorder}, the resulting concatenation
is $\leq$ the right-hand side of~\eqref{eq:general-three} for $k=r+1$.

\noindent
$\circ$
If $b=i-1$, then the only $\ell_?(\delta)$ whose standard bracketing does not commute with
$\sb[\SL(r\delta+\alpha_{a\to b})]$ are $\ell_a(\delta)$ and $\{\ell_{c}(\delta)|c\geq i\}$.
As $\ell_i(\delta)$ is the maximal of these words (Lemma~\ref{deltaorder}), the concatenation
is still $\leq$ the right-hand side of~\eqref{eq:general-three} for~$k=r+1$.

We note that in both cases above the equality is possible (when $\ell_{b+1}(\delta)$ is used).

\noindent
This completes our proof of~\eqref{eq:general-three} for $k=r+1$.

\medskip
\noindent
$\bullet$ Proof of~\eqref{eq:general-two} for $k=r+1$.

The argument is completely analogous to the one used in the previous case
(we leave details to the interested reader).

\medskip
\noindent
$\bullet$ Proof of~\eqref{eq:general-four}--\eqref{genreal four 4} for $k=r+1$.

Let us prove the most complicated formula~\eqref{eq:general-four} for the case $\alpha=(r+1)\delta+\alpha_{a\to b}$
with $1 \prec a \prec i \prec b$ and $\overline{i-1}<\overline{i+1}$ (the proofs for the other cases are analogous).

There exists a degree $\alpha$ Lyndon word with a nonzero bracketing that starts with $\SL_1(\delta)=\ell_i(\delta)$.
Therefore, it suffices to consider in~\eqref{eq:generalized Leclerc} only those decompositions
$\alpha=(r_1\delta+\beta_1)+(r_2\delta+\beta_2)$ such that each word $\SL(r_1\delta+\beta_1)$, $\SL(r_2\delta+\beta_2)$
is either $>\ell_i(\delta)$ or is a prefix of $\ell_i(\delta)$. This excludes the following cases (with $p=1,2$):

1) $\beta_{p}=\alpha_{a\to c}$ with $1\in [a;c]\ne \wI$, as in this case we have
$\SL(\alpha_{a\to c})\, 1  < \ell_i(\delta)$ and $\ell_1\, 1 < \ell_i(\delta)$ with $\ell_1$ arising through the
costandard factorization $\SL(\delta+\alpha_{a\to c})=\ell_1\ell_2$, cf.~our verification of~\eqref{eq:general-three} above;

2) $\beta_{p}=\alpha_{c\to b}$ with $1\in [c;b]\ne \wI$, as in this case we have
$\SL(\alpha_{c\to b})\, 1  < \ell_i(\delta)$ and $\ell_1\, 1 < \ell_i(\delta)$ with $\ell_1$ arising through the
costandard factorization $\SL(\delta+\alpha_{c\to b})=\ell_1\ell_2$, cf.~our verification of~\eqref{eq:general-three} above;

3) $\beta_{p}=k\delta$ with $k>1$, as
  $\SL(\alpha_{\overline{c+1}\to \overline{c-1}})\,1 < \SL(\alpha_{\overline{c+1}\to \overline{c-1}})\,c =
   \ell_c(\delta)\leq \ell_i(\delta)\ \forall\,c$;

4) $\beta_p=\alpha_{a\to c}$ with $c\in [a\to (i-1))$ and $r_p>0$, as $\SL(r_p\delta+\beta_p)$ then starts
with $\ell_{c+1}(\delta)$ which has the same length but is lexicographically smaller than $\ell_i(\delta)$;

5) $\beta_p=\alpha_{\overline{c+1}\to b}$ with $c\in [\overline{i+1}\to b)$ and $r_p>0$, as $\SL(r_p\delta+\beta_p)$
then starts with $\ell_{c}(\delta)$ which has the same length but is lexicographically smaller than $\ell_i(\delta)$.

\noindent
Furthermore, if $\beta_p=\alpha_{a\to c}$ with $c\in [a\to (i-1))$ and $r_p=0$, then the corresponding
concatenation $\SL((r+1)\delta+\alpha_{(c+1)\to b})\SL(\alpha_{a\to c})$ is $\leq$ the right-hand side
of~\eqref{eq:general-four} for $k=r+1$, due to the inequality
$\overline{i-1} \ldots (c+1)\SL(\alpha_{a\to c})\leq \overline{i-1} \ldots (c+1)\,c \ldots a$
(implied by Claim~\ref{claim:LR-A-endpoint}) and the induction hypothesis. Likewise, if
$\beta_p=\alpha_{\overline{c+1}\to b}$ with $c\in [\overline{i+1}\to b)$ and $r_p=0$, then the corresponding
concatenation $\SL((r+1)\delta+\alpha_{a\to c})\SL(\alpha_{\overline{c+1}\to b})$ is $\leq$
the right-hand side of~\eqref{eq:general-four} for $k=r+1$, due to the similar inequality
$\overline{i+1} \dots c\, \SL(\alpha_{\overline{c+1}\to b})\leq \overline{i+1} \dots b$ and the induction hypothesis.

Therefore, it suffices to consider only the following decompositions in~\eqref{eq:generalized Leclerc}:
\begin{equation}\label{eq:aib-decomp}
\begin{split}
  & \alpha=(r_1\delta + \alpha_{a\to \overline{i-1}})+((r+1-r_1)\delta+\alpha_{i \to b}) \,, \quad 0\leq r_1\leq r+1 \\
  & \alpha=(r_1\delta+\alpha_{a\to i})+((r+1-r_1)\delta + \alpha_{\overline{i+1}\to b}) \,,\quad 0\leq r_1\leq r+1 \\
  & \alpha=(\delta)+(r\delta + \alpha_{a\to b}).
\end{split}
\end{equation}
Clearly, we can choose only $\SL_1(\delta)=\ell_i(\delta)$ in the latter case.
By the induction hypothesis, all the corresponding concatenations have the following specific form:
\begin{multline}\label{eq:triple-ell-factor}
  \ell=\underbrace{\ell_i(\delta)}_{p \, \rtim} \ell_1 \underbrace{\ell_i(\delta)}_{q \, \rtim} \ell_2
        \underbrace{\ell_i(\delta)}_{m \, \rtim}\ell_3
  \quad \mathrm{with} \\
  p+q+m=r+1 \quad \mathrm{and} \quad
  \{\ell_1 \,,\, \ell_2 \,,\, \ell_3\}=\{\overline{i-1} \dots a \,,\, i \,,\, \overline{i+1}\dots b\}.
\end{multline}
Since the corresponding concatenation $\ell$ is Lyndon (Lemma~\ref{lemma:lyndon}) and $\ell_i(\delta)$ starts
with $1$ which is smaller than the first letter of the words $\ell_1,\ell_2,\ell_3$, we must have
\begin{equation}\label{eq:pqm}
  p\geq q \quad \mathrm{and} \quad p\geq m.
\end{equation}
Let us consider three cases:

$\circ$ Case 1: $3 \,|\, (r+1)$.
According to~\eqref{eq:pqm}, we have $p\geq \frac{r+1}{3}$. To get the lexicographically
largest word, we need to pick $p$ the smallest possible: $p=\frac{r+1}{3}$. As $p\geq q,m$ and $p+q+m=r+1$, we have
$p=q=m=\frac{r+1}{3}$. Additionally, $\ell$ being Lyndon implies $\ell_1<\ell_2$ and $\ell_1<\ell_3$ if $p=q=m$.
It thus follows that $\ell_1=i$. As we assumed $\overline{i+1}>\overline{i-1}$, the largest word occurs if
$\ell_2=\overline{i+1}\dots b > \ell_3=\overline{i-1} \dots a$. Thus, we end up exactly with the word in the
right-hand side of~\eqref{eq:general-four} for $k=r+1$:
\begin{equation*}
  \ell_{\max} \, =
  \underbrace{\ell_i(\delta)}_{\frac{r+1}{3} \, \rtim} i \underbrace{\ell_i(\delta)}_{\frac{r+1}{3} \, \rtim}
  \overline{i+1}\dots b \underbrace{\ell_i(\delta)}_{\frac{r+1}{3} \, \rtim}\overline{i-1}\dots a.
\end{equation*}
This word arises from the decomposition
  $\alpha=(\frac{2(r+1)}{3}\delta+\alpha_{i\to b})+(\frac{r+1}{3}\delta+\alpha_{a\to \overline{i-1}})$.
The latter provides the costandard factorization of $\ell_{\max}$, in particular, $\sb[\ell_{\max}]\ne 0$.

$\circ$ Case 2: $3 \,|\, (r+2)$.
According to~\eqref{eq:pqm}, we have $p\geq \frac{r+2}{3}$. To get the lexicographically
largest word, we need to pick $p$ the smallest possible: $p=\frac{r+2}{3}$. Then, we have
$\{q,m\}=\{\frac{r+2}{3},\frac{r-1}{3}\}$. As $\ell$ is Lyndon and $q=p$ or $m=p$, $\ell_1\leq \ell_2$
or $\ell_1\leq \ell_3$, respectively. Thus, $\ell_1$ equals $i$ or $\overline{i-1} \dots a$, and to get
the lexicographically largest word, we need to pick $\ell_1=\overline{i-1} \dots a$ and $q=\frac{r-1}{3}$.
Then $m=\frac{r+2}{3}$, and $\ell$ being Lyndon implies that $\ell_3=\overline{i+1} \dots b$, so that $\ell_2=i$.
Thus, we end up exactly with the word in the right-hand side of~\eqref{eq:general-four} for $k=r+1$:
\begin{equation*}
  \ell_{\max} \, =
  \underbrace{\ell_i(\delta)}_{\frac{r+2}{3} \, \rtim} \overline{i-1} \dots a
  \underbrace{\ell_i(\delta)}_{\frac{r-1}{3} \, \rtim} i
  \underbrace{\ell_i(\delta)}_{\frac{r+2}{3} \, \rtim} \overline{i+1}\dots b.
\end{equation*}
This word arises from the decomposition
  $\alpha=(\frac{2r+1}{3}\delta+\alpha_{a\to i})+(\frac{r+2}{3}\delta+\alpha_{\overline{i+1}\to b})$.
The latter provides the costandard factorization of $\ell_{\max}$, in particular, $\sb[\ell_{\max}]\ne 0$.

$\circ$ Case 3: $3 \,|\, r$.
According to~\eqref{eq:pqm}, we have $p\geq \frac{r}{3}+1$. To get the lexicographically largest word,
we need to pick $p$ the smallest possible and then $\ell_1$ the maximal possible: $p=\frac{r}{3}+1$ and
$\ell_1=\overline{i+1}\dots b$. As $\ell_1$ is then larger than $\ell_2,\ell_3$ and $\ell$ is Lyndon, we
must have $q,m<p=\frac{r}{3}+1$. Evoking $p+q+m=r+1$, we thus get $q=m=\frac{r}{3}$. It is then
straightforward to see (using the induction hypothesis) that the only possible concatenation corresponds
to $\ell_2=i, \ell_3=\overline{i-1} \dots a$. Thus, we end up exactly with the word in the right-hand side
of~\eqref{eq:general-four} for $k=r+1$:
\begin{equation*}
  \ell_{\max} \, =
  \underbrace{\ell_i(\delta)}_{\frac{r+3}{3} \, \rtim} \overline{i+1} \dots b
  \underbrace{\ell_i(\delta)}_{\frac{r}{3} \, \rtim} i
  \underbrace{\ell_i(\delta)}_{\frac{r}{3} \, \rtim} \overline{i-1}\dots a.
\end{equation*}
This word arises from the decomposition
  $\alpha=(\frac{2r}{3}\delta+\alpha_{a\to i})+(\frac{r+3}{3}\delta+\alpha_{\overline{i+1}\to b})$.
The latter provides the costandard factorization of $\ell_{\max}$, in particular, $\sb[\ell_{\max}]\ne 0$.

\medskip
\noindent
$\bullet$ Proof of~\eqref{eq:general-five} for $k=r+1$.

The last root to consider is $\alpha_{b\to a}+(r+1)\delta$, where $1\in [b;a]\ne \wI$. First, let us
prove the aforementioned fact about the order of $\ell_1$, $\ell_{b \to a}(\delta)$, and $\ell_2$
(see~Remark~\ref{rem:order of 3 parts}(c)):
\begin{equation}\label{eq:auxiliary}
  \ell_{1}<\ell_{2}\leq \ell_{b \to a}(\delta).
\end{equation}
To prove this we need to look at the word $\SL(2\delta+\alpha_{b\to a})$. The first inequality is clear.
According to Claim \ref{bracketing with 1}, $\ell_2$ is either $\ell_*(\delta)$ or one of the words
$\SL(\alpha_{d \to a}), \SL(\alpha_{b \to c})$ with $d\in [\overline{a+2};\overline{b-1}], c\in [a;\overline{b-2}]$,
respectively. Let us consider these three cases:

\noindent
$\circ$
If $\ell_2=\ell_*(\delta)$, then one gets $\ell_{b \to a}(\delta)=\ell_2$ exactly as in our proof of~\eqref{eq:aux-gen-one}.

\noindent
$\circ$
If $\ell_2=\SL(\alpha_{d \to a})$, then in fact
  $\ell_1=\SL(\alpha_{b \to \overline{b-2}})<\ell_2=\SL(\alpha_{\overline{b-1}\to a})$,
due to Lemma~\ref{lemma root with 1}. Also
  $\SL(\alpha_{\overline{b-1}\to a})<
   \SL(\alpha_{\overline{b-1} \to \overline{b-3}})\, \overline{b-2}=\ell_{\overline{b-2}}(\delta)$
by Lemma~\ref{Sasha's proposition}.

1) If $i\in[2;\overline{b-2}]$, then $\sb[\ell_{\overline{b-2}}(\delta)]\doteq (E_{i,i} - E_{b-1,b-1})t$ by~\eqref{eq:el-word-bracketing},
which does not commute with $\sb[\ell_1]=\sb[\SL(\alpha_{b \to \overline{b-2}})]\doteq E_{b,b-1}t^{1-\delta_{b,1}}$.
Thus, the word $\ell_1\ell_{\overline{b-2}}(\delta)\ell_2$ is Lyndon and its bracketing is
  $\sb[\ell_1\ell_{\overline{b-2}}(\delta)\ell_2]=[\sb[\ell_1\ell_{\overline{b-2}}(\delta)],\sb[\ell_2]]=
   [[\sb[\ell_1],\sb[\ell_{\overline{b-2}}(\delta)]],\sb[\ell_2]] \doteq [\sb[\ell_1], \sb[\ell_2]]t\neq 0$.
Therefore, $\ell_2<\ell_{\overline{b-2}}(\delta)\leq \ell_{b \to a}(\delta)$.

2) If $i\in[\overline{b-1};n]$, then $\ell_2<\ell_{\overline{b-2}}(\delta)<\ell_{\overline{b-1}}(\delta)$ by Lemma \ref{deltaorder}.
Also $\sb[\ell_{\overline{b-1}}(\delta)]\doteq (E_{i+1,i+1} - E_{b-1,b-1})t$
by~\eqref{eq:el-word-bracketing}, which again does not commute with $\sb[\ell_1]\doteq E_{b,b-1}t^{1-\delta_{b,1}}$. Thus, the word
$\ell_1\ell_{\overline{b-1}}(\delta)\ell_2$ is Lyndon and moreover, arguing as in 1), we also get
$\sb[\ell_1\ell_{\overline{b-1}}(\delta)\ell_2]\ne 0$.
Therefore, $\ell_2<\ell_{\overline{b-1}}(\delta)\leq \ell_{b \to a}(\delta)$.

\noindent
$\circ$
If $\ell_2=\SL(\alpha_{b \to c})$, then in fact
  $\ell_1=\SL(\alpha_{\overline{a+2} \to a})<\ell_2=\SL(\alpha_{b\to \overline{a+1}})$,
due to Lemma~\ref{lemma root with 1}. Also
  $\SL(\alpha_{b\to \overline{a+1}})<
   \SL(\alpha_{\overline{a+3} \to \overline{a+1}})\, \overline{a+2}=\ell_{\overline{a+2}}(\delta)$
by Lemma~\ref{Sasha's proposition}.

1) If $i\in[\overline{a+2};n]$, then
  $\sb[\ell_{\overline{a+2}}(\delta)]\doteq (E_{i+1,i+1} - E_{a+2,a+2})t$ by~\eqref{eq:el-word-bracketing},
which does not commute with $\sb[\ell_1]=\sb[\SL(\alpha_{\overline{a+2} \to a})]\doteq E_{a+2,a+1}t$.
Thus, the word $\ell_1\ell_{\overline{a+2}}(\delta)\ell_2$ is Lyndon and its bracketing is
  $\sb[\ell_1\ell_{\overline{a+2}}(\delta)\ell_2]=[\sb[\ell_1\ell_{\overline{a+2}}(\delta)],\sb[\ell_2]]=
   [[\sb[\ell_1],\sb[\ell_{\overline{a+2}}(\delta)]],\sb[\ell_2]] \doteq [\sb[\ell_1], \sb[\ell_2]]t\neq 0$.
Therefore, $\ell_2<\ell_{\overline{a+2}}(\delta)\leq \ell_{b \to a}(\delta)$.

2) If $i\in[2;\overline{a+1}]$, then $\ell_{\overline{a+2}}(\delta)<\ell_{\overline{a+1}}(\delta)$ by Lemma \ref{deltaorder}
so that $\ell_2<\ell_{\overline{a+1}}(\delta)$. Note that $\sb[\ell_{\overline{a+1}}(\delta)]\doteq (E_{i,i} - E_{a+2,a+2})t$
by~\eqref{eq:el-word-bracketing}, which again does not commute with $\sb[\ell_1]\doteq E_{a+2,a+1}t$. Thus, the word $\ell_1\ell_{\overline{a+1}}(\delta)\ell_2$
is Lyndon and moreover, arguing as in 1), we also get $\sb[\ell_1\ell_{\overline{a+1}}(\delta)\ell_2]\ne 0$.
Therefore, $\ell_2<\ell_{\overline{a+1}}(\delta)\leq \ell_{b \to a}(\delta)$.

This completes our proof of~\eqref{eq:auxiliary}.

\medskip

We also note the following inequality:
\begin{equation}\label{eq:ba-k0}
  \SL(\alpha_{b\to a})\leq \ell_1 < \SL(\delta+\alpha_{b\to a})=\ell_1\ell_2.
\end{equation}
According to Lemma~\ref{lemma root with 1}, $\ell_1$ is either $\SL(\alpha_{b \to \overline{b-2}})$ or
$\SL(\alpha_{\overline{a+2} \to a})$. Evoking Lemma~\ref{Sasha's proposition}, we thus get
$\SL(\alpha_{b\to a})\leq \ell_1 < \ell_1\ell_2$ in both cases, as claimed in~\eqref{eq:ba-k0}.

\medskip

To prove our \underline{key} Lemma~\ref{lemma root with 1} below, we need an explicit algorithm for computing
the words $\SL(\alpha_{b \to a})$. This is essentially a description of Lalonde-Ram's bijection
\eqref{eqn:associated word} for a finite type $A$, generalizing our former Claim~\ref{claim:LR-A-endpoint}
to the case when the minimal letter on the arch $[b;a]$ is not $b$ or $a$, and it utilizes the
argument from our proof of~(\ref{eq:el-word-explicit-1},~\ref{eq:el-word-explicit-2}).
We provide two algorithms: building $\SL(\alpha_{b\to a})$ either from right to left or from left to right
by stacking ``segmental'' words accordingly.

\medskip
\noindent
\underline{Right-to-Left Algorithm for $\SL(\alpha_{b\to a})$ with $1 \in [b;a]$.}

This algorithm (which crucially uses the fact that each letter appears at most once) reads off the word
$\SL(\alpha_{b\to a})$ from right to left, stacking ``segmental'' words accordingly. First, we note that
$1$ will be the first letter. Then, we choose the second smallest letter $1\ne c\in [b;a]$.
If $c \in [2;a]$, then we place the word $u_1:=c\,\overline{c+1} \dots a$ in the very end of $\SL(\alpha_{b\to a})$,
while for $c \in [b; 0]$ we place the word $u_1:=c\, \overline{c-1}\dots b$ in the very end of $\SL(\alpha_{b\to a})$.
Next, we apply the same algorithm to the arch $[b;c-1]$ or $[\overline{c+1};a]$, respectively. In other words, we take
the second smallest letter among the remaining ones, and place the resulting word $u_2$ right before $u_1$, and so on.

\medskip
\noindent
\underline{Left-to-Right Algorithm for $\SL(\alpha_{b\to a})$ with $1 \in [b;a]$.}

Since the lexicographical order compares words from left to right, it is convenient to restate the above algorithm by
rather building $\SL(\alpha_{b\to a})$ from left to right. The first letter is clearly $1$, while the second letter
is the $\max\{0,2\}$. If it is $0$, then either $n \notin [b; a]$ in which case we just place the segment $23\dots a$
after $0$, or $n\in [b; a]$ and we compare $n$ and $2$, do the same operation, and proceed further. Let us rephrase
the above algorithm. Pick the largest letter among $2$ and $0$ and add after $1$ the longest Lyndon segment
$23\dots c$ with $c\in [2; a]$ (if $2>0$) or $0n\dots d$ with $d\in [b; 0]$ (if $2<0$). Then,
compare $\overline{c+1}$ with $0$ or $\overline{d-1}$ with $2$ accordingly, and so on.
This reconstructs $\SL(\alpha_{b\to a})$ by stacking ``segmental'' words from left to right after $1$.

\medskip

Let us now describe the costandard factorization of $\SL(\delta+\alpha_{b\to a})$ with $1\in [b;a]$.

\begin{lemma}\label{lemma root with 1}
Let $\SL(\delta+\alpha_{b \to a})=\ell_1\ell_2$ be the costandard factorization, $1\in [b;a]$.

\noindent
(a) If $\SL(\alpha_{\overline{b-1}\to a})>\SL(\alpha_{b\to \overline{a+1}})$, then:
$\ell_1=\SL(\alpha_{b\to \overline{b-2}})$, $\ell_2=\SL(\alpha_{\overline{b-1}\to a})$.

\noindent
(b) If $\SL(\alpha_{\overline{b-1}\to a})<\SL(\alpha_{b\to \overline{a+1}})$, then:
$\ell_1=\SL(\alpha_{ \overline{a+2}\to a})$, $\ell_2=\SL(\alpha_{b\to \overline{a+1}})$.
\end{lemma}

\begin{remark}\label{rem:explanation}
For $a=\overline{b-2}$ (equivalently, $b=\overline{a+2}$), we get $\ell_1=\SL(\alpha_{b\to \overline{b-2}})$
while the above formulas for $\ell_2$ should be understood as follows:
  $$\ell_2=\ell_{b\to \overline{b-2}}(\delta)=\ell_{b-1+sgn(i-(b-1))}(\delta).$$
\end{remark}

\begin{proof}[Proof of Lemma~\ref{lemma root with 1}]
For $a=\overline{b-2}$, the above formulas (cf.~Remark~\ref{rem:explanation}) are obvious,
since according to Claim~\ref{bracketing with 1} there is only one decomposition to consider:
\begin{equation*}
  \alpha_{b \to \overline{b-2}}+\delta=(\alpha_{b \to \overline{b-2}})+(\delta) \,,
\end{equation*}
and
  $\SL(\alpha_{b \to \overline{b-2}})<\ell_{\overline{b-1}}(\delta)\leq \ell_{b-1+sgn(i-(b-1))}(\delta)$,
cf.\ Lemma~\ref{deltaorder}, Remark~\ref{rem:order of 3 parts}(b).

If $a\ne \overline{b-2}$ and $\SL(\alpha_{\overline{b-1}\to a})>\SL(\alpha_{b\to \overline{a+1}})$,
then we claim that:
\begin{equation}\label{eq:comb-alg}
  \SL(\alpha_{\overline{b-1}\to a})>\SL(\alpha_{b\to \overline{b-2}}) \,.
\end{equation}
Indeed, let us construct all three $\SL$-words
  $\SL(\alpha_{\overline{b-1}\to a})$, $\SL(\alpha_{b\to \overline{a+1}})$, $\SL(\alpha_{b\to \overline{b-2}})$
using the above ``Left-to-Right Algorithm''. Then, $\SL(\alpha_{\overline{b-1}\to a})>\SL(\alpha_{b\to \overline{a+1}})$
implies that at the leftmost spot where these words differ either the former has $\overline{b-1}$ while the latter has
some $c<\overline{b-1}$ or the latter has $\overline{a+1}$ while the former has some $c>\overline{a+1}$.
In either of these cases, we clearly have $\SL(\alpha_{\overline{b-1}\to a})>\SL(\alpha_{b\to \overline{b-2}})$.

According to~\eqref{eq:comb-alg} and Lemma~\ref{lemma:lyndon}, the word
$\SL(\alpha_{b\to \overline{b-2}})\SL(\alpha_{\overline{b-1}\to a})$ is Lyndon. Its costandard
factorization~\eqref{eqn:cost.factor} is precisely given by $\ell_1=\SL(\alpha_{b\to \overline{b-2}})$
and $\ell_2=\SL(\alpha_{\overline{b-1}\to a})$, since both words start with $1$ (and have no more $1$'s).
Hence, the standard bracketing
  $\sb[\SL(\alpha_{b\to \overline{b-2}})\SL(\alpha_{\overline{b-1}\to a})]=
   [\sb[\SL(\alpha_{b\to \overline{b-2}})],\sb[\SL(\alpha_{\overline{b-1}\to a})]]\ne 0$.
We thus conclude that
  $\SL(\delta+\alpha_{b \to a})\geq \SL(\alpha_{b\to \overline{b-2}})\SL(\alpha_{\overline{b-1}\to a})$.
We also note that combining~\eqref{eq:comb-alg} with Lemma~\ref{Sasha's proposition}, we obtain:
\begin{equation}\label{eq:key-com-inequalities}
  \SL(\alpha_{b\to c}) \leq \SL(\alpha_{b\to \overline{b-2}}) < \SL(\alpha_{\overline{b-1}\to a})
  \qquad \forall\, c \in [a;\overline{b-2}] \,.
\end{equation}
Combining Claim~\ref{bracketing with 1} with \eqref{eq:key-com-inequalities}, we get
  $\SL(\delta+\alpha_{b \to a})\leq \SL(\alpha_{b\to \overline{b-2}})\SL(\alpha_{\overline{b-1}\to a})$.
Therefore, we actually have the equality
  $$\SL(\delta+\alpha_{b \to a})=\SL(\alpha_{b\to \overline{b-2}})\SL(\alpha_{\overline{b-1}\to a})$$
and the two words in the right-hand side determine the costandard factorization of $\SL(\delta+\alpha_{b \to a})$,
as shown above. This completes our proof of part (a).

The proof of part~(b) is analogous and is left to the interested reader.
\end{proof}

\begin{corollary}\label{cor:ell1-invariant}
In the setup of Lemma~\ref{lemma root with 1}, we have:
\begin{equation}\label{eq:ell1-inv}
  \ell_1=\min \big\{ \SL(\alpha_{b\to \overline{b-2}}), \SL(\alpha_{ \overline{a+2}\to a}) \big\} \,.
\end{equation}
\end{corollary}

\begin{proof}
For $a=\overline{b-2}$, the claim is vacuous by Lemma~\ref{lemma root with 1}.
If $\SL(\alpha_{\overline{b-1}\to a})>\SL(\alpha_{b\to \overline{a+1}})$, then
$\ell_1=\SL(\alpha_{b\to \overline{b-2}})<\SL(\alpha_{\overline{b-1}\to a})$ by~\eqref{eq:comb-alg} and
Lemma~\ref{lemma root with 1}. But $\SL(\alpha_{\overline{b-1}\to a})\leq \SL(\alpha_{\overline{a+2}\to a})$
by Lemma~\ref{Sasha's proposition} as $1\in [\overline{b-1};a]\subseteq [\overline{a+2};a]$ for
$a\prec \overline{b-2}$. Combining the above, we obtain:
$\ell_1=\SL(\alpha_{b\to \overline{b-2}})<\SL(\alpha_{\overline{a+2}\to a})$.

The case $\SL(\alpha_{\overline{b-1}\to a})<\SL(\alpha_{b\to \overline{a+1}})$ is completely analogous.
\end{proof}

With the inequalities~(\ref{eq:auxiliary},~\ref{eq:ba-k0}) and Lemma~\ref{lemma root with 1} at hand, we
shall finally proceed to the proof of~\eqref{eq:general-five} for $k=r+1$. To this end, we consider all possible
decompositions of $\alpha=(r+1)\delta+\alpha_{b\to a}$ with $1\in [b;a]$ case-by-case:

\medskip
\noindent
1) $\alpha=(r_1\delta + \alpha_{b\to c}) + ((r+1-r_1)\delta+\alpha_{\overline{c+1}\to a})$, with $c\in [b \to a)$.

Let us assume that $1\in [b;c]$ (the case $1\in [\overline{c+1};a]$ is analogous).
The corresponding concatenation $\ell$ is
  $\leq \ell'_1 \underbrace{\ell_{b\to c}(\delta)}_{(r_1-1) \, \rtim} \ell'_2\, \SL((r+1-r_1)\delta+\alpha_{\overline{c+1}\to a})$
if $r_1>0$, or
  $\leq \SL(\alpha_{b \to c})\SL((r+1)\delta+\alpha_{\overline{c+1}\to a})$
if $r_1=0$. Here, $\SL(\delta+\alpha_{b\to c})=\ell'_1\ell'_2$ is the costandard factorization.
According to~(\ref{eq:auxiliary},~\ref{eq:ba-k0}), we have:
$\SL(\alpha_{b \to c})\leq \ell'_1<\ell'_2\leq \ell_{b\to c}(\delta)$, where both equalities hold
iff either of them holds. As $c\in [a\to b)$ and $b\ne \overline{a-1}$, we have
$\SL(\alpha_{b \to c})\ne \ell'_1$, due to Lemma~\ref{lemma root with 1}.
Thus $\SL(\alpha_{b \to c})<\ell'_1$, so that $\SL(k_1\delta+\alpha_{b \to c})<\SL(k_2\delta+\alpha_{b \to c})$, hence
$\SL(k_1\delta+\alpha_{b \to c})1<\SL(k_2\delta+\alpha_{b \to c})$ and the former is not a prefix of the latter
for any $0\leq k_1<k_2$. Therefore,
  $\ell \leq \ell'_1 \underbrace{\ell_{b\to c}(\delta)}_{r \, \rtim} \ell'_2\, \SL(\alpha_{\overline{c+1}\to a})=
   \SL((r+1)\delta + \alpha_{b\to c}))\SL(\alpha_{\overline{c+1}\to a})$.
By Lemma~\ref{lemma root with 1}, $\ell'_2$ is either $\SL(\alpha_{\overline{b-1}\to c})$
or $\SL(\alpha_{b\to \overline{c+1}})$. We consider these cases:

$\circ$
If $\ell'_2=\SL(\alpha_{\overline{b-1}\to c})$ and $a\ne \overline{b-2}$, then
$\ell'_2\, \SL(\alpha_{\overline{c+1}\to a})\leq \SL(\alpha_{\overline{b-1}\to a})$ by Proposition~\ref{prop:generalized Leclerc}.
Moreover, by Lemma~\ref{lemma root with 1} and its proof, we also have
$\SL(\alpha_{\overline{b-1}\to c})>\SL(\alpha_{b\to \overline{c+1}})$ and
$\SL(\alpha_{\overline{b-1}\to c})>\SL(\alpha_{b\to \overline{b-2}})=\ell'_1$.
We thus obtain a sequence of inequalities:
  $\SL(\alpha_{\overline{b-1}\to a}) > \SL(\alpha_{\overline{b-1}\to c}) >
   \SL(\alpha_{b\to \overline{b-2}})\geq \SL(\alpha_{b\to \overline{a+1}})$.
Hence, applying Lemma~\ref{lemma root with 1} once again to $\SL(\delta+\alpha_{b\to a})$,
we see that its costandard factorization has prefix $\ell_1=\ell'_1$, suffix $\ell_2=\SL(\alpha_{\overline{b-1}\to a})$,
and therefore $\ell_{b\to a}(\delta)=\ell_{b\to c}(\delta)$. Thus, we derive the desired inequality:
\begin{equation*}
  \ell\leq
  \ell'_1 \underbrace{\ell_{b\to c}(\delta)}_{r \, \rtim}  \SL(\alpha_{\overline{b-1}\to c})\SL(\alpha_{\overline{c+1}\to a}) \leq
  \ell'_1 \underbrace{\ell_{b\to c}(\delta)}_{r \, \rtim}  \SL(\alpha_{\overline{b-1}\to a}) =
  \ell_1 \underbrace{\ell_{b\to a}(\delta)}_{r \, \rtim}  \ell_2 \,.
\end{equation*}
Moreover, the equality is possible for $r_1=r+1$ and a specific $c\in [b\to a)$ such that
$\SL(\alpha_{\overline{b-1} \to a}) = \SL(\alpha_{\overline{b-1}\to c})\SL(\alpha_{\overline{c+1}\to a})$ is the costandard factorization.

Let us now consider the case $\ell'_2=\SL(\alpha_{\overline{b-1}\to c})$ and $a=\overline{b-2}$.
If $\SL(\alpha_{\overline{b-1}\to c})\SL(\alpha_{\overline{c+1}\to a})\leq \ell_{b\to c}(\delta)$, then
$\ell\leq \ell_1\underbrace{\ell_{b\to a}(\delta)}_{r+1 \, \rtim}=\ell_1\underbrace{\ell_{b\to a}(\delta)}_{r \, \rtim}\ell_2$ still holds.
Otherwise, if $\SL(\alpha)=\ell$, we would have
  $\SL(\alpha)=\SL((r+1)\delta+\alpha_{b\to c})\SL(\alpha_{\overline{c+1}\to a})=\ell'_1 \underbrace{\ell_{b\to c}(\delta)}_{r \, \rtim}  \SL(\alpha_{\overline{b-1}\to c})\SL(\alpha_{\overline{c+1}\to a})$,
due to Proposition~\ref{prop:generalized Leclerc}(a). However, the costandard factorization of the above word passes to the right of $\ell'_1$, and
the costandard factorization of the resulting suffix passes to the right of the first $\ell_{b\to c}(\delta)$, a contradiction with Remark~\ref{rem:gen Lyndon rmk}.
Hence, $\ell$ cannot be standard Lyndon.

$\circ$
If $\ell'_2=\SL(\alpha_{b\to \overline{c+1}})$, then $\deg\, \ell'_2+\alpha_{\overline{c+1}\to a}\notin \wDelta^+$ and so
$\sb[\ell'_2\, \SL(\alpha_{\overline{c+1}\to a})]=0$. Likewise, by the degree reasons and evoking inequalities~\eqref{eq:auxiliary}, we find:
\begin{equation*}
  \sb[\SL((r+1)\delta + \alpha_{b\to c}))\SL(\alpha_{\overline{c+1}\to a})]=
  \sb[ \ell'_1 \underbrace{\ell_{b\to c}(\delta)}_{r \, \rtim} \ell'_2\, \SL(\alpha_{\overline{c+1}\to a})]=0
\end{equation*}
as the costandard factorization of this concatenation passes on the left of $\ell'_2$ or some $\ell_{b\to c}(\delta)$.
Since $\ell\leq \SL((r+1)\delta + \alpha_{b\to c}))\SL(\alpha_{\overline{c+1}\to a})$, we see that if
$\SL(\alpha)=\ell$, then we would have $\SL(\alpha)=\SL((r+1)\delta + \alpha_{b\to c}))\SL(\alpha_{\overline{c+1}\to a})$,
due to Proposition~\ref{prop:generalized Leclerc}(a). However, the rightmost word cannot be standard Lyndon as its standard
bracketing was shown above to be $0$. Hence, a contradiction with $\SL(\alpha)=\ell$.

\medskip
\noindent
2) $\alpha=(r_1\delta+\alpha_{b\to c}) + ((r-r_1)\delta+\alpha_{\overline{c+1}\to a})$, where $1\in [b;c]$ and $1\in [c+1;a]$.

Let $\SL(\delta+\alpha_{b\to a})=\ell_1\ell_2$ be the costandard factorization.
We claim that one of length $n$ prefixes of the words $\SL(\delta+\alpha_{b\to c})$, $\SL(\delta+\alpha_{\overline{c+1}\to a})$
is $\leq \ell_1$. Indeed, assume that $\ell_1=\SL(\alpha_{b\to \overline{b-2}})$ (the case $\ell_1=\SL(\alpha_{\overline{a+2}\to a})$
is treated similarly). Then, the length $n$ prefix $\ell'_1$ of $\SL(\delta+\alpha_{b\to c})$ is $\ell_1$ or $\SL(\alpha_{\overline{c+2}\to c})$,
due to Lemma~\ref{lemma root with 1}. Note that $\SL(\alpha_{\overline{c+2}\to c})=\ell_1$ if $c=\overline{b-2}$, while the inequality
$\SL(\alpha_{\overline{c+2}\to c})<\SL(\alpha_{b\to \overline{c+1}})$ for $c\ne \overline{b-2}$ is proved similarly to~\eqref{eq:comb-alg}.
Combining the latter inequality with $\SL(\alpha_{b\to \overline{c+1}})\leq \SL(\alpha_{b\to \overline{b-2}})=\ell_1$
due to Lemma~\ref{Sasha's proposition}, we obtain $\ell'_1<\ell_1$ as claimed.
Henceforth, we assume that $\ell_1=\SL(\alpha_{b\to \overline{b-2}})$, leaving the other case to the reader.

First consider $0<r_1<r$. If the length $n$ prefix $\ell'_1$ of $\SL(\delta+\alpha_{b\to c})$ is $<\ell_1$, then the corresponding concatenation
$\ell$ is lexicographically smaller than the right-hand side of~\eqref{eq:general-five} for $k=r+1$. If $\ell'_1=\ell_1$, then we get a costandard
factorization $\SL(\delta+\alpha_{b\to c})=\ell_1\ell_3$ and so $\ell_{b\to c}(\delta)=\ell_{b\to a}(\delta)$.
If $c\ne \overline{b-2}$, then $\ell_3< \ell_{b\to c}(\delta)$ by~\eqref{eq:auxiliary} and so $\ell_3\, 1\leq \ell_{b\to c}(\delta)=\ell_{b\to a}(\delta)$.
Thus, we get the desired inequality:
\begin{equation*}
  \ell\leq
  \SL(r_1\delta+\alpha_{b\to c})\SL((r-r_1)\delta+\alpha_{(c+1)\to a}) <
  \ell_1\underbrace{\ell_{b \to a}(\delta)}_{r \; \mathrm{times}} \ell_2 \,.
\end{equation*}
If $c=\overline{b-2}$, then
  $\ell_3=\ell_{b\to \overline{b-2}}(\delta)=\ell_{b-1+sgn(i-(b-1))}(\delta)\geq \ell_{\overline{b-2}}(\delta)$,
with the last inequality due to Lemma~\ref{deltaorder}.
Let $\SL(\delta+\alpha_{\overline{b-1}\to a})=\ell_4 \ell_5$ be the costandard factorization. Then,
  $\ell_4\leq \SL(\alpha_{\overline{b-1}\to \overline{b-3}})<
   \SL(\alpha_{\overline{b-1}\to \overline{b-3}})\,\overline{b-2}=\ell_{\overline{b-2}}(\delta)$,
due to~\eqref{eq:ell1-inv}. Hence, the corresponding concatenation $\ell$ satisfies the desired inequality:
\begin{equation*}
  \ell \leq
  \ell_1 \underbrace{\ell_{b \to \overline{b-2}}(\delta)}_{r_1\, \rtim} \ell_4
  \underbrace{\ell_{ \overline{b-1}\to a}(\delta)}_{(r-r_1-1) \, \rtim}\ell_5<
  \ell_1\underbrace{\ell_{b \to a}(\delta)}_{r \, \rtim}\ell_2 \,.
\end{equation*}

For $r_1=r$, it suffices to consider the case when $\SL(\delta+\alpha_{b\to c})$ starts with $\ell_1$.
The case $c\ne \overline{b-2}$ is treated as above. If $c=\overline{b-2}$,
then $\ell_2=\SL(\alpha_{\overline{c+1}\to a})$ and $\ell_3=\ell_{b \to c}(\delta)=\ell_{b \to a}(\delta)$,
and thus the resulting concatenation $\ell\leq \ell_1\underbrace{\ell_{b \to a}(\delta)}_{r \; \mathrm{times}}\ell_2$.

Finally, if $r_1=0$ and $\SL(\delta+\alpha_{\overline{c+1}\to a})=\ell_4\ell_5$ is the costandard factorization, then
$\SL(\alpha_{b\to c})\leq \ell'_1\leq \ell_1$. If $c\neq \overline{b-2}$, then $\SL(\alpha_{b\to c})<\ell_1$, so that $\SL(\alpha_{b\to c})1<\ell_1$
and the former is not a prefix of the latter, implying $\ell<\ell_1$. If $c=\overline{b-2}$, then $\SL(\alpha_{b\to c})=\ell_1$ and
$\ell_4\leq \SL(\alpha_{\overline{b-1}\to \overline{b-3}})<\ell_{\overline{b-2}}(\delta)\leq \ell_{b\to a}(\delta)$ by above, so that:
\begin{equation*}
  \ell\leq \SL(\alpha_{b\to c})\SL(r\delta+\alpha_{\overline{c+1}\to a})\leq
  \ell_1 \ell_4 \underbrace{\ell_{ \overline{c+1}\to a}(\delta)}_{(r-1) \, \rtim}\ell_5<
  \ell_1 \ell_{b \to a}(\delta) < \ell_1 \underbrace{\ell_{b\to a}(\delta)}_{r \, \rtim} \ell_2 \,.
\end{equation*}

\medskip
\noindent
3) $\alpha=(r_1\delta) + ((r+1-r_1)\delta+\alpha_{b \to a})$.

If $a\ne \overline{b-2}$, then (using the induction hypothesis) the corresponding concatenated word $\ell$ is
  $\leq \ell_1 \underbrace{\ell_{b\to a}(\delta)}_{(r-r_1) \, \rtim} \ell_2\,
   \SL(\alpha_{\overline{c+1}\to \overline{c-1}})\underbrace{\ell_{c+sgn(i-c)}(\delta)}_{(r_1-1) \, \rtim}c$
if $r_1\leq r$, or
  $\leq \SL(\alpha_{b\to a})\SL(\alpha_{\overline{c+1}\to \overline{c-1}})\underbrace{\ell_{c+sgn(i-c)}(\delta)}_{r \, \rtim}c$
if $r_1=r+1$, for some $c\ne 1$.
Due to the inequalities $\SL(\alpha_{b\to a})<\ell_1<\ell_2<\ell_{b\to a}(\delta)$,
cf.~(\ref{eq:auxiliary},~\ref{eq:ba-k0}), we obtain ($\forall\, c\ne 1$):
\begin{equation*}
  \ell\leq
  \ell_1 \underbrace{\ell_{b\to a}(\delta)}_{(r-1) \, \rtim} \ell_2\, \SL(\alpha_{\overline{c+1}\to \overline{c-1}})c <
  \ell_1 \underbrace{\ell_{b\to a}(\delta)}_{r \, \rtim} \ell_2 \,.
\end{equation*}

Let us now treat the case $a=\overline{b-2}$, for which we utilize the non-commutativity of the corresponding bracketings.
We consider the cases $r_1=1$ and $r_1>1$ separately.

If $r_1=1$, then the corresponding concatenation $\ell$ is
  $\leq \ell_1 \underbrace{\ell_{b\to a}(\delta)}_{r \, \rtim} \ell_c(\delta)$,
where $\ell_2=\ell_{b\to a}(\delta)=\ell_{b-1+sgn(i-(b-1))}(\delta)$ by Remark~\ref{rem:explanation}.
Here, $\sb[\ell_c(\delta)]$ does not commute with $\sb[\SL(r\delta+\alpha_{b \to \overline{b-2}})]$,
which is equivalent to $[\sb[\ell_c(\delta)],E_{b,b-1}]\ne 0$. The latter guarantees that
$\ell_c(\delta)\leq \ell_{b\to a}(\delta)$, due to~\eqref{eq:el-word-bracketing} and Lemma~\ref{deltaorder}:

$\circ$
if $b \prec i$ then $c=b-1,b$ and $\ell_{c}(\delta)\leq \ell_b(\delta)=\ell_{b-1+sgn(i-(b-1))}(\delta)$;

$\circ$
if $b=i,i+1,i+2$, then $\ell_{b-1+sgn(i-(b-1))}(\delta)=\ell_i(\delta)\geq \ell_c(\delta)$;

$\circ$
if $b\succ i+2$, then $c=b-1,b-2$ and $\ell_{c}(\delta)\leq \ell_{b-2}(\delta)=\ell_{b-1+sgn(i-(b-1))}(\delta)$.
Hence, we derive the desired inequality:
\begin{equation*}
  \ell\leq
  \ell_1 \underbrace{\ell_{b\to a}(\delta)}_{r \, \rtim} \ell_c(\delta) \leq
  \ell_1 \underbrace{\ell_{b\to a}(\delta)}_{(r+1) \, \rtim}=
  \ell_1 \underbrace{\ell_{b\to a}(\delta)}_{r \, \rtim} \ell_2 \,.
\end{equation*}

For $r_1>1$, the argument is precisely the same and is based on the inequalities
$\SL(\alpha_{\overline{c+1}\to \overline{c-1}}) < \ell_c(\delta) \leq \ell_{b\to a}(\delta)$.
Here, the second inequality is proved as above, but using~\eqref{r+1 imaginary} instead of~\eqref{eq:el-word-bracketing}.

This completes the proof of  \eqref{eq:general-five}. In the particular case $r=1$,
this proves the formula $\SL(2\delta+\alpha_{b \to a}) = \ell_1 \ell_{b \to a}(\delta) \ell_2$
implicitly used in the statement of~\eqref{eq:general-five}.
\end{proof}


\medskip

\section{Properties of orders}\label{sec:orders}

To account for $\dim(\widehat{\fg}_{k\delta})=|I|$ in~\eqref{eq:aff-dim}, let us extend $\widehat{\Delta}^+$ to $\widehat{\Delta}^{+,\mathrm{ext}}$:
\begin{equation}\label{eq:extended-affine-roots}
  \widehat{\Delta}^{+,\mathrm{ext}}:=
  \widehat{\Delta}^{+,\mathrm{re}} \sqcup \big\{(k\delta,r) \, \big|\, k\geq 1, 1\leq r\leq |I|\big\}.
\end{equation}
We define $\SL((k\delta,r)):=\SL_r(k\delta)$ accordingly. Consider the order on $\widehat{\Delta}^{+,\mathrm{ext}}$
induced from the lexicographical order on $\aslaws$, cf.~\eqref{eqn:induces}:
\begin{equation}\label{eqn:affine-induces}
  \alpha < \beta \quad \Longleftrightarrow \quad \SL(\alpha) < \SL(\beta) \ \text{ lexicographically}.
\end{equation}
In this section, we investigate some properties of this order using Theorem~\ref{thm:sln-general}.

\begin{example}
The only case when $\widehat{\Delta}^{+,\mathrm{ext}}=\wDelta^+$ is the case of $\widehat{\fsl}_2$.
Using the formulas of Proposition~\ref{prop:sl2-case} (with the order $1<0$), we see that~\eqref{eqn:affine-induces} recovers
the usual order:
\begin{equation*}
  \alpha_1 < \alpha_1+\delta < \alpha_1 + 2\delta < \cdots < \cdots < 3\delta < 2\delta < \delta <
  \cdots < 2\delta+\alpha_0 < \delta+\alpha_0 < \alpha_0 \,.
\end{equation*}
\end{example}


\subsection{Important counterexample}\label{ssec:counterexample}
\

Unlike the orders on $\wDelta^{+,\mathrm{ext}}$ in the theory of affine quantum groups (\cite{B,KT}),
arising through the affine braid group action, the order~\eqref{eqn:affine-induces} does separate
imaginary roots. Explicitly, for type $A^{(1)}_n\ (n>1)$ and any order on $\wI$, one always~has:
\begin{equation*}
  (k_1\delta,n) < \alpha < (k_2\delta,1)
  \quad \mathrm{for\ some}\ \alpha\in \wDelta^{+,\mathrm{re}} \,,\,  k_1,k_2\geq 1.
\end{equation*}
It is thus natural to ask (motivated by Levendorsky-Soibelman convexity property):

\medskip
\noindent
\textbf{Question:} Is it true that we cannot have a pattern
\begin{equation*}
  (k_2\delta,n)<\beta_2<\beta_1<(k_1\delta,1) \quad \mathrm{with} \quad
  \beta_1,\beta_2\in \wDelta^{+,\mathrm{re}} \,,\, \beta_1+\beta_2=(k_1+k_2)\delta.
\end{equation*}

\noindent
The answer is actually negative, as shown by the following simplest counterexample.

\medskip
\noindent
\textbf{Counterexample:}
Consider the affine Lie algebra $\widehat{\fsl}_5$ with the standard order $1<2<3<4<0$ on $\wI$.
For $k,m>0$, set $\beta_1=k\delta+\alpha_4, \beta_2=m\delta+(\alpha_0+\alpha_1+\alpha_2+\alpha_3)$
and $k_1=1,k_2=k+m$. According to Theorem~\ref{thm:sln-standard}, we have:
\begin{equation*}
\begin{split}
  & \SL_1(\delta)=10432 \,,\qquad  \SL_4((k+m)\delta)=1234 \underbrace{10234}_{(k+m-1) \, \rtim} 0, \\
  & \SL(\beta_1)= \underbrace{10423}_{k \, \rtim} 4 \,,\qquad \SL(\beta_2)= 1023 \underbrace{10423}_{m \, \rtim}.
\end{split}
\end{equation*}
Thus, indeed $(k_2\delta,4)<\beta_2<\beta_1<(\delta,1)$ with respect to the order~\eqref{eqn:affine-induces}
on $\widehat{\Delta}^{+,\mathrm{ext}}$.


\subsection{Chain monotonicity in type $A^{(1)}_n$}\label{ssec:monotonicity}
\

For $\alpha\in \wDelta^{+,\mathrm{re}}$, define the \emph{chain} $\mathrm{Ch}_\alpha$ as
the sequence $\alpha,\alpha+\delta,\alpha+2\delta,\ldots \in \wDelta^{+,\mathrm{re}}$.

\begin{proposition}\label{prop:order-prop-1}
For any $\alpha\in \wDelta^{+,\mathrm{re}}$, the chain $\mathrm{Ch}_\alpha$ is monotonous:
\begin{equation*}
  \SL(\alpha) < \SL(\alpha+\delta) < \SL(\alpha+2\delta) < \cdots
  \quad \mathrm{or} \quad
  \SL(\alpha) > \SL(\alpha+\delta) > \SL(\alpha+2\delta) > \cdots
\end{equation*}
\end{proposition}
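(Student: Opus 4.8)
The plan is to read the comparison directly off the explicit formulas of Theorem~\ref{thm:sln-general}. Since monotonicity of $\mathrm{Ch}_\alpha$ is invariant under the $D_{n+1}$-symmetry of $\wI$ and $\wDelta^+$, I may assume the normalization~\eqref{eq:minimal letters}; the cases $n=1,2$ follow at once from the explicit lists in Proposition~\ref{prop:sl2-case} and Theorem~\ref{thm:sl3-case}, so I take $n\geq 3$. Writing $\alpha=\alpha_{a\to b}$, the chain is $\{\SL(k\delta+\alpha_{a\to b})\}_{k\geq 0}$, and I claim the dichotomy is governed entirely by whether $1\in[a\to\overline{b+1})$: the chain is strictly decreasing when $1\notin[a\to\overline{b+1})$ and strictly increasing when $1\in[a\to\overline{b+1})$. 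The two elementary facts I will use throughout are that each $\ell_c(\delta)$ begins with the globally smallest letter $1$, occurring in it exactly once (clear from \eqref{eq:el-word-explicit-1}--\eqref{eq:el-word-explicit-2}, as $\SL(\alpha_{\overline{c+1}\to\overline{c-1}})$ exhausts $\wI\setminus\{c\}\ni 1$); and that, whenever $1\notin[a\to\overline{b+1})$, every content segment appearing in the relevant formula avoids the letter $1$ (hence begins with a letter exceeding $1$ in the chosen order), while the single-letter segment $i$, being the second-smallest letter, is smaller than any segment that begins with a letter different from $1$ and $i$.

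For the decreasing case the subcases \eqref{eq:general-three}, \eqref{eq:general-two}, and \eqref{genreal four 4} all have the shape $\SL(k\delta+\alpha)=\underbrace{L}_{k}T$ with $L\in\{\ell_{\overline{b+1}}(\delta),\ell_{\overline{a-1}}(\delta),\ell_i(\delta)\}$ and $T$ a finite-type tail not containing $1$. Consecutive words then share the prefix $\underbrace{L}_{k}$, so the comparison collapses to $T$ versus $LT$; as $L$ opens with $1$ and $T$ with a larger letter, one gets $LT<T$ and hence $\SL((k+1)\delta+\alpha)<\SL(k\delta+\alpha)$ for all $k\geq 0$. The remaining decreasing subcases \eqref{eq:general-four}, \eqref{genreal four 2}, \eqref{genreal four 3} are periodic (period $3$, resp.\ $2$), and I will check each residue class of $k$ separately: I locate the longest common prefix of $\SL(k\delta+\alpha)$ and $\SL((k+1)\delta+\alpha)$ and verify that at the first point of difference the $(k+1)$-word carries the smaller letter. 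In every residue this is forced by one of the two facts above — either a fresh copy of $L$ (hence a $1$) is met in the $(k+1)$-word opposite a letter $>1$, or the lone letter $i$ is pulled in front of a segment whose first letter exceeds $i$.

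For the increasing case $1\in[a\to\overline{b+1})$ I use \eqref{eq:general-five}: for $k\geq 1$ one has $\SL(k\delta+\alpha_{b\to a})=\ell_1\,\underbrace{\ell_{b\to a}(\delta)}_{k-1}\,\ell_2$, so consecutive words share the prefix $\ell_1\underbrace{\ell_{b\to a}(\delta)}_{k-1}$ and the comparison reduces to $\ell_2$ versus $\ell_{b\to a}(\delta)\,\ell_2$. The inequality $\ell_2<\ell_{b\to a}(\delta)$ from \eqref{eq:auxiliary} gives $\ell_2<\ell_{b\to a}(\delta)\,\ell_2$ (whether or not $\ell_2$ is a prefix of $\ell_{b\to a}(\delta)$, since in the prefix case the shorter word is smaller), whence $\SL(k\delta+\alpha)<\SL((k+1)\delta+\alpha)$; the initial step $k=0\to k=1$ is exactly \eqref{eq:ba-k0}.

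I expect the genuine obstacle to be the bookkeeping in the period-$3$ formula \eqref{eq:general-four}. Across the residues of $k\bmod 3$ the three segments $i$, $\overline{i+1}\dots b$, and $\overline{i-1}\dots a$ are redistributed among the blocks with unequal numbers of $\ell_i(\delta)$-copies — the Lyndon condition forcing the leading block to be the longest — so the alignment of the $\ell_i(\delta)$-blocks in $\SL(k\delta+\alpha)$ and $\SL((k+1)\delta+\alpha)$ must be tracked carefully to pin down the first differing position. Once the alignment is settled, the sign of the comparison is in each case decided by the same two elementary inequalities, so the difficulty is organizational rather than conceptual.
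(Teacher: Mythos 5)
Your proposal is correct and takes essentially the same route as the paper's proof: normalize via the $D_{n+1}$-symmetry to~\eqref{eq:minimal letters}, then compare consecutive chain members case by case in the explicit formulas of Theorem~\ref{thm:sln-general}, with the decreasing cases ($1\notin[a\to\overline{b+1})$) decided by the leading letter $1$ of the $\ell_c(\delta)$-blocks against tails avoiding $1$ (and $i$ being second-smallest in the periodic formulas~\eqref{eq:general-four}--\eqref{genreal four 4}, whose per-residue checks indeed go through as you anticipate), and the increasing case decided by~\eqref{eq:auxiliary} together with~\eqref{eq:ba-k0}. The only cosmetic slip is that at $k=0$ in the decreasing cases the word is $\SL(\alpha_{a\to b})$ rather than the tail $T$, but since it likewise avoids the letter $1$ the comparison is unaffected, which is exactly how the paper phrases that step.
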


\begin{proof}
Without loss of generality, we can assume that~\eqref{eq:minimal letters} holds, so that the formulas
of Theorem~\ref{thm:sln-general} apply. The proof follows by a simple case-by-case analysis:

\medskip
\noindent
$\bullet$ $\alpha=\alpha_{a\to b}$ with $i \prec a \preceq b \preceq 0$.

According to~\eqref{eq:general-two}, we have
  $\SL(k\delta+\alpha_{a\to b}) = \underbrace{\ell_{\overline{a-1}}(\delta)}_{k \, \rtim} a\, \overline{a+1} \dots b$
for all $k\geq 1$. As $a\, \overline{a+1} \dots b$ starts with a letter $a$ which is
larger than $1$, the first letter of $\ell_{\overline{a-1}}(\delta)$, we obtain
$\SL(k\delta+\alpha_{a\to b})>\SL((k+1)\delta+\alpha_{a\to b})$ for any $k\geq 1$.
In the remaining case $k=0$, we also have $\SL(\alpha_{a\to b})>\SL(\delta+\alpha_{a\to b})$,
as $\SL(\alpha_{a\to b})$ starts with a letter $\min\{a,\ldots, b\}$ which is larger than $1$,
the first letter of $\SL(\delta+\alpha_{a\to b})$.

\medskip
\noindent
$\bullet$ $\alpha=\alpha_{a\to b}$ with $1 \prec a \preceq b \prec i$.

The proof of $\SL(k\delta+\alpha_{a\to b})>\SL((k+1)\delta+\alpha_{a\to b})$ for any $k\geq 0$ is
exactly the same as above, with $\ell_{b+1}(\delta)$ used instead of $\ell_{\overline{a-1}}(\delta)$.

\medskip
\noindent
$\bullet$ $\alpha=\alpha_{a\to b}$ with $1 \prec a \prec i \prec b$.

Combining formula~\eqref{eq:general-four} with the inequalities
$\overline{i\pm 1}>i>1=$ first letter of $\ell_i(\delta)$, we obtain
$\SL(k\delta+\alpha_{a\to b})>\SL((k+1)\delta+\alpha_{a\to b})$ for any $k\geq 1$.
In the remaining case $k=0$, we also have $\SL(\alpha_{a\to b})>\SL(\delta+\alpha_{a\to b})$,
as $1\notin [a;b]$.

\medskip
\noindent
$\bullet$ $\alpha=\alpha_{a\to b}$ with $a=i$ or $b=i$ and $1\notin [a;b]$.

The proof of $\SL(k\delta+\alpha_{a\to b})>\SL((k+1)\delta+\alpha_{a\to b})$ for any $k\geq 0$
is exactly the same as above, where we now use one of~\eqref{genreal four 2}--\eqref{genreal four 4}
instead of~\eqref{eq:general-four}.

\medskip
\noindent
$\bullet$ $\alpha=\alpha_{b\to a}$ with $1\in [b;a]$.

According to~\eqref{eq:general-five}, we have
  $\SL(k\delta+\alpha_{b\to a})=\ell_1 \underbrace{\ell_{b\to a}(\delta)}_{(k-1) \, \rtim} \ell_2$
for all $k\geq 1$. Here, we have $\ell_2\leq \ell_{b\to a}(\delta)$, due to~\eqref{eq:auxiliary},
so that $\ell_2<\ell_{b\to a}(\delta)\ell_2$. Thus, we obtain
$\SL(k\delta+\alpha_{b\to a})<\SL((k+1)\delta+\alpha_{b\to a})$ for any $k\geq 1$.
In the remaining case $k=0$, we also have $\SL(\alpha_{b\to a})<\SL(\delta+\alpha_{b\to a})$,
due to~\eqref{eq:ba-k0}.
\end{proof}

\begin{remark}\label{rem:monot-increas}
It follows from the proof that the chain $\mathrm{Ch}_\alpha$ monotonously increases if
$\alpha=k\delta+\alpha_{a\to b}$ with $\min\{\wI\}\in [a;b]$, and monotonously decreases~otherwise.
\end{remark}

\begin{remark}
For any $k\geq 1$ and $c\ne 1$, we also have
  $\SL(\alpha_{\overline{c+1}\to \overline{c-1}})\underbrace{\ell_{c+sgn(i-c)}(\delta)}_{(k-1) \, \rtim}c>
   \SL(\alpha_{\overline{c+1}\to \overline{c-1}})\underbrace{\ell_{c+sgn(i-c)}(\delta)}_{k \, \rtim}c$,
cf.~\eqref{eq:general-one}. Since the order among length $n$ words
$\{\SL(\alpha_{\overline{c+1}\to \overline{c-1}}) \,|\, c\ne 1\}$ determines the order among the $n$ words
in the right-hand side of~\eqref{eq:general-one} for any $k$, we also see that $\{\SL(k\delta,r)\}_{k\geq 1}$
monotonously decreases:
\begin{equation*}
  \SL(\delta,r)>\SL(2\delta,r)>\SL(3\delta,r)>\cdots
  \qquad \forall\, 1\leq r\leq n.
\end{equation*}
\end{remark}


\subsection{Pre-convexity in type $A^{(1)}_n$}\label{ssec:preconvexity}
\

Motivated by Definition~\ref{def:convex}, we shall call an order $<$ on $\wDelta^{+,\mathrm{re}}$
\emph{pre-convex} if
\begin{equation}\label{eq:pre-convex}
  \alpha<\alpha+\beta<\beta \quad \mathrm{or} \quad \beta<\alpha+\beta<\alpha
  \qquad \forall\ \alpha,\beta,\alpha+\beta\in \wDelta^{+,\mathrm{re}}.
\end{equation}

\begin{proposition}\label{prop:order-prop-2}
The restriction of~\eqref{eqn:affine-induces} to $\wDelta^{+,\mathrm{re}}$ is pre-convex.
\end{proposition}

\begin{proof}
Without loss of generality, we can assume that~\eqref{eq:minimal letters} holds, so that the formulas
of Theorem~\ref{thm:sln-general} apply. The proof follows by a direct case-by-case analysis:

\medskip
\noindent
$\bullet$ $\alpha=\alpha_{a\to b}+k\delta$, $\beta=\alpha_{(b+1)\to c}+r\delta$ for
$1 \prec a \preceq b \prec c \prec i$.

$\circ$ Case 1: $k,r>0$.
In this case, we have
  $\SL(\alpha)=\underbrace{\ell_{b+1}(\delta)}_{k \, \rtim} b (b-1) \dots a$,
  $\SL(\beta)=\underbrace{\ell_{c+1}(\delta)}_{r \, \rtim} c(c-1) \dots (b+1)$,
  $\SL(\alpha+\beta)=\underbrace{\ell_{c+1}(\delta)}_{(k+r) \, \rtim} c(c-1) \dots a$.
The inequality $\SL(\alpha)<\SL(\alpha+\beta)$ is a consequence of $\ell_{c+1}(\delta)>\ell_{b+1}(\delta)$
(due to Lemma \ref{deltaorder}), while the inequality $\SL(\alpha+\beta)<\SL(\beta)$ is obvious as
$\ell_{c+1}(\delta)$ starts with $1<c$.

$\circ$ Case 2: $k=0,r>0$.
In this case, we have
  $\SL(\beta)=\underbrace{\ell_{c+1}(\delta)}_{r \, \rtim} c(c-1) \dots (b+1)$,
  $\SL(\alpha+\beta)=\underbrace{\ell_{c+1}(\delta)}_{r \, \rtim} c(c-1) \dots a$,
while $\SL(\alpha)$ starts with a letter $>1$.
Therefore, we immediately get $\SL(\alpha)>\SL(\alpha+\beta)>\SL(\beta)$.

$\circ$ Case 3: $k>0,r=0$.
In this case, we have
  $\SL(\alpha)=\underbrace{\ell_{b+1}(\delta)}_{k \, \rtim} b (b-1) \dots a$,
  $\SL(\alpha+\beta)=\underbrace{\ell_{c+1}(\delta)}_{k \, \rtim} c(c-1) \dots a$,
while $\SL(\beta)$ starts with a letter $>1$. Evoking the inequality $\ell_{c+1}(\delta)>\ell_{b+1}(\delta)$,
we immediately get $\SL(\alpha)<\SL(\alpha+\beta)<\SL(\beta)$.

$\circ$ Case 4: $k=r=0$.
In this case, $\alpha,\beta,\alpha+\beta\in \Delta^+$, hence the claim follows from Proposition~\ref{prop:fin.convex}
(a priori we do not know which of the two possible orders holds).

\medskip
\noindent
$\bullet$ $\alpha=\alpha_{a\to b}+k\delta$, $\beta=\alpha_{\overline{b+1}\to c}+r\delta$ for
$i \prec a \preceq b \prec c \preceq 0$.

$\circ$ Case 1: $k,r>0$.
In this case, we have
  $\SL(\alpha)=\underbrace{\ell_{a-1}(\delta)}_{k \, \rtim} a \, \overline{a+1} \dots b$,
  $\SL(\beta)=\underbrace{\ell_{b}(\delta)}_{r \, \rtim} \overline{b+1}\, \overline{b+2} \dots c$,
  $\SL(\alpha+\beta)=\underbrace{\ell_{a-1}(\delta)}_{(k+r) \, \rtim} a \, \overline{a+1} \dots c$.
The inequality $\SL(\beta)<\SL(\alpha+\beta)$ is a consequence of $\ell_{a-1}(\delta)>\ell_{b}(\delta)$
(due to Lemma \ref{deltaorder}), while the inequality $\SL(\alpha+\beta)<\SL(\alpha)$ is obvious as
$\ell_{a-1}(\delta)$ starts with $1$ which is $<a$.

$\circ$ Case 2: $k=0,r>0$.
In this case, we have
  $\SL(\beta)=\underbrace{\ell_{b}(\delta)}_{r \, \rtim} \overline{b+1}\, \overline{b+2} \dots c$,
  $\SL(\alpha+\beta)=\underbrace{\ell_{a-1}(\delta)}_{r \, \rtim} a \, \overline{a+1} \dots c$,
while $\SL(\alpha)$ starts with a letter $>1$. Evoking the inequality $\ell_{a-1}(\delta)>\ell_{b}(\delta)$,
we immediately get $\SL(\beta)<\SL(\alpha+\beta)<\SL(\alpha)$.

$\circ$ Case 3: $k>0,r=0$.
In this case, we have
  $\SL(\alpha)=\underbrace{\ell_{a-1}(\delta)}_{k \, \rtim} a \, \overline{a+1} \dots b$,
  $\SL(\alpha+\beta)=\underbrace{\ell_{a-1}(\delta)}_{k \, \rtim} a \, \overline{a+1} \dots c$,
while $\SL(\beta)$ starts with a letter $>1$. Therefore, we get
  $$\SL(\alpha)<\SL(\alpha+\beta)<\SL(\beta) .$$

$\circ$ Case 4: $k=r=0$.
In this case, the claim follows from Proposition~\ref{prop:fin.convex} again.

\medskip
\noindent
$\bullet$ $\alpha=\alpha_{a\to (i-1)}+k\delta$, $\beta=\alpha_{i}+r\delta$ for $1 \prec a \prec i$.

$\circ$ Case 1: $k>0, r\geq 0$.
In this case, we have
  $\SL(\alpha)=\underbrace{\ell_{i}(\delta)}_{k \, \rtim} \overline{i-1}\, \overline{i-2} \dots a$,
  $\SL(\alpha+\beta)=
   \begin{cases}
     \underbrace{\ell_i(\delta)}_{\frac{k+r}{2} \, \rtim} i \underbrace{\ell_i(\delta)}_{\frac{k+r}{2} \, \rtim} \overline{i-1}\dots a
       & \mathrm{if}\ 2\mid (k+r) \\
     \underbrace{\ell_i(\delta)}_{\frac{k+r+1}{2} \, \rtim} \overline{i-1}\dots a
     \underbrace{\ell_i(\delta)}_{\frac{k+r-1}{2} \, \rtim} i
       & \mathrm{if}\ 2\nmid (k+r)
   \end{cases}$,
  and $\SL(\beta)=\underbrace{\ell_{i}(\delta)}_{r \, \rtim} i$.

If $2\mid (k+r)$ and $k>\frac{k+r}{2}>r$, then clearly $\SL(\alpha)<\SL(\alpha+\beta)<\SL(\beta)$.
If $2\mid (k+r)$ and $k\leq \frac{k+r}{2}\leq r$, then clearly $\SL(\alpha)>\SL(\alpha+\beta)>\SL(\beta)$.

If $2\nmid (k+r)$ and $k\geq \frac{k+r+1}{2}>r$, then clearly $\SL(\alpha)<\SL(\alpha+\beta)<\SL(\beta)$.
If $2\nmid (k+r)$ and $k<\frac{k+r+1}{2}\leq r$, then clearly $\SL(\alpha)>\SL(\alpha+\beta)>\SL(\beta)$.

$\circ$ Case 2: $k=0, r>0$.
In this case,
  $\SL(\alpha)$ starts with a letter $>1$, $\SL(\beta)=\underbrace{\ell_{i}(\delta)}_{r \, \rtim} i$,
  $\SL(\alpha+\beta)=
   \begin{cases}
     \underbrace{\ell_i(\delta)}_{\frac{r}{2} \, \rtim} i \underbrace{\ell_i(\delta)}_{\frac{r}{2} \, \rtim} \overline{i-1}\dots a
       & \mathrm{if}\ 2\mid r \\
     \underbrace{\ell_i(\delta)}_{\frac{r+1}{2} \, \rtim} \overline{i-1}\dots a
     \underbrace{\ell_i(\delta)}_{\frac{r-1}{2} \, \rtim} i
       & \mathrm{if}\ 2\nmid r
   \end{cases}$.
Therefore, we immediately get $\SL(\alpha)>\SL(\alpha+\beta)>\SL(\beta)$.

$\circ$ Case 3: $k=r=0$.
In this case, the claim follows from Proposition~\ref{prop:fin.convex} again.
In fact, we get $\SL(\alpha)>\SL(\alpha+\beta)>\SL(\beta)$ since $\SL(\alpha)>\SL(\beta)$ (as $i<a,\ldots,i-1$).

\medskip
\noindent
$\bullet$ $\alpha=\alpha_{a\to b}+k\delta$, $\beta=\alpha_{(b+1)\to i}+r\delta$ for
$1 \prec a \preceq b \prec i-1$.

$\circ$ Case 1: $k,r>0$.
Combining~(\ref{eq:general-three},~\ref{genreal four 3}) and Lemma \ref{deltaorder}, we obtain:
  $$\SL(\alpha)=
    \underbrace{\ell_{b+1}(\delta)}_{k \, \mathrm{times}} b\, \overline{b-1}\dots a<
    \ell_i(\delta)<\SL(\beta) \,,\, \SL(\alpha+\beta) \,.$$
It thus remains to prove that $\SL(\alpha+\beta)<\SL(\beta)$. This is obvious unless $k=1$ and $2 \nmid r$,
as $\SL(\alpha+\beta)$ contains more copies of $\ell_i(\delta)$'s in the beginning than $\SL(\beta)$,
due to~\eqref{genreal four 3} and $\lceil \frac{k+r}{2} \rceil > \lceil \frac{r}{2} \rceil$.
Meanwhile, for $k=1$ and $2 \nmid r$ we have:
  $$\SL(\alpha+\beta)=\underbrace{\ell_i(\delta)}_{\frac{r+1}{2}} i
    \underbrace{\ell_i(\delta)}_{\frac{r+1}{2}} \overline{i-1}\dots a <
    \underbrace{\ell_i(\delta)}_{\frac{r+1}{2}} \overline{i-1}\dots
    \overline{b+1}\underbrace{\ell_i(\delta)}_{\frac{r-1}{2}}i = \SL(\beta) \,.$$

$\circ$ Case 2: $k=0, r>0$.
We have $\SL(\alpha_{(b+1)\to i}+r\delta) \SL(\alpha_{a \to b})\leq \SL(\alpha_{a\to i}+r\delta)$,
due to Proposition~\ref{prop:generalized Leclerc}. Therefore:
  $\SL(\beta) < \SL(\beta) \SL(\alpha)\leq \SL(\alpha_{a\to i}+r\delta)=\SL(\alpha+\beta)$.
On the other hand, $\SL(\alpha)$ starts with $\min\{a,\ldots,b\}$ which is $>1=$ the first letter
of $\SL(\alpha+\beta)$. Hence, $\SL(\beta)<\SL(\alpha+\beta)<\SL(\alpha)$.

$\circ$ Case 3: $r=0, k>0$.
In this case, we have $\SL(\alpha)<\SL(\alpha+\beta)<\SL(\beta)$, due to
$\ell_{b+1}(\delta)<\ell_i(\delta)$ (by Lemma~\ref{deltaorder}) and $1<i$.

$\circ$ Case 4: $k=r=0$.
In this case, the claim follows from Proposition~\ref{prop:fin.convex} again.
In fact, we get $\SL(\alpha)>\SL(\alpha+\beta)>\SL(\beta)$ since $\SL(\alpha)>\SL(\beta)$ (as $i<a,\ldots,i-1$).

\medskip
\noindent
$\bullet$ $\alpha=\alpha_{a\to b}+k\delta$, $\beta=\alpha_{(b+1)\to c}+r\delta$ for
$1 \prec a \preceq b \prec i-1$ and $i \prec c \preceq 0$.

The proof is absolutely analogous to the previous case, but we should now look at $r$ mod $3$
(rather than $r$ mod $2$) and use formula~\eqref{eq:general-four} instead of~\eqref{genreal four 3}.

\medskip
\noindent
$\bullet$ $\alpha=\alpha_{a\to (i-1)}+k\delta$, $\beta=\alpha_{i\to b}+r\delta$ for
$1 \prec a \prec i \prec b \preceq 0$.

$\circ$ Case 1: $k,r>0$.
Let us compare the multiplicity of the word $\ell_i(\delta)$ in the beginning of our words:
it is $k$ for $\SL(\alpha)$, $\lceil \frac{r}{2} \rceil$ for $\SL(\beta)$, and $\lceil \frac{k+r}{3} \rceil$
for $\SL(\alpha+\beta)$. If $r=2k+3$ or $r>2k+4$, then $k<\lceil \frac{k+r}{3} \rceil<\lceil \frac{r}{2} \rceil$
(as $\lceil \frac{k+r}{3} \rceil \leq \frac{k+r+2}{3}<\frac{r}{2}\leq \lceil \frac{r}{2} \rceil$ for $r>2k+4$),
and so $\SL(\beta)<\SL(\alpha+\beta)<\SL(\alpha)$.
If $r<2k-3$, then likewise $k>\lceil \frac{k+r}{3} \rceil>\lceil \frac{r}{2} \rceil$
(as $\lceil \frac{k+r}{3} \rceil \geq  \frac{k+r}{3}>\frac{r+1}{2} \geq \lceil \frac{r}{2} \rceil$),
and so $\SL(\alpha)<\SL(\alpha+\beta)<\SL(\beta)$.
Thus, it remains to consider $r\in\{2k-3,2k-2,2k-1,2k,2k+1,2k+2,2k+4\}$. Let us illustrate the argument
for $r=2k-2$, while the other six cases are treated completely analogously. For $r=2k-2$,
$\lceil \frac{r}{2} \rceil<k=\lceil \frac{k+r}{3} \rceil$, and so it suffices to prove that
$\SL(\alpha)<\SL(\alpha+\beta)$. Comparing formulas~(\ref{eq:general-three},~\ref{eq:general-four}),
we see that either $\SL(\alpha)$ is a proper prefix of $\SL(\alpha+\beta)$ if $\overline{i-1}>\overline{i+1}$,
or its first letter after $k$ copies of $\ell_i(\delta)$ is smaller than that of $\SL(\alpha+\beta)$
if $\overline{i-1}<\overline{i+1}$. Thus $\SL(\alpha)<\SL(\alpha+\beta)$.

$\circ$ Case 2: $k=0, r>0$. Comparing the first letters, we get $\SL(\alpha)>\SL(\alpha+\beta)$.
It thus remains to prove $\SL(\alpha+\beta)>\SL(\beta)$. For $r=3$ or $r>4$, this follows from
$\lceil \frac{r}{2} \rceil > \lceil \frac{r}{3} \rceil$. The cases $r\in \{1,2,4\}$ are treated
similarly to $r=2k-2$ in Case 1.

$\circ$ Case 3: $k>0,r=0$. Comparing the first letters, we get $\SL(\beta)>\SL(\alpha+\beta)$,
while $\SL(\alpha+\beta)>\SL(\alpha)$ is verified alike $\SL(\alpha+\beta)>\SL(\beta)$ in Case 2.

$\circ$ Case 4: $k=r=0$.
In this case, the claim follows from Proposition~\ref{prop:fin.convex} again.
In fact, we get $\SL(\alpha)>\SL(\alpha+\beta)>\SL(\beta)$ since $\SL(\alpha)>\SL(\beta)$ (as $i<a,\ldots,i-1$).

\medskip

The next four cases are absolutely similar to the previous four:

\medskip
\noindent
$\bullet$ $\alpha=\alpha_{i}+k\delta$, $\beta=\alpha_{\overline{i+1}\to b}+r\delta$ for
$i \prec b \preceq 0$.

\medskip
\noindent
$\bullet$ $\alpha=\alpha_{i\to b}+k\delta$, $\beta=\alpha_{\overline{b+1}\to c}+r\delta$ for
$i \prec b \prec c \preceq 0$.

\medskip
\noindent
$\bullet$ $\alpha=\alpha_{a\to b}+k\delta$, $\beta=\alpha_{\overline{b+1}\to c}+r\delta$ for
$1 \prec a \prec i \prec b \prec c \preceq 0$.

\medskip
\noindent
$\bullet$ $\alpha=\alpha_{a\to i}+k\delta$, $\beta=\alpha_{\overline{i+1}\to b}+r\delta$ for
$1 \prec a \prec i \prec b \preceq 0$.

\medskip

Finally, let us treat the remaining three cases that utilize~\eqref{eq:general-five} and its proof.

\medskip
\noindent
$\bullet$
$\alpha=(\alpha_{a\to b}+k\delta)$, $\beta=(\alpha_{\overline{b+1}\to c}+r\delta)$
for $1\in [a;b]$ and $1 \notin [\overline{b+1};c]$.

\noindent
$\circ$ Case 1: $c\in [\overline{b+1};\overline{a-2}]$.

If $k>0,r>0$, then $\SL(\alpha)<\SL(\alpha)\SL(\beta)\leq \SL(\alpha+\beta)$ with the second inequality proved
in case 1) of our proof of~\eqref{eq:general-five}. Thus, it remains to prove $\SL(\alpha+\beta)<\SL(\beta)$.
By Corollary~\ref{cor:ell1-invariant}, $\SL(\alpha+\beta)$ starts with
  $\min\{\SL(\alpha_{a\to \overline{a-2}})\,1,\SL(\alpha_{\overline{c+2}\to c})\,1\}<
   \SL(\alpha_{\overline{c+2}\to c})\overline{c+1}=\ell_{\overline{c+1}}(\delta)$.
On the other hand, $\SL(\beta)$ starts with $\ell_i(\delta)\geq \ell_{\overline{c+1}}(\delta)$ if $i \in [(b+1)\to c)$,
with $\ell_{\overline{c+1}}(\delta)$ if $1\prec b+1 \preceq c \prec i$, with
$\ell_{b}(\delta)>\ell_{\overline{c+1}}(\delta)$ for $i\prec b+1 \preceq c$ (by Lemma~\ref{deltaorder}).
This completes the proof of $\SL(\alpha)<\SL(\alpha+\beta)<\SL(\beta)$ for $k,r>0$.
The inequalities are similar when $k\ne r=0$ or $r\neq k=0$.
Finally, for $k=r=0$ the claim follows from Proposition~\ref{prop:fin.convex}.
In fact, since $1$ is the minimal element of $\wI$, we get $\SL(\alpha)<\SL(\alpha+\beta)<\SL(\beta)$.

\noindent
$\circ$ Case 2: $c\in [a;0]$.

Note that $\SL(\alpha_{\overline{b+1}\to c}+r\delta)>\SL(\alpha_{\overline{b+1}\to c}+(r+k+1)\delta)$ by Remark~\ref{rem:monot-increas}.
We also have $\SL(\alpha_{\overline{b+1}\to c}+(r+k+1)\delta)>\SL(\alpha_{a\to c}+(r+k+1)\delta)$, due to an already established
pre-convexity for roots $(\alpha_{a\to c}+(r+k+1)\delta)+\alpha_{\overline{b+1}\to \overline{a-1}}=\alpha_{\overline{b+1}\to c}+(r+k+1)\delta$.
Combining the two inequalities above, we obtain $\SL(\alpha+\beta)<\SL(\beta)$.
Evoking Corollary~\ref{cor:ell1-invariant}, we see that $\SL(\alpha)$ starts with
$\ell_1 1\leq \SL(\alpha_{\overline{a-2}\to a}) 1<\ell_{\overline{a-1}}(\delta)$, so that $\SL(\alpha)<\ell_{\overline{a-1}}(\delta)$.
On the other hand, $\SL(\alpha+\beta)$ is lexicographically larger than $\ell_{\overline{a-1}}(\delta)$, due to explicit formulas of
Theorem~\ref{thm:sln-general} and Lemma~\ref{deltaorder}. Combining these inequalities, we obtain $\SL(\alpha)<\SL(\alpha+\beta)$.

\medskip
\noindent
$\bullet$
$\alpha=(\alpha_{a\to b}+k\delta)$, $\beta=(\alpha_{\overline{b+1}\to c}+r\delta)$
for $1\notin [a;b]$ and $1\in [\overline{b+1};c]$.

The proof in this case is completely analogous to the previous one.

\medskip
\noindent
$\bullet$
$\alpha=\alpha_{a\to b}+k\delta$, $\beta=\alpha_{\overline{b+1}\to c}+r\delta$
for $1\in [a;b]$ and $1 \in [\overline{b+1};c]$.

According to~\eqref{eq:ba-k0}, we have:
$\SL(\alpha_{a\to b}+k\delta)\geq \SL(\alpha_{a\to b})$ and
$\SL(\alpha_{\overline{b+1}\to c}+r\delta)\geq \SL(\alpha_{\overline{b+1}\to c})$ for $k,r\geq 0$.
Thus, $\SL(\alpha_{a\to b}+k\delta)\geq \SL(\alpha_{a\to \overline{c+1}})$ and
$\SL(\alpha_{\overline{b+1}\to c}+r\delta)\geq \SL(\alpha_{\overline{a-1}\to c})$ by Lemma~\ref{deltaorder}
as $1\in [a;\overline{c+1}] \subset [a;b]$ and $1\in [\overline{a-1};c] \subseteq [\overline{b+1};c]$.
Evoking the proof of Lemma \ref{lemma root with 1}, see~\eqref{eq:comb-alg}, we conclude that one of the
words $\SL(\alpha_{\overline{a-1}\to c})$ and $\SL(\alpha_{a\to \overline{c+1}})$ is $>$
$\SL(\alpha_{a\to c}+(k+r+1)\delta)$. This implies that $\max\{\SL(\alpha),\SL(\beta)\}>\SL(\alpha+\beta)$.
The other inequality is obvious: $\min\{\SL(\alpha),\SL(\beta)\}<\SL(\alpha+\beta)$,
cf.~our treatment of case 2) in the proof of~\eqref{eq:general-five}.
This competes the proof for any $k,r\geq 0$.
\end{proof}


\medskip

\appendix
\section{Computer code}\label{sec:appendix}

The generalized Leclerc's algorithm of Proposition~\ref{prop:generalized Leclerc} is easy to program.
This allows one to find $\aslaws$ for any affine type (which is especially useful for exceptional types
$F_4$ and $E_{6,7,8}$) and any order on the alphabet $\wI$, arguing by induction on the height of
an affine root. Here are the clickable codes:
\begin{itemize}[leftmargin=0.7cm]

\item
\href{https://replit.com/@IeghorAvdieiev2/Real-roots?v=1}{Python Code 1}

\item
\href{https://replit.com/@IeghorAvdieiev2/Imaginary-roots?v=1}{Python Code 2}

\end{itemize}
The first code computes $\SL(\alpha)$ for $\alpha\in \wDelta^{+,\mathrm{re}}$ with
$kh<\hgt(\alpha)<(k+1)h$ (here, $h=\hgt(\delta)$ is the Coxeter number of $\fg$) using the
algorithm of Proposition~\ref{prop:generalized Leclerc}(a). The second code evaluates
$\{\SL_r((k+1)\delta))\}_{r=1}^{|I|}$ using the algorithm of Proposition~\ref{prop:generalized Leclerc}(b).

\begin{remark}
To code the algorithm of Proposition~\ref{prop:generalized Leclerc} it is key to define a function that evaluates
standard bracketing of $\aslaws$ and a function that checks bracketings for linear independence. The code works
inductively and proceeds block-wise evaluating $\SL_*(\alpha)$ for $kh<\hgt(\alpha)\leq (k+1)h$ at each step.
\end{remark}

Assuming we already have
all $\SL_*(\alpha)$ for $\hgt(\alpha)\leq kh$, the code lists all possible decompositions $\alpha=\gamma_1+\gamma_2$.
For $\alpha\in \wDelta^{+,\mathrm{re}}$ the code counts bracketings and finds the biggest word with the nonzero bracketing.
For $\alpha=(k+1)\delta$, the code finds the biggest $|I|=rk(\mathfrak{g})$ words with linear independent bracketings.


\medskip

\end{document}